\tikzset{->-/.style={decoration={  markings,  mark=at position #1 with
    {\arrow{>}}},postaction={decorate}}}
\tikzset{-<-/.style={decoration={  markings,  mark=at position #1 with
    {\arrow{<}}},postaction={decorate}}}
\theoremstyle{plain}
\newtheorem{theorem}{Theorem}[section]
\newtheorem{lemma}[theorem]{Lemma}
\newtheorem{proposition}[theorem]{Proposition}
\newtheorem{corollary}[theorem]{Corollary}
\theoremstyle{definition}
\newtheorem{definition}[theorem]{Definition}
\newtheorem{remark}[theorem]{Remark}
\newtheorem{example}[theorem]{Example}
\newtheorem{notation}[theorem]{Notation}
\newtheorem{notation-remark}[theorem]{Notation-Remark}
\numberwithin{equation}{section}
\newcommand{\Hom}{\operatorname{Hom}}
\newcommand{\End}{\operatorname{End}}
\newcommand{\thick}{\operatorname{thick}}
\newcommand{\add}{\operatorname{add}}
\newcommand{\proj}{\operatorname{proj}}
\newcommand{\inj}{\operatorname{inj}}
\renewcommand{\ker}{\operatorname{ker}}
\newcommand{\coker}{\operatorname{coker}}
\renewcommand{\mod}{\operatorname{mod}}
\newcommand{\Fac}{\operatorname{Fac}}
\newcommand{\Sub}{\operatorname{Sub}}
\newcommand{\emod}[2]{#1\text{-}\mod #2}
\newcommand{\uemod}[2]{\underline{#1\text{-}\mod} #2}
\newcommand{\oemod}[2]{\overline{#1\text{-}\mod} #2}
\newcommand{\AR}[1]{\tau_{[#1]}}
\def\k{\mathbf{k}}
\def\C{\mathcal{C}}
\def\D{\mathcal{D}}
\def\T{\mathcal{T}}
\def\F{\mathcal{F}}
\def\H{\mathcal{H}}
\def\X{\mathcal{X}}
\def\Y{\mathcal{Y}}
\def\II{\mathbf{I}}
\def\MM{\mathbf{M}}
\def\PP{\mathbf{P}}
\def\QQ{\mathbf{Q}}
\def\RR{\mathbf{R}}
\def\VV{\mathbf{V}}
\def\WW{\mathbf{W}}
\def\XX{\mathbf{X}}
\def\YY{\mathbf{Y}}
\def\ZZ{\mathbf{Z}}
\def\TT{\mathbf{T}}
\def\FF{\mathbf{F}}
\def\pp{\mathbf{p}}
\def\ii{\mathbf{i}}
\def\ppm{\mathbf{p}_m}
\def\iim{\mathbf{i}_m}
\def\tp{\AR{m}\operatorname{-tiltp}A}
\def\sil{(m+1)\operatorname{-silt}A}
\def\ft{\operatorname{f-}s\operatorname{-tors}A}
\title{Tilting theory for extended module categories}
\dedicatory{Dedicated to Professor Bin Zhu on the occasion of his 60th birthday}
\author{Yu Zhou}
\address{School of Mathematical Sciences,
Beijing Normal University,
100875 Beijing,
China}
\email{yuzhoumath@gmail.com}
\keywords{extended module category, $s$-torsion pair, Auslander-Reiten theory, $\tau$-tilting, silting complex}
\thanks{The work was supported by National Natural Science Foundation of China (Grants No. 12271279, 12031007).}
\begin{document}

\begin{abstract}
    In extended hearts of bounded $t$-structures on a triangulated category, we provide a Happel-Reiten-Smal{\o} tilting theorem and a characterization for $s$-torsion pairs. Applying these to $m$-extended module categories, we characterize torsion pairs induced by $(m+1)$-term silting complexes. After establishing Auslander-Reiten theory in extended module categories, we introduce $\AR{m}$-tilting pairs and show bijections between $\AR{m}$-tilting pairs, $(m+1)$-term silting complexes, and functorially finite $s$-torsion pairs.
\end{abstract}

\maketitle

\section*{Introduction}

Tilting theory has occupied a central position in the representation theory of algebras since the early seventies \cite{BGP,APR,BB,HR,B}. One of the significant results is that each (classical) tilting module gives rise to a functorially finite torsion pair \cite{AS,S}. Support $\tau$-tilting modules were introduced by Adachi, Iyama and Reiten \cite{AIR}, completing the theory in two important aspects. First, in terms of mutation: in contrast to classical tilting modules, where an almost complete tilting module may have one or two complements, an almost complete support $\tau$-tilting module has exactly two complements. Second, the support $\tau$-tilting modules correspond one-to-one with all functorially finite torsion pairs. 

In the work \cite{AIR}, the support $\tau$-tilting modules are also shown to correspond one-to-one with the 2-term silting complexes. Silting complexes were introduced by Keller and Vossieck \cite{KV} to classify bounded t-structures in the bounded derived category of a Dynkin quiver. K\"{o}nig and Yang \cite{KY} extended this result to the case of general finite-dimensional (non-positive differential graded) algebras, proving that silting complexes correspond one-to-one with bounded t-structures with length heart. In contrast to 2-term silting complexes, which correspond to torsion pairs in the module category, Gupta \cite{G} showed that general silting complexes correspond to torsion pairs in certain truncated subcategories of the derived category. We refer to these truncated subcategories as extended module categories.

In this paper, we first consider a general framework. Fix a positive integer $m$. Let $\D$ be a triangulated category, and let $(\D^{\leq0},\D^{\geq0})$ be a bounded $t$-structure on $\D$. We call 
$$\D^{[-(m-1),0]}=\D^{\leq 0}\cap\D^{\geq -(m-1)}$$
the $m$-extended heart of $(\D^{\leq0},\D^{\geq0})$. This subcategory is closed under extensions and is, therefore, an extriangulated category in the sense of \cite{NP}. A pair $(\T,\F)$ of full subcategories of $\D^{[-(m-1),0]}$ is called a torsion pair if $\Hom(\T,\F)=0$ and $\D^{[-(m-1),0]}=\T*\F$. A torsion pair $(\T,\F)$ is called an $s$-torsion pair if $\Hom(\T,\F[-1])=0$. Applying a main result of \cite{AET} to $\D^{[-(m-1),0]}$, one obtains a bijection between the set of bounded $t$-structures $(\C^{\leq 0},\C^{\geq 0})$ on $\D$, satisfying $\D^{\leq -m}\subseteq\C^{\leq 0}\subseteq\D^{\leq 0}$, and the set of $s$-torsion pairs in $\D^{[-(m-1),0]}$, see \Cref{prop:bi} for more details. Moreover, we have the following generalization of Happel-Reiten-Smal{\o} tilting.

\begin{theorem}[\Cref{prop:hrs}]
    For any $s$-torsion pair $(\T,\F)$ in $\D^{[-(m-1),0]}$, $(\F[m],\T)$ is an $s$-torsion pair in the $m$-extended heart of the bounded $t$-structure corresponding to $(\T,\F)$. 
\end{theorem}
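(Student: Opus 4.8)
The plan is to make the bijection of \Cref{prop:bi} explicit enough to compare the two extended hearts, and then to read off the defining conditions of an $s$-torsion pair from the $t$-structure axioms. Write $\C=(\C^{\leq0},\C^{\geq0})$ for the bounded $t$-structure corresponding to $(\T,\F)$, and let $\tau_{\leq n},\tau_{\geq n}$ be the truncation functors of the original $t$-structure. Then $\D^{\leq-m}\subseteq\C^{\leq0}\subseteq\D^{\leq0}$, and unwinding the correspondence gives $\T=\C^{\leq0}\cap\D^{[-(m-1),0]}$ and $\F=\C^{\geq1}\cap\D^{[-(m-1),0]}$. Taking perpendiculars and shifting the nesting $\D^{\leq-m}\subseteq\C^{\leq0}\subseteq\D^{\leq0}$ produces the bookkeeping facts I will use: $\D^{\geq-(m-1)}\subseteq\C^{\geq-(m-1)}\subseteq\D^{\geq-(2m-1)}$, $\C^{\geq-(m-1)}=\C^{\geq1}[m]$, $\C^{\geq1}=\C^{\geq-(m-1)}[-m]$, and $\D^{[-(m-1),0]}[m]=\D^{[-(2m-1),-m]}$.

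The easy parts come first. Both $\T$ and $\F[m]$ lie in $\C^{[-(m-1),0]}=\C^{\leq0}\cap\C^{\geq-(m-1)}$: indeed $\T\subseteq\C^{\leq0}$ by construction and $\T\subseteq\D^{[-(m-1),0]}\subseteq\C^{\geq-(m-1)}$, while $\F\subseteq\C^{\geq1}$ gives $\F[m]\subseteq\C^{\geq1}[m]=\C^{\geq-(m-1)}$ and $\F\subseteq\D^{\leq0}$ gives $\F[m]\subseteq\D^{\leq-m}\subseteq\C^{\leq0}$. The two orthogonality conditions are then immediate from $t$-structure orthogonality and do not use the hypothesis that $(\T,\F)$ be $s$ (which was already consumed in \Cref{prop:bi} to make $\C$ an honest $t$-structure): since $\F[m]\subseteq\D^{\leq-m}$ whereas $\T\subseteq\D^{\geq-(m-1)}$ and $\T[-1]\subseteq\D^{\geq-(m-2)}\subseteq\D^{\geq-(m-1)}$, the vanishing $\Hom(\D^{\leq-m},\D^{\geq-(m-1)})=0$ yields both $\Hom(\F[m],\T)=0$ and $\Hom(\F[m],\T[-1])=0$.

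The substantive point is the equality $\C^{[-(m-1),0]}=\F[m]*\T$. The inclusion $\supseteq$ follows from the previous paragraph together with the fact that an extended heart is extension-closed — which also makes the relevant triangle a conflation, so the $*$ here is the internal one. For $\subseteq$, take $X\in\C^{[-(m-1),0]}$, so $X\in\D^{\leq0}$ and $X\in\C^{\geq-(m-1)}\subseteq\D^{\geq-(2m-1)}$; hence the original truncation triangle $\tau_{\leq-m}X\to X\to\tau_{\geq-(m-1)}X\to(\tau_{\leq-m}X)[1]$ has $\tau_{\geq-(m-1)}X\in\D^{[-(m-1),0]}$ and $\tau_{\leq-m}X\in\D^{[-(m-1),0]}[m]$. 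Rotating this triangle both ways and using that $\C^{\leq0}$ and $\C^{\geq-(m-1)}$ are extension-closed — with $(\tau_{\leq-m}X)[1]\in\D^{\leq-m}\subseteq\C^{\leq0}$ and $(\tau_{\geq-(m-1)}X)[-1]\in\D^{\geq-(m-2)}\subseteq\C^{\geq-(m-1)}$ — gives $\tau_{\geq-(m-1)}X\in\C^{\leq0}$ and $\tau_{\leq-m}X\in\C^{\geq-(m-1)}$. Intersecting with $\D^{[-(m-1),0]}$ (after a shift in the second case) then gives $\tau_{\geq-(m-1)}X\in\T$ and $(\tau_{\leq-m}X)[-m]\in\C^{\geq1}\cap\D^{[-(m-1),0]}=\F$, that is $\tau_{\leq-m}X\in\F[m]$, so $X\in\F[m]*\T$. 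Combining the three steps, $(\F[m],\T)$ is an $s$-torsion pair in $\C^{[-(m-1),0]}$. I expect the last paragraph to be the only real obstacle: it forces one to work with the two $t$-structures at once and to keep invoking the nesting $\D^{\leq-m}\subseteq\C^{\leq0}\subseteq\D^{\leq0}$ in order to see that the degree-$(-m)$ truncation for the original $t$-structure already realizes the wanted decomposition; the rest, including the $s$-part of the conclusion, is formal.
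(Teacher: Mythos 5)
Your proof is correct, and for the substantive inclusion $\C^{[-(m-1),0]}\subseteq\F[m]*\T$ it takes a genuinely different route from the paper. The paper works with the two filtrations coming from the description $\C^{[-(m-1),0]}=(\D^{\leq -m}*\T)\cap(\F[m]*\D^{\geq -(m-1)})$: given $\XX$ there, it picks one triangle of each type, applies the octahedral axiom, and uses $\Hom(\F[m],\T)=0$ to conclude that the $\D^{\leq -m}$-piece is a direct summand of the $\F[m]$-piece, whence it lies in $\F[m]$ by \Cref{lem:summand} (closure of $\F$ under summands). You instead observe that the canonical truncation triangle of $\XX$ at degree $-m$ for the \emph{original} $t$-structure already realizes the desired decomposition: the nesting $\D^{\leq -m}\subseteq\C^{\leq 0}\subseteq\D^{\leq 0}$ and its perpendicular version place the two outer terms of each rotated triangle inside $\C^{\leq 0}$ (resp. $\C^{\geq -(m-1)}$), and extension-closure of the aisle and co-aisle then puts $\sigma^{\geq -(m-1)}\XX$ in $\C^{\leq 0}\cap\D^{[-(m-1),0]}=\T$ and $\sigma^{\leq -m}\XX$ in $\F[m]$. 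This avoids both the octahedral axiom and the summand-closure of $\F$, at the modest cost of the bookkeeping inclusions $\D^{\geq -(m-1)}\subseteq\C^{\geq -(m-1)}\subseteq\D^{\geq -(2m-1)}$; it also identifies the torsion/torsion-free decomposition of $\XX$ in $\C^{[-(m-1),0]}$ concretely as the truncation triangle of the ambient $t$-structure, which is a pleasant by-product the paper's argument does not make explicit. Your treatment of the easy parts (orthogonality in both degrees $0$ and $-1$, and the inclusion $\F[m]*\T\subseteq\C^{[-(m-1),0]}$ via extension-closure of the extended heart) is also slightly more economical than the paper's displayed computation, and correctly notes that the $s$-condition $\Hom(\F[m],\T[-1])=0$ is a degree argument independent of the hypothesis on $(\T,\F)$.
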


We also provide an alternative criterion for a torsion pair to be an $s$-torsion pair. Specifically, a torsion pair $(\T,\F)$ is an $s$-torsion pair if and only if $\T$ is closed under $m$-factors  if and only if $\F$ is closed under $m$-subobjects, see \Cref{def:sfac}, \Cref{closed} and \Cref{prop:fac}.

We then apply these results to bounded derived categories of finite-dimensional algebras. Let $A$ be a finite-dimensional algebra over a field $\k$ and $\mod A$ be the category of finitely generated right $A$-modules. The $m$-extended module category $\emod{m}{A}$ is the full subcategory of the bounded derived category $D^b(\mod A)$ of $\mod A$, given as 
$$\emod{m}{A}=\{\XX\in D^b(\mod A)\mid H^i(\XX)=0,\ \forall i\notin [-(m-1),0]\}.$$
It is shown in \cite{G} that $(m+1)$-term silting complexes $\PP\in K^b(\proj A)$ one-to-one correspond to functorially finite $s$-torsion pairs $(\T(\PP),\F(\PP))$ in $\emod{m}{A}$. We show that both $\T(\PP)$ and $\F(\PP)$ have enough projective objects and enough injective objects, see \Cref{prop:tpp1}. As a consequence, we have the following characterization of this torsion pair, where $\nu$ is the Nakayama functor and $H^{[-(m-1),0]}$ is a truncation functor.

\begin{theorem}[\Cref{prop:genTP}]
    Let $\PP$ be an $(m+1)$-term silting complex. Then
    $$\T(\PP)=\Fac_{m}\left(H^{[-(m-1),0]}(\PP)\right)\text{ and }\F(\PP)=\Sub_{m}\left(H^{[-(m-1),0]}(\nu\PP[-1])\right).$$
\end{theorem}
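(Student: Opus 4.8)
The plan is to prove each of the two identities by a double inclusion; I carry this out for $\T(\PP)$, the identity for $\F(\PP)$ following by a dual argument with the Nakayama functor. Fix a representative of $\PP$ in $K^b(\proj A)$ concentrated in degrees $[-m,0]$, so that $M:=H^{[-(m-1),0]}(\PP)=\tau^{\geq-(m-1)}\PP$ fits into the standard truncation triangle
$$H^{-m}(\PP)[m]\ \longrightarrow\ \PP\ \longrightarrow\ M\ \longrightarrow\ H^{-m}(\PP)[m+1].$$
Recall from \Cref{prop:bi} and the construction of $(\T(\PP),\F(\PP))$ that $\T(\PP)=\emod m A\cap\D^{\leq0}_\PP$, where $(\D^{\leq0}_\PP,\D^{\geq0}_\PP)$ is the bounded $t$-structure attached to $\PP$, with $\D^{\leq0}_\PP=\{\XX\in D^b(\mod A)\mid\Hom(\PP,\XX[i])=0\text{ for all }i>0\}$, and that this $t$-structure is intermediate because $\PP$ is $(m+1)$-term: any complex with cohomology concentrated in degrees $\leq-m$ lies in $\D^{\leq0}_\PP$, while $\D^{\leq0}_\PP\subseteq\{\XX\mid H^i(\XX)=0,\ i>0\}$.

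I would first show that $M\in\T(\PP)$ and that $M$ is a projective object of the extriangulated category $\T(\PP)$. Rotating the truncation triangle to $\PP\to M\to H^{-m}(\PP)[m+1]\to\PP[1]$ presents $M$ as an extension of $H^{-m}(\PP)[m+1]$ by $\PP$; the former has cohomology concentrated in degree $-(m+1)<-m$, hence lies in $\D^{\leq0}_\PP$, and $\PP\in\D^{\leq0}_\PP$, so, aisles being extension-closed, $M\in\D^{\leq0}_\PP$; since also $M\in\emod m A$, we get $M\in\T(\PP)$. Applying $\Hom(-,\YY[1])$ to the same triangle for $\YY\in\T(\PP)$: the term $\Hom(\PP,\YY[1])$ vanishes as $\YY\in\D^{\leq0}_\PP$, and $\Hom(H^{-m}(\PP)[m+1],\YY[1])\cong\Hom_{D^b(\mod A)}(H^{-m}(\PP),\YY[-m])=0$ because $H^{-m}(\PP)$ sits in degree $0$ and $\YY[-m]$ in positive degrees; hence $\Hom(M,\YY[1])=0$, i.e.\ $M$ is projective in $\T(\PP)$. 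The inclusion $\Fac_m(M)\subseteq\T(\PP)$ now follows at once: by \cite{G} the pair $(\T(\PP),\F(\PP))$ is an $s$-torsion pair, so by \Cref{closed} and \Cref{prop:fac} the torsion class $\T(\PP)$ is closed under $m$-factors, and, being a torsion class, also under finite direct sums, direct summands and extensions; since $M\in\T(\PP)$, the $m$-factor closure $\Fac_m(M)$ of $\add M$ (\Cref{def:sfac}) is contained in $\T(\PP)$.

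For the reverse inclusion I would invoke \Cref{prop:tpp1}, which gives that $\T(\PP)$ has enough projectives. Together with the projectivity of $M$ and with $\thick\PP=K^b(\proj A)$, this should be strengthened to the statement that $\add M$ is exactly the subcategory of projective objects of $\T(\PP)$: generation of $K^b(\proj A)$ forces each $\XX\in\T(\PP)$ to receive a deflation $M_0\twoheadrightarrow\XX$ with $M_0\in\add M$, built from a right $\add\PP$-approximation of $\XX$ truncated into $\emod m A$. One then resolves: a conflation $\XX_1\rightarrowtail M_0\twoheadrightarrow\XX$ in $\emod m A$ with $M_0\in\add M$ reduces matters to $\XX_1$, and the long exact cohomology sequence shows that the cohomological amplitude of the successive kernels strictly drops, so that after at most $m$ steps the process terminates and displays $\XX$ as an $m$-factor of an object of $\add M$, i.e.\ $\XX\in\Fac_m(M)$. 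The identity for $\F(\PP)$ is obtained dually, using that $\F(\PP)$ has enough injectives (\Cref{prop:tpp1}) and that $\nu$ carries $K^b(\proj A)$ to $K^b(\inj A)$ and interchanges the projective side of $\T(\PP)$ with the injective side of $\F(\PP)$: $N:=H^{[-(m-1),0]}(\nu\PP[-1])$ is injective in $\F(\PP)$, $\add N$ is the subcategory of injectives, and an $m$-step coresolution by $\add N$ yields $\F(\PP)=\Sub_m(N)$; alternatively the $\F$-identity may be deduced formally from the $\T$-identity applied to $A^{\mathrm{op}}$ and the dual silting complex, via the standard $k$-linear duality and the shift by $m$ recorded in \Cref{prop:hrs}.

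The crux is the reverse inclusion. Projectivity of $M$ falls out of the truncation triangle, but proving that $\add M$ is \emph{large enough} to deflate onto all of $\T(\PP)$ genuinely uses the generation hypothesis $\thick\PP=K^b(\proj A)$ together with the finiteness supplied by \Cref{prop:tpp1}, and bounding by $m$ the length of the resulting $\add M$-resolution requires careful cohomological bookkeeping inside $\emod m A$ — which is, in substance, the same mechanism underlying the $m$-factor characterisation of $s$-torsion pairs.
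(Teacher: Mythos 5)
Your argument is in substance the paper's own proof: the inclusion $\Fac_{m}\left(H^{[-(m-1),0]}(\PP)\right)\subseteq\T(\PP)$ follows from $H^{[-(m-1),0]}(\PP)\in\T(\PP)$ together with closure of $\T(\PP)$ under $m$-factors (the paper proves the latter directly in \Cref{lem:fac}; routing it through \Cref{lem:torA} and \Cref{prop:fac} as you do is an equivalent path), and the reverse inclusion follows by iterating \Cref{prop:tpp1}~(a), with the $\F(\PP)$ statement handled dually. One correction to your last step, though: you assert that the cohomological amplitude of the successive kernels strictly drops, so that the $\add H^{[-(m-1),0]}(\PP)$-resolution terminates after at most $m$ steps. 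That claim is neither true (already for $m=1$ the kernel of a projective cover inside a torsion class need not vanish) nor needed: \Cref{def:sfac} does not require $\ZZ_m=0$, only the existence of $m$ triangles whose intermediate terms $\ZZ_1,\dots,\ZZ_m$ lie in $\emod{m}{A}$, and \Cref{prop:tpp1}~(a) guarantees exactly that each successive kernel stays in $\T(\PP)\subseteq\emod{m}{A}$. Hence $m$ applications of \Cref{prop:tpp1}~(a) produce verbatim the data witnessing $\XX\in\Fac_{m}\left(H^{[-(m-1),0]}(\PP)\right)$, and no bookkeeping of cohomological amplitudes is necessary; as stated, that part of your justification would not survive scrutiny even though the conclusion it is meant to support is correct.
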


We also show that the number of non-isomorphic indecomposable projective objects in $\T(\PP)$ (resp. $\F(\PP)$) is the same as the number of non-isomorphic indecomposable injective objects in $\T(\PP)$ (resp. $\F(\PP)$), see \Cref{cor:num}.

The notion of Auslander-Reiten triangles on an extension-closed subcategory of a triangulated category, or more generally, Auslander-Reiten extriangles, or equivalently, almost split extensions on an extriangulated category, were introduced in \cite{J,ZZ1,INP}, see also \cite{LN}. We show that the extended module category $\emod{m}{A}$ has Auslander-Reiten triangles (although the bounded derived category $D^b(\mod A)$ may not). 

\begin{theorem}[\Cref{thm:AR}]
    Let $\ZZ$ be an indecomposable object in $\emod{m}{A}$. If $\ZZ$ is not projective in $\emod{m}{A}$, there is an Auslander-Reiten triangle in $\emod{m}{A}$
    \[\AR{m}(\ZZ)\to\YY\to\ZZ\to\AR{m}(\ZZ)[1].\]
    If $\ZZ$ is not injective in $\emod{m}{A}$, there is an Auslander-Reiten triangle in $\emod{m}{A}$
    \[\ZZ\to\WW\to\AR{m}^{-}(\ZZ)\to\ZZ[1].\]
\end{theorem}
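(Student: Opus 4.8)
The plan is to fit $\emod{m}{A}$ into the general Auslander--Reiten theory of extriangulated categories \cite{INP} (see also \cite{J,ZZ1}), and to build the translate $\AR{m}$ out of the derived Nakayama functor of $A$, using only the half of Serre duality on $D^b(\mod A)$ that survives for perfect complexes.

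First I would record the structural facts, writing $\mathbb{E}(\XX,\YY)=\Hom_{D^b(\mod A)}(\XX,\YY[1])$ for the extension bifunctor of $\emod{m}{A}$. As a full, extension-closed subcategory of $D^b(\mod A)$ closed under direct summands, $\emod{m}{A}$ is a Hom-finite, Krull--Schmidt, $k$-linear extriangulated category. Using the identifications $\Hom_{D^b(\mod A)}(P[0],\YY[n])\cong\Hom_A(P,H^n\YY)$ for $P\in\proj A$ and $\Hom_{D^b(\mod A)}(\XX,I[n])\cong\Hom_A(H^{-n}\XX,I)$ for $I\in\inj A$, one checks that the indecomposable projective objects of $\emod{m}{A}$ are precisely the $P[0]$ with $P$ an indecomposable projective $A$-module, the indecomposable injective objects are precisely the $I[m-1]$ with $I$ an indecomposable injective $A$-module, and that $\emod{m}{A}$ has enough projectives and enough injectives: a projective cover $P_0\twoheadrightarrow H^0(\ZZ)$ in $\mod A$ induces a morphism $P_0[0]\to\ZZ$ in $D^b(\mod A)$ whose homotopy fiber $\Omega\ZZ$ has cohomology concentrated in $[-(m-1),0]$, hence lies in $\emod{m}{A}$, so $\Omega\ZZ\to P_0[0]\to\ZZ\dashrightarrow$ is a projective presentation in $\emod{m}{A}$; the injective case is dual. (Alternatively, the enough-projectives/injectives statement is \Cref{prop:tpp1} applied to the $1$-term silting complex $A[0]$, for which $\T(A[0])=\emod{m}{A}$.)

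The heart of the proof is to attach to an indecomposable non-projective $\ZZ\in\emod{m}{A}$ an object $\AR{m}(\ZZ)$ together with a natural isomorphism
\[\mathbb{E}(\YY,\AR{m}(\ZZ))\ \cong\ D\,\uemod{m}{A}(\ZZ,\YY),\qquad \YY\in\emod{m}{A},\]
where $D(-)=\Hom_k(-,k)$ and $\uemod{m}{A}$ denotes $\emod{m}{A}$ modulo morphisms factoring through projectives; this is the extriangulated analogue of the Auslander--Reiten formula $\Ext^1_A(-,\tau Z)\cong D\,\underline{\Hom}_A(Z,-)$. I would define $\AR{m}(\ZZ)$ as a suitable cohomological truncation $H^{[-(m-1),0]}$ of $\nu$ applied to a minimal projective presentation of $\ZZ$ in $\emod{m}{A}$, so that for $m=1$ it reduces to $\tau=D\operatorname{Tr}$ and in general it matches the description of $\F(\PP)$ in \Cref{prop:genTP}. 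To establish the displayed isomorphism I would use the one-sided Serre duality on $D^b(\mod A)$: for every $\mathbb{P}\in K^b(\proj A)$ and every $\XX\in D^b(\mod A)$ there is a natural isomorphism $\Hom_{D^b(\mod A)}(\mathbb{P},\XX)\cong D\,\Hom_{D^b(\mod A)}(\XX,\nu\mathbb{P})$. Since the projective objects of $\emod{m}{A}$ are perfect complexes, feeding the minimal projective presentation of $\ZZ$ into this duality, dualizing, and chasing the resulting long exact sequences of $\Hom$'s and $\mathbb{E}$'s --- with the truncation $H^{[-(m-1),0]}$ inserted precisely to return the Nakayama translate into $\emod{m}{A}$ --- should yield the isomorphism. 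Granting it, the element $\delta\in\mathbb{E}(\ZZ,\AR{m}(\ZZ))$ that corresponds, under the isomorphism at $\YY=\ZZ$, to any nonzero linear functional on the local ring $\uemod{m}{A}(\ZZ,\ZZ)$ vanishing on its radical determines an Auslander--Reiten extriangle ending at $\ZZ$ by the standard argument \cite{INP}; this is the first triangle of the statement.

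The second triangle follows by duality: the $k$-linear duality $D(-)=\Hom_k(-,k)$ induces an equivalence $\emod{m}{A}\simeq(\emod{m}{A^{\mathrm{op}}})^{\mathrm{op}}$ via $\XX\mapsto(D\XX)[m-1]$, interchanging projective and injective objects, so applying the first part over $A^{\mathrm{op}}$ and translating back produces the Auslander--Reiten extriangle starting at an indecomposable non-injective $\ZZ$, and defines $\AR{m}^{-}$ as the corresponding translate. The step I expect to be the main obstacle is the verification of the displayed Auslander--Reiten formula: because $\operatorname{gl.dim}A$ may be infinite, $\nu$ is not an equivalence on $D^b(\mod A)$ and that category has no Serre functor --- only the one-sided duality on perfect complexes is available --- so one must combine it carefully with the cohomological truncation and with an explicit description of which morphisms in $\emod{m}{A}$ factor through $\add A[0]$, respectively through $\add(DA[m-1])$, in order to identify both sides of the formula.
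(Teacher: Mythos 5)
Your route is genuinely different from the paper's. The paper does not prove the Auslander--Reiten formula first: it imports the existence of an Auslander--Reiten triangle $\nu\pp\ZZ[-1]\to\YY'\to\ZZ\to\nu\pp\ZZ$ in the unbounded homotopy category $K(\inj A)$ from Krause--Le (viewing $\ZZ\cong\ii\ZZ$ as a compact object there), and then performs two reductions to land in $\emod{m}{A}$: first replacing $\nu\pp\ZZ$ by $\nu\ppm(\ZZ)$ (using that objects of $\D^{[-(m-1),0]}$ cannot see the part of $\pp\ZZ$ in degrees $\leq-(m+1)$), and then routing the connecting class through the canonical truncation $\AR{m}(\ZZ)[1]\to\nu\ppm(\ZZ)$, which is where non-projectivity of $\ZZ$ enters. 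The duality $\Hom_{\uemod{m}{A}}(\XX,\YY)\cong D\mathbb{E}(\YY,\AR{m}(\XX))$ is then deduced \emph{afterwards} (\Cref{cor:AR}) from the existence theorem via \cite[Theorem~3.6]{INP}. You propose to reverse this logical order, which is the classical Auslander--Reiten strategy and is viable in principle; its advantage is that it avoids the unbounded homotopy category entirely, at the cost of having to prove the duality by hand.

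That cost is exactly where your proposal has a gap. The entire technical content of the theorem sits in the isomorphism $\mathbb{E}(\YY,\AR{m}(\ZZ))\cong D\,\Hom_{\uemod{m}{A}}(\ZZ,\YY)$, and your sketch reduces it to ``chasing long exact sequences,'' which is not enough. Concretely: applying $\Hom(\YY,-)$ to the truncation triangle $\AR{m}(\ZZ)\to\nu\ppm(\ZZ)[-1]\to H^0(\nu\ppm(\ZZ))[-1]\to\AR{m}(\ZZ)[1]$ and using $\Hom(\YY,\nu\ppm(\ZZ))\cong D\Hom(\ppm(\ZZ),\YY)\cong D\Hom(\ZZ,\YY)$ identifies $\mathbb{E}(\YY,\AR{m}(\ZZ))$ with the annihilator of the image of a map $D\Hom(\YY,H^0(\nu\ppm(\ZZ)))\to\Hom(\ZZ,\YY)$; the real work is to prove that this image is precisely the subspace of morphisms factoring through $\proj A$, functorially in $\YY$, and this step is asserted rather than carried out. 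Two further ingredients you rely on implicitly also need proof: that $\AR{m}(\ZZ)$ is indecomposable with no injective summands (the paper's \Cref{prop:ARfun} and \Cref{ARprop}, which require the minimality of $\ppm(\ZZ)$ and the stable equivalence $\uemod{m}{A}\simeq\oemod{m}{A}$) --- without this, a class $\delta$ killed by all non-retractions is only a right almost split extension and cannot be upgraded to an Auslander--Reiten extriangle via \cite[Theorem~2.9]{INP}; and, for the second triangle, that the duality $\XX\mapsto(D\XX)[m-1]$ really intertwines $\AR{m}^-$ over $A$ with $\AR{m}$ over $A^{\operatorname{op}}$ as defined via truncated injective presentations. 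None of these steps is wrong, but as written the proposal defers the theorem's substance to ``the standard argument.''
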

Note that these Auslander-Reiten translations $\AR{m}$ and $\AR{m}^-$ (see \Cref{def:AR} for their constructions) are different from the higher Auslander-Reiten translations $\tau_m$ and $\tau_m^-$ introduced in \cite{I,I2}, see \Cref{rmk:Iyama} for a preliminary discussion on their relationship.

Finally, we generalize $\tau$-tilting theory from $\mod A$ to $\emod{m}{A}$. An object $\XX$ in $\emod{m}{A}$ is called positive $\AR{m}$-rigid if
$$\Hom(\XX,\AR{m}(\XX)[j])=0,\ j\leq 0.$$
A pair $(\XX,P)$ of $\XX\in\emod{m}{A}$ and $P\in\proj A$ is called $\AR{m}$-tilting if 
\begin{enumerate}
    \item $\XX$ is positive $\AR{m}$-rigid,
    \item $\Hom(P,\XX[j])=0$ for any $j\leq 0$, and
    \item ${}^{\bot_{\leq 0}}(\AR{m}(\XX))\cap P^{\bot_{\leq 0}}\subseteq\Fac_{m}(\XX)$.
\end{enumerate}
We obtain the following one-to-one correspondences between $\AR{m}$-tilting pairs, functorially finite $s$-torsion pairs and $(m+1)$-term silting complexes.

\begin{theorem}[\Cref{thm:bi}]
    There are bijections between
    \begin{enumerate}
        \item the set of isoclasses of basic $\AR{m}$-tilting pairs in $\emod{m}{A}$,
        \item the set of functorially finite $s$-torsion pairs in $\emod{m}{A}$,
        \item the set of isoclasses of basic $(m+1)$-term silting complexes in $K^b(\proj A)$.
    \end{enumerate}
\end{theorem}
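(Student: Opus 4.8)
The plan is to take for granted the bijection between (2) and (3) established by Gupta \cite{G} (the assignment $\PP\mapsto(\T(\PP),\F(\PP))$), so that it suffices to produce a bijection between (1) and (2); composed with Gupta's, this also realizes (1)$\leftrightarrow$(3). I would define two maps and show they are mutually inverse. From a $\AR{m}$-tilting pair $(\XX,P)$ I send it to $(\Fac_m(\XX),\Fac_m(\XX)^{\bot})$. By axiom (3) of a $\AR{m}$-tilting pair, $\Fac_m(\XX)={}^{\bot_{\leq0}}(\AR{m}(\XX))\cap P^{\bot_{\leq0}}$, and I would check that this right-hand side is a functorially finite torsion class: covariant finiteness because $\XX$ generates $\Fac_m(\XX)$, and the torsion-pair axioms plus contravariant finiteness from the ``$\bot_{\leq0}$'' descriptions together with positive $\AR{m}$-rigidity of $\XX$ (the analogue of the classical fact that $\Fac M$ is a functorially finite torsion class for $\tau$-rigid $M$). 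That it is an $s$-torsion class is immediate from \Cref{prop:fac} since $\Fac_m(\XX)$ is closed under $m$-factors by construction (\Cref{def:sfac}). Conversely, given a functorially finite $s$-torsion pair $(\T,\F)$, I use \Cref{prop:tpp1}: $\T$ has enough projective objects as an extriangulated category; I let $\XX_\T$ be a basic additive generator of the subcategory of projective objects of $\T$, and $P_\T$ the basic projective $A$-module with $\add P_\T=\{Q\in\proj A:\Hom(Q,\T[j])=0\ \text{for all }j\leq0\}$.

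Then I would verify the two maps are inverse to each other. In the direction $(\T,\F)\mapsto(\XX_\T,P_\T)\mapsto(\Fac_m(\XX_\T),-)$, everything reduces to $\Fac_m(\XX_\T)=\T$: the inclusion $\Fac_m(\XX_\T)\subseteq\T$ holds because $\XX_\T\in\T$ and $\T$, being $s$-torsion, is closed under $m$-factors (\Cref{prop:fac}); the reverse inclusion holds because having enough projectives lets one resolve any object of $\T$ by $m$ successive conflations with terms in $\add\XX_\T$, which is exactly the meaning of being an $m$-factor. In the other direction, for a $\AR{m}$-tilting pair $(\XX,P)$ one must show that $\XX$ is precisely (a basic generator of) the projective objects of $\Fac_m(\XX)$ and that $P=P_{\Fac_m(\XX)}$; here I would invoke axiom (3) again, rewriting $\T=\Fac_m(\XX)$ as ${}^{\bot_{\leq0}}(\AR{m}(\XX))\cap P^{\bot_{\leq0}}$, and use an Auslander--Reiten duality in $\emod{m}{A}$ coming from \Cref{thm:AR}, of the form $\Hom(\ZZ,\AR{m}(\YY)[1])\cong D\,\overline{\Hom}(\YY,\ZZ)$, to translate ``$\Hom(\XX,\YY[1])=0$ for all $\YY\in\T$'' (i.e.\ $\XX$ is projective in $\T$) into the vanishing condition defining $\T$, and to see that no further indecomposable is projective in $\T$ and that $P$ is maximal with $\Hom(P,\T[j])=0$ for $j\leq0$.

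The explicit silting picture is useful as a sanity check and can replace some of these arguments. For an $(m+1)$-term silting complex $\PP$, the pair corresponding to $(\T(\PP),\F(\PP))$ should be $(H^{[-(m-1),0]}(\PP),P_\PP)$, where $P_\PP$ is the sum of those indecomposable projective modules $Q$ for which $Q[m]$ is a direct summand of the minimal form of $\PP$: indeed $\Fac_m(H^{[-(m-1),0]}(\PP))=\T(\PP)$ is exactly \Cref{prop:genTP}, and the role of $P_\PP$ is pinned down by the elementary identity $\Hom_{D^b(\mod A)}(Q,-[i])\cong\Hom_A(Q,H^i(-))$ for $Q$ projective, which matches the contribution of the stalk summands $Q[m]$ to the defining $\Hom$-vanishing of $\T(\PP)$ with the factor $P_\PP^{\bot_{\leq0}}$ in axiom (3). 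Positive $\AR{m}$-rigidity of $H^{[-(m-1),0]}(\PP)$ should likewise be deduced from $\Hom(\PP,\PP[k])=0$ for $k\geq1$ via the same duality.

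The step I expect to be the main obstacle is precisely this Auslander--Reiten duality bridge and the attendant degree bookkeeping: identifying the perpendicular class ${}^{\bot_{\leq0}}(\AR{m}(\XX))\cap P^{\bot_{\leq0}}$ with the $\Fac_m$/silting-defined torsion class, and checking that the projective objects of this torsion class are exactly $\add\XX$ with ``support complement'' $P$. This is the analogue of the technical core of \cite{AIR} (where $\Fac M={}^{\bot}(\tau M)$ for $\tau$-tilting $M$), but now carried out in an extriangulated category whose objects are spread over $m$ cohomological degrees, so one has to keep careful track of how $\AR{m}$ shifts degrees (as recorded in \Cref{def:AR}) and of how $\Hom$'s, the Nakayama functor and truncation interact across those degrees; the finiteness statements (that the projective objects of $\T$ form an additive subcategory with a single basic generator, needed for $\XX_\T$ even to make sense) also have to be extracted, most cleanly from the silting description via Gupta's bijection.
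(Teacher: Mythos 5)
Your overall architecture (take Gupta's bijection between (2) and (3) as given, then connect (1) to the rest) matches the paper's, but the paper realizes the remaining bijection as a map from (1) \emph{to (3)}, sending $(\XX,P)$ to $\ppm(\XX)\oplus P[m]$, whereas you try to go from (1) directly to (2) via $(\XX,P)\mapsto(\Fac_m(\XX),\Fac_m(\XX)^{\bot})$. This choice is where your argument has a genuine gap: you never establish that $\Fac_m(\XX)$ is actually a torsion class, i.e.\ that $\emod{m}{A}=\Fac_m(\XX)\ast\Fac_m(\XX)^{\bot}$, nor that the resulting pair is functorially finite. You justify this by appeal to ``the analogue of the classical fact that $\Fac M$ is a functorially finite torsion class for $\tau$-rigid $M$,'' but the paper's final example (the object $\XX=(0\to M)$ over $\k Q/\langle x_1x_2,y_1y_2\rangle$) shows precisely that this analogue is \emph{false} for $m\geq 2$: there are positive $\AR{m}$-rigid objects for which $(\Fac_m(\XX),\Fac_m(\XX)^{\bot})$ is not a functorially finite $s$-torsion pair. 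So the full tilting axiom (3) must be used, and the only mechanism available is the one you omit: prove that $\PP=\ppm(\XX)\oplus P[m]$ is not merely presilting but \emph{silting}. The paper does this by showing $\thick\PP=K^b(\proj A)$, i.e.\ that any $\YY\in\mod A$ with $\Hom(\PP,\YY[i])=0$ for all $i$ vanishes; this combines the identity ${}^{\bot_{\leq 0}}(\AR{m}(\XX))\cap P^{\bot_{\leq 0}}=\T(\PP)$ (from \Cref{lem:pre=tau}) with axiom (3) to get $\YY\in\Fac_m(\XX)$, and then \Cref{lem:bz1-2.7} and \Cref{lem:asfac} to get $\Hom(\Fac_m(\XX),\YY[j])=0$ for $j\leq 0$, forcing $\YY=0$. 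Only after $\PP$ is known to be silting do Gupta's theorem and \Cref{prop:genTP} yield that $\Fac_m(\XX)=\T(\PP)$ sits in a functorially finite $s$-torsion pair. Note also that trying to verify the torsion-pair axiom by identifying $\Fac_m(\XX)$ with $\T(\PP)$ for presilting $\PP$ is circular, since $(\T(\PP),\F(\PP))$ is only known to be a torsion pair when $\PP$ is silting (it rests on the K\"onig--Yang $t$-structure).

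The rest of your outline is sound and close in substance to the paper's: the correspondence between basic positive $\AR{m}$-rigid pairs and basic $(m+1)$-term presilting complexes via minimal projective presentations (your ``Auslander--Reiten duality bridge'' is exactly \Cref{lem:pre=tau}, in the form $\Hom(\ppm(\XX),\YY[i])\cong D\Hom(\YY,\AR{m}(\XX)[1-i])$ rather than a stable-Hom duality); the identification of the projectives of $\T(\PP)$ with $\add H^{[-(m-1),0]}(\PP)$ (\Cref{prop:tpp1}); and $\Fac_m(H^{[-(m-1),0]}(\PP))=\T(\PP)$ (\Cref{prop:genTP}). With the siltingness argument supplied, your two maps would indeed be mutually inverse, essentially reproducing the paper's proof with the middle vertex of the triangle moved from (3) to (2).
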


As mentioned previously, a bijection from (3) to (2) has already been established in \cite{G}, sending $\PP$ to $(\T(\PP),\F(\PP))$.

Recently, there have also been some works that apply higher Auslander-Reiten theory to generalize $\tau$-tilting theory and establish connections with higher torsion theory and silting theory, see \cite{AHJKPT, JJ,MM,M,RV,ZZ2} and so on.

The paper is organized as follows. In \Cref{sec:HRS}, we introduce the notion of $m$-extended heart of a bounded $t$-structure on a triangulated category and show a Happel-Reiten-Smal{\o} tilting theorem for an $s$-torsion pair in an extended heart. We also introduce the notion of $m$-factors and $m$-subobjects and use them to give a necessary and sufficient condition for a torsion pair to be an $s$-torsion pair. In \Cref{sec:tp}, we introduce the notion of $m$-extended module category of a finite-dimensional algebra and investigate the properties of torsion pairs within this category induced by $(m+1)$-term silting complexes. In \Cref{sec:AR}, we construct the Auslander-Reiten translations $\AR{m}$ and $\AR{m}^-$ and show that any $m$-extended module category has Auslander-Reiten triangles. In \Cref{sec:tau-tilting}, we develop the $\tau$-tilting theory for an extended module category. In \Cref{sec: trun}, we gather the basic constructions of the two methods used to truncate complexes in this paper.

\subsection*{Convention}

Throughout this paper, $\k$ is a field, and $m$ is a positive integer. Any category is assumed to be additive. Any subcategory of a category is assumed to be full. We use $\XX\in\X$ to denote that $\XX$ is an object in a category $\X$, and use $\X\subseteq\Y$ to denote that $\X$ is a subcategory of $\Y$. For any $\XX\in\X$, we denote by $\add\XX$ the additive hull of $\XX$, that is, the smallest additive subcategory of $\X$ containing $\XX$. For any two morphisms $f:\YY\to\ZZ$ and $g:\XX\to\YY$, we denote by $f\circ g:\XX\to\ZZ$ their composition. 

Let $\Y$ be a category. For any $\X\subseteq\Y$ and any $\YY\in\Y$, a right $\X$-approximation of $\YY$ is a morphism $f:\XX\to\YY$ with $\XX\in\X$ and such that any morphism from an object $\XX'$ in $\X$ to $\YY$ factors through $f$. A left $\X$-approximation of $\YY$ is defined dually. A subcategory $\X$ of $\Y$ is called contravariantly (resp. covariantly) finite in $\Y$ if any object in $\Y$ has a right (resp. left) $\X$-approximation. A subcategory $\X$ of $\Y$ is called functorially finite if it is both contravariantly finite and covariantly finite. A morphism $f:\XX\to\YY$ is called right minimal if every morphism $g:\XX\to\XX$ such that $f\circ g=f$ is an isomorphism. A left minimal morphism is defined dually.

The shift functor in a triangulated category is denoted by $[1]$. For any two subcategories $\X$ and $\Y$ of a triangulated category $\D$, we denote by $\X*\Y$ the subcategory of $\D$ consisting of objects $\ZZ$ such that there is a triangle
$$\XX\to\ZZ\to\YY\to\XX[1],$$
with $\XX\in\X$ and $\YY\in\Y$. We use $\Hom(\X,\Y)=0$ to denote $\Hom(\XX,\YY)=0$ for any $\XX\in\X$ and $\YY\in\Y$.

For a vertex $i$ of a quiver $Q$, we denote by $P_i$, $I_i$, and $S_i$, respectively, the corresponding projective, injective, and simple representations. For two arrows $a$ and $b$ of $Q$, we denote by $ab$ the path first $a$ then $b$.

\subsection*{Acknowledgments}

I would like to thank Aslak Bakke Buan, Xiao-Wu Chen, Esha Gupta, Peter J{\o}rgenson, Jiaqun Wei, Dong Yang, Zhaotai Zhang and Bin Zhu for their interesting and helpful discussions.

\section{Generalized Happel-Reiten-Smalo tilting}\label{sec:HRS}

Let $\mathcal{D}$ be a triangulated category. Throughout this section, for any two objects $\XX$ and $\YY$ in $\D$, we simply denote by $\Hom(\XX,\YY)=\Hom_\D(\XX,\YY)$ the set of morphisms from $\XX$ to $\YY$ in $\D$.

Let $(\mathcal{D}^{\leq 0},\mathcal{D}^{\geq 0})$ be a bounded $t$-structure on $\mathcal{D}$, i.e., $\mathcal{D}^{\leq 0}$ and $\mathcal{D}^{\geq 0}$ are subcategories of $\D$ satisfying the following.
\begin{enumerate}
    \item $\mathcal{D}^{\leq 0}[1]\subseteq\mathcal{D}^{\leq 0}$ and $\mathcal{D}^{\geq 0}[-1]\subseteq\mathcal{D}^{\geq 0}$,
    \item $\Hom(\mathcal{D}^{\leq 0},\mathcal{D}^{\geq 0}[-1])=0$,
    \item $\D=\mathcal{D}^{\leq 0}*\mathcal{D}^{\geq 0}[-1]$, and
    \item for any object $\XX$ in $\D$, there exists an integer $n>0$ such that $\XX[n]\in\mathcal{D}^{\leq 0}$ and $\XX[-n]\in\mathcal{D}^{\geq 0}$.
\end{enumerate}
Let $\mathcal{H}=\mathcal{D}^{\leq 0}\cap\mathcal{D}^{\geq 0}$ be its heart. 

\begin{notation}\label{trun1}
    For any integer $p$, we denote 
    $$\mathcal{D}^{\leq p}=\mathcal{D}^{\leq 0}[-p]\text{ and }\mathcal{D}^{\geq p}=\mathcal{D}^{\geq 0}[-p].$$
    For any integers $p\leq q$, we denote
    $$\mathcal{D}^{[p,q]}=\mathcal{D}^{\geq p}\cap\mathcal{D}^{\leq q}.$$
\end{notation}

For any integer $p$, the inclusions $\D^{\leq p}\to\D$ and $\D^{\geq p}\to\D$ admit adjoints
$$\sigma^{\leq p}:\D\to \D^{\leq p}\text{ and }\sigma^{\geq p}:\D\to \D^{\geq p},$$
which are called truncation functors.

\begin{notation}\label{trun2}
    For any integers $p\leq q$, we denote
    $$H^{[p,q]}=\sigma^{\geq p}\circ\sigma^{\leq q}\simeq\sigma^{\leq q}\circ\sigma^{\geq p}:\D\to \D^{[p,q]}.$$
    For any integer $p$, we denote
    $$H^p=H^{[p,p]}[p]:\D\to\mathcal{H}$$
    the $p$-th cohomology functor with respect to the $t$-structure $(\mathcal{D}^{\leq 0},\mathcal{D}^{\geq 0})$.
\end{notation}

\begin{definition}\label{def:exth}
    We call $\mathcal{D}^{[-(m-1),0]}$ the $m$-extended heart of a bounded $t$-structure $(\mathcal{D}^{\leq 0},\mathcal{D}^{\geq 0})$.
\end{definition}

By definition, the $1$-extended heart $\mathcal{D}^{[0,0]}$ is the (usual) heart $\mathcal{H}$ and in general,
$$\mathcal{D}^{[-(m-1),0]}=\mathcal{H}[m-1]*\cdots *\mathcal{H}[1]*\mathcal{H}.$$

\begin{remark}\label{extri}
    An extriangulated category is a triplet $(\mathcal{E},\mathbb{E},\mathfrak{s})$, where $\mathcal{E}$ is an additive category, $\mathbb{E}:\mathcal{E}^{\operatorname{op}}\times\mathcal{E}\to Ab$ is a biadditive functor, and $\mathfrak{s}$ is an additive realization of $\mathbb{E}$, satisfying certain conditions. We refer to \cite[Definition~2.12]{NP} for more details. Since $\mathcal{D}^{[-(m-1),0]}$ is closed under extensions (to see this, take the cohomologies), it becomes an extriangulated category, equipped with $\mathbb{E}(\XX,\YY):=\Hom(\XX,\YY[1])$ and $\mathfrak{s}(\delta)=[\YY\xrightarrow{y}\ZZ\xrightarrow{x}\XX]$ for $\delta\in\mathbb{E}(\XX,\YY)$, where $\YY\xrightarrow{y}\ZZ\xrightarrow{x}\XX\xrightarrow{\delta}\YY[1]$ is a triangle in $\D$. Moreover, $\mathcal{D}^{[-(m-1),0]}$ is an extriangulated category with a negative first extension in the sense of \cite[Definition~2.3]{AET} (cf. Example~2.4 there), where $\mathbb{E}^{-1}(\XX,\YY):=\Hom(\XX,\YY[-1])$.
\end{remark}

\begin{definition}
    A pair $(\T,\F)$ of subcategories of $\mathcal{D}^{[-(m-1),0]}$ is called a torsion pair provided that the following hold.
    \begin{enumerate}
    \item $\Hom(\T,\F)=0$, and
    \item $\mathcal{D}^{[-(m-1),0]}=\T\ast\F$.
    \end{enumerate}
\end{definition}

\begin{remark}\label{tor in extri}
The above definition of torsion pairs coincides with that in an extriangulated category introduced in \cite{HHZ}. However, this is not the case in \cite{G}, see \Cref{rmk:G1}. 
\end{remark}

\begin{definition}[{\cite[Definition~3.1]{AET}}]
    A torsion pair $(\T,\F)$ in $\D^{[-(m-1),0]}$ is called an $s$-torsion pair, if 
    $\Hom(\T,\F[-1])=0.$
\end{definition}

In the usual case (i.e., $m=1$), any torsion pair in $\D^{[0,0]}=\mathcal{H}$ is an $s$-torsion pair, because $\Hom(\mathcal{H},\mathcal{H}[-1])=0$.

For any subcategory $\X$ of $\D$, we denote by $\X^\bot$ (resp. ${}^\bot\X$) the subcategory of $\D$ consisting of objects $\ZZ$ satisfying $\Hom(\XX,\ZZ)=0$ (resp. $\Hom(\ZZ,\XX)=0$) for any $\XX\in\X$. The following lemma is from \cite[Proposition~3.2]{AET}.

\begin{lemma}\label{lem:summand}
    Let $(\T,\F)$ be an $s$-torsion pair in $\D^{[-(m-1),0]}$. Then $\T={}^\bot\F$ and $\F=\T^\bot$. In particular, $\T$ and $\F$ are closed under taking extensions and direct summands.
\end{lemma}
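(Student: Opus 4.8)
The plan is to prove the two equalities $\T = {}^\bot\F$ and $\F = \T^\bot$ directly from the two defining conditions of a torsion pair together with the extra $s$-condition $\Hom(\T,\F[-1])=0$; the statement about closure under extensions and summands then follows formally.

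First I would establish $\F = \T^\bot$. The inclusion $\F \subseteq \T^\bot$ is immediate from $\Hom(\T,\F)=0$. For the reverse inclusion, take $\ZZ \in \T^\bot \cap \D^{[-(m-1),0]}$ and use $\D^{[-(m-1),0]} = \T * \F$ to obtain a triangle $\TT \to \ZZ \to \FF \to \TT[1]$ in $\D$ with $\TT \in \T$ and $\FF \in \F$. Applying $\Hom(\TT,-)$ to this triangle and using $\Hom(\TT,\FF)=0$ (from condition (1)) shows the connecting map $\FF[-1] \to \TT$ is such that the composite $\TT \to \ZZ$ is zero (since $\ZZ \in \T^\bot$ kills $\Hom(\TT,\ZZ)$); more precisely, from $\Hom(\TT,\ZZ)=0$ the map $\TT \to \ZZ$ vanishes, so the triangle splits and $\ZZ$ becomes a direct summand of $\FF$, hence of $\FF \oplus \TT[1]$. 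To conclude $\ZZ \in \F$ I need $\F$ closed under direct summands — but that is exactly part of what we are proving, so instead I argue that $\TT[1] \to \FF[-1]\oplus$ wait: cleaner is to observe the split triangle gives $\FF \cong \ZZ \oplus \TT[1]$, and then apply $\Hom(\T,-)$ to see $\TT[1]$ is a summand killed appropriately; the honest route is to note $\Hom(\T, \TT[1])$ need not vanish, so instead I use the $s$-condition: I will show the connecting morphism $\FF[-1]\to\TT$ is zero. Indeed $\Hom(\FF[-1],\TT) = \Hom(\FF,\TT[1])$, and I claim this vanishes because $\TT[1] \in \T[1]$ and one can push the triangle analysis the other way — alternatively, decompose $\TT[1]$ via the $t$-structure. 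This is the step I expect to be the main obstacle: getting from "split triangle" to "$\ZZ$ genuinely lies in $\F$" without circularly invoking closure under summands. The resolution is to use $\Hom(\T,\F[-1])=0$: apply $\Hom(-,\ZZ)$ to $\TT\to\ZZ\to\FF\to\TT[1]$, giving exactness $\Hom(\TT[1],\ZZ)\to\Hom(\FF,\ZZ)\to\Hom(\ZZ,\ZZ)\to\Hom(\TT,\ZZ)=0$, so $\id_\ZZ$ lifts through $\FF\to\ZZ$... this shows $\ZZ$ is a summand of $\FF$; then, for the complementary summand $\ZZ'$ with $\FF\cong\ZZ\oplus\ZZ'$, one checks $\ZZ'\in\T$ (from the triangle and $\Hom(\T,\F)=0$ forcing the $\T$-part), whence $\Hom(\ZZ',\ZZ')$ receives a zero map and $\ZZ'$ sits in $\T\cap\F$; finally $\Hom(\ZZ',\ZZ')=0$ by condition (1), so $\ZZ'=0$ and $\ZZ\cong\FF\in\F$.

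Symmetrically, I would prove $\T = {}^\bot\F$: the inclusion $\T\subseteq{}^\bot\F$ is condition (1), and for the converse I take $\ZZ\in{}^\bot\F\cap\D^{[-(m-1),0]}$, write a triangle $\TT\to\ZZ\to\FF\to\TT[1]$, apply $\Hom(-,\FF)$ and $\Hom(\ZZ,-)$ to see the map $\ZZ\to\FF$ is zero (using $\Hom(\ZZ,\F)=0$ and that $\Hom(\TT[1],\FF)=\Hom(\TT,\FF[-1])=0$ by the $s$-condition — here is where the $s$-torsion hypothesis is essential and where the asymmetry with the classical heart case disappears), so the triangle splits, $\ZZ$ is a summand of $\TT$, and the complementary summand lies in $\T\cap\F=0$, giving $\ZZ\in\T$.

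Finally, closure under extensions and direct summands: for extensions, given a triangle $\XX\to\YY\to\ZZ\to\XX[1]$ with $\XX,\ZZ\in\F=\T^\bot$ and all three terms in $\D^{[-(m-1),0]}$ (the latter because $\D^{[-(m-1),0]}$ is extension-closed), apply $\Hom(\TT,-)$ for $\TT\in\T$ to the long exact sequence to get $\Hom(\TT,\YY)=0$, so $\YY\in\T^\bot=\F$; the argument for $\T={}^\bot\F$ is dual. For direct summands: if $\YY\oplus\YY'\in\F=\T^\bot$ then $\Hom(\TT,\YY)$ is a direct summand of $\Hom(\TT,\YY\oplus\YY')=0$, hence zero, so $\YY\in\T^\bot=\F$ (and one must note $\YY\in\D^{[-(m-1),0]}$ since that subcategory, being the intersection of the summand-closed subcategories $\D^{\le 0}$ and $\D^{\ge -(m-1)}$, is closed under summands); dually for $\T$. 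This completes the proof.
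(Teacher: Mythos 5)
The paper does not actually prove this lemma; it quotes it from \cite[Proposition~3.2]{AET}. Your overall strategy for a direct proof is the right one: use $\T*\F$ to write $\ZZ$ in a triangle $\TT\to\ZZ\to\FF\to\TT[1]$, show one of the maps vanishes, split the triangle, and then kill the unwanted summand; the closure statements then follow formally, as you say. But the final step of each half, as written, is wrong. After splitting you get $\FF\cong\ZZ\oplus\TT[1]$ (resp.\ $\TT\cong\ZZ\oplus\FF[-1]$), so the complementary summand $\ZZ'$ is a \emph{shift} of an object of $\T$ (resp.\ of $\F$), not an object of $\T$ or of $\F$. Your claims that ``one checks $\ZZ'\in\T$'' and that ``the complementary summand lies in $\T\cap\F=0$'' have no justification and are false in general: there is no reason for $\TT[1]$ to lie in $\T$, nor for $\FF[-1]$ to lie in $\F$ (indeed $\FF[-1]$ typically is not even in $\D^{[-(m-1),0]}$ before you know it is a summand of $\TT$). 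This is precisely the point where a bare torsion pair would not suffice, so the argument cannot go through on conditions (1)--(2) alone.

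The repair is short and uses exactly the ingredient you flag but do not deploy correctly. For $\F=\T^\bot$: since $\TT[1]$ is a direct summand of $\FF$, the split inclusion $\TT[1]\to\FF$ is a morphism in $\Hom(\TT[1],\FF)\cong\Hom(\TT,\FF[-1])=0$ by the $s$-condition; hence $\id_{\TT[1]}$ factors through $0$, so $\TT[1]=0$ and $\ZZ\cong\FF\in\F$. Dually, for $\T={}^\bot\F$: the split projection $\TT\to\FF[-1]$ lies in $\Hom(\TT,\FF[-1])=0$, so $\FF[-1]=0$ and $\ZZ\cong\TT\in\T$. (Equivalently: $\End(\TT[1])$ is a direct summand of $\Hom(\TT[1],\FF)=0$.) With this substitution both halves are correct, and your derivation of closure under extensions and direct summands from the two equalities is fine.
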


The notion of $s$-torsion pairs was introduced in \cite{AET} for arbitrary extriangulated categories with a negative first extension. In particular, $s$-torsion pairs in the triangulated category $\D$ are exactly $t$-structures. Hence, applying \cite[Theorem~3.9]{AET} to $\D^{[-(m-1),0]}$, we get the following bijection.

\begin{proposition}\label{prop:bi}
    There is a bijection between 
    \begin{itemize}
        \item the set of bounded $t$-structures $(\C^{\leq0},\C^{\geq 0})$ on $\D$, satisfying $\mathcal{D}^{\leq -m}\subseteq\C^{\leq0}\subseteq\mathcal{D}^{\leq 0}$, and
        \item  the set of $s$-torsion pairs in $\mathcal{D}^{[-(m-1),0]}$, 
    \end{itemize}
    by the map $$(\C^{\leq0},\C^{\geq 0})\mapsto(\C^{\leq0}\cap\mathcal{D}^{[-(m-1),0]},\C^{\geq 1}\cap \mathcal{D}^{[-(m-1),0]}),$$
    with inverse
    $$(\T,\F)\mapsto (\mathcal{D}^{\leq -m}*\T,\F[1]*\mathcal{D}^{\geq 0}).$$
\end{proposition}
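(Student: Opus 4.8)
The plan is to apply \cite[Theorem~3.9]{AET} directly to the extriangulated category $\D^{[-(m-1),0]}$ equipped with the negative first extension structure described in \Cref{extri}, and then translate the abstract statement of that theorem into the concrete language of bounded $t$-structures on $\D$. Recall that \cite{AET} establishes, for an extriangulated category $\mathcal{E}$ with negative first extension, a bijection between $s$-torsion pairs in $\mathcal{E}$ and certain "bounded co-$t$-structures" or intermediate $t$-structures; here the ambient category that plays the role of the derived category is $\D$ itself, and the relevant intermediate $t$-structures are exactly those squeezed between $(\D^{\leq 0},\D^{\geq 0})$ and its $m$-fold shift $(\D^{\leq -m},\D^{\geq -m})$. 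So the first step is to verify the hypotheses of \cite[Theorem~3.9]{AET}: that $\D^{[-(m-1),0]}$ is extriangulated with a negative first extension (done in \Cref{extri}), and that the boundedness conditions needed in \cite{AET} hold, which follows from boundedness of the $t$-structure on $\D$.

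Next I would unwind the two maps and check they are mutually inverse and well-defined. For the forward map: given a bounded $t$-structure $(\C^{\leq 0},\C^{\geq 0})$ with $\D^{\leq -m}\subseteq\C^{\leq 0}\subseteq\D^{\leq 0}$, set $\T=\C^{\leq 0}\cap\D^{[-(m-1),0]}$ and $\F=\C^{\geq 1}\cap\D^{[-(m-1),0]}$. One must check $(\T,\F)$ is a torsion pair in $\D^{[-(m-1),0]}$: the Hom-vanishing $\Hom(\T,\F)=0$ is immediate from $\Hom(\C^{\leq 0},\C^{\geq 1})=0$, and the decomposition $\D^{[-(m-1),0]}=\T*\F$ comes from truncating an object $\XX\in\D^{[-(m-1),0]}$ with respect to the $\C$-$t$-structure and checking, using $\D^{\leq -m}\subseteq\C^{\leq 0}\subseteq\D^{\leq 0}$, that both truncation pieces again lie in $\D^{[-(m-1),0]}$. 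Then $\Hom(\T,\F[-1])=0$ follows because $\F[-1]\subseteq\C^{\geq 2}\subseteq\C^{\geq 1}$ and $\T\subseteq\C^{\leq 0}$. For the inverse map: given an $s$-torsion pair $(\T,\F)$, set $\C^{\leq 0}=\D^{\leq -m}*\T$ and $\C^{\geq 1}=\F[1]*\D^{\geq 1}$ (equivalently $\C^{\geq 0}=\C^{\geq 1}[1]=\F*\D^{\geq 0}$), and verify the three $t$-structure axioms — the Hom-vanishing $\Hom(\C^{\leq 0},\C^{\geq 1})=0$ breaks into four Hom groups between $\D^{\leq -m}$, $\T$, $\F[1]$, $\D^{\geq 1}$, each of which vanishes either by the $t$-structure axioms on $\D$, by the definition of torsion pair, or by the $s$-torsion condition $\Hom(\T,\F[-1])=0$ (after a shift). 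The decomposition axiom $\D=\C^{\leq 0}*\C^{\geq 1}$ is assembled from the $\D$-truncation together with the torsion decomposition in $\D^{[-(m-1),0]}$, and the intermediacy $\D^{\leq -m}\subseteq\C^{\leq 0}\subseteq\D^{\leq 0}$ is clear from the construction. Finally, composing the two maps in both orders and checking they are the identity is a routine diagram chase using \Cref{lem:summand} (so that $\T={}^\bot\F$, $\F=\T^\bot$) to recognize the reconstructed subcategories.

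The main obstacle, and the place where care is genuinely needed rather than bookkeeping, is showing that the $\C$-truncation of an object of $\D^{[-(m-1),0]}$ stays inside $\D^{[-(m-1),0]}$ — i.e. that the torsion decomposition produced by restricting the $\C$-$t$-structure actually lands in the extended heart — and dually that $\D^{\leq -m}*\T$ and $\F[1]*\D^{\geq 1}$ really do define a $t$-structure (in particular that the candidate aisle $\C^{\leq 0}=\D^{\leq -m}*\T$ is closed under $[1]$). This closure-under-shift point is where the $s$-torsion hypothesis is essential: $(\D^{\leq -m}*\T)[1]=\D^{\leq -m-1}*\T[1]$ must be shown to lie in $\D^{\leq -m}*\T$, which amounts to re-truncating $\T[1]$ and using $\Hom(\T,\F[-1])=0$. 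All of this is precisely what \cite[Theorem~3.9]{AET} provides once the extriangulated-with-negative-first-extension setup is in place, so in the write-up I would state the proposition as an application of that theorem and include only the short dictionary translating "$s$-torsion pair in the extriangulated category $\D^{[-(m-1),0]}$" and its associated intermediate $t$-structure into the explicit formulas above, rather than redo the proof of \cite{AET} from scratch.
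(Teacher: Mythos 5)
Your proposal is correct and takes the same route as the paper: the paper's entire proof of \Cref{prop:bi} is a direct application of \cite[Theorem~3.9]{AET} to the extriangulated category $\D^{[-(m-1),0]}$ with the negative first extension structure from \Cref{extri}, exactly as you propose. The additional verification details you sketch (intermediacy, closure of the aisle under shift via the $s$-torsion condition) are the content of the cited theorem and are consistent with it.
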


The following result shows that the vanishing condition on the first negative extension in the definition of $s$-torsion pairs is equivalent to the vanishing condition on all negative extensions.

\begin{corollary}\label{cor:positive}
    Let $(\T,\F)$ be an $s$-torsion pair in $\D^{[-(m-1),0]}$. Then 
    \begin{equation}\label{eq:neg}
        \Hom(\T,\F[j])=0,\ j<0.
    \end{equation}
\end{corollary}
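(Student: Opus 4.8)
The plan is to prove the vanishing $\Hom(\T,\F[j])=0$ for all $j<0$ by downward induction on $j$, with the base case $j=-1$ being exactly the defining condition of an $s$-torsion pair. So fix $j\leq -2$ and assume $\Hom(\T,\F[j+1])=0$; I want to deduce $\Hom(\T,\F[j])=0$. The key input will be \Cref{prop:bi}: the $s$-torsion pair $(\T,\F)$ corresponds to a bounded $t$-structure $(\C^{\leq 0},\C^{\geq 0})$ on $\D$ with $\D^{\leq-m}\subseteq\C^{\leq 0}\subseteq\D^{\leq 0}$, and under this correspondence $\F[1]*\D^{\geq 0}=\C^{\geq 1}$ and $\D^{\leq-m}*\T=\C^{\leq 0}$. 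In particular $\F\subseteq\C^{\geq 1}$, equivalently $\F[j]\subseteq\C^{\geq 1-j}\subseteq\C^{\geq 3}$ (since $j\leq -2$), and $\T\subseteq\C^{\leq 0}$.

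First I would pin down the vanishing more carefully using the $t$-structure axioms. Since $\T\subseteq\C^{\leq 0}$ and $\F\subseteq\C^{\geq 1}$, for $\TT\in\T$ and $\FF\in\F$ we have $\TT\in\C^{\leq 0}$ and $\FF[j]\in\C^{\geq 1-j}$. If $1-j\geq 2$, i.e. $j\leq -1$, then $\FF[j]\in\C^{\geq 2}=\C^{\geq 1}[-1]$, so by the $t$-structure axiom $\Hom(\C^{\leq 0},\C^{\geq 1}[-1])=0$ we would already get $\Hom(\TT,\FF[j])=0$ directly. Wait — this actually settles all $j\leq -1$ in one stroke: for $j\leq -1$ we have $\FF[j]=\FF[j+1][-1]$ with $\FF[j+1]\in\C^{\geq 1-(j+1)}=\C^{\geq -j}\subseteq\C^{\geq 1}$ (as $-j\geq 1$), hence $\FF[j]\in\C^{\geq 1}[-1]$ and the axiom $\Hom(\C^{\leq 0},\C^{\geq 1}[-1])=0$ applies with $\TT\in\C^{\leq 0}$. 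So no induction is even needed; the single observation that $\F\subseteq\C^{\geq 1}$ and $\T\subseteq\C^{\leq 0}$, combined with the heart-axiom closure of $\C^{\geq 1}$ under negative shifts staying inside $\C^{\geq 1}$ shifted, does it.

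To write this up cleanly I would: (i) invoke \Cref{prop:bi} to obtain $(\C^{\leq 0},\C^{\geq 0})$ with $\T\subseteq\C^{\leq 0}\cap\D^{[-(m-1),0]}$ and $\F\subseteq\C^{\geq 1}\cap\D^{[-(m-1),0]}$; (ii) note that $\C^{\geq 1}$ is closed under $[-1]$ up to the shift bookkeeping, precisely that $\C^{\geq 1}[-1]=\C^{\geq 2}\subseteq\C^{\geq 1}$, so for any $j\leq -1$, $\FF[j]\in\C^{\geq 1-j}\subseteq\C^{\geq 2}=\C^{\geq 1}[-1]$; (iii) apply the orthogonality axiom $\Hom(\C^{\leq 0},\C^{\geq 1}[-1])=0$ to conclude $\Hom(\TT,\FF[j])=0$. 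The main (and only mild) obstacle is just getting the index shifts in \Cref{prop:bi} and in \Cref{trun1} consistent — that $\C^{\geq p}=\C^{\geq 0}[-p]$, that $\F\subseteq\C^{\geq 1}$ means $\F[j]\subseteq\C^{\geq 1}[-j]=\C^{\geq 1-j}$, and that $1-j\geq 2$ when $j\leq -1$, so everything lands where the $t$-structure axiom can be invoked. I would present it as a short direct argument rather than an induction.
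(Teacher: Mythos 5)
Your final argument is correct and is essentially the paper's own proof: invoke \Cref{prop:bi} to get the $t$-structure $(\C^{\leq 0},\C^{\geq 0})$ with $\T\subseteq\C^{\leq 0}$ and $\F\subseteq\C^{\geq 1}$, observe that $\F[j]\subseteq\C^{\geq 1-j}\subseteq\C^{\geq 1}$ for $j<0$ since the co-aisle is closed under $[-1]$, and conclude by the orthogonality axiom. The opening induction scaffolding is unnecessary, as you yourself note midway through.
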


\begin{proof}
    By \Cref{prop:bi}, there is a bounded $t$-structure $(\C^{\leq 0},\C^{\geq 0})$ such that $\T=\C^{\leq 0}\cap\D^{[-(m-1),0]}$ and $\F=\C^{\geq 1}\cap\D^{[-(m-1),0]}$. Since $\C^{\geq 1}$ is closed under $[-1]$, we have $\F[j]\subseteq\C^{\geq 1}$ for any $j<0$. So we have $\Hom(\T,\F[j])=0$.
\end{proof}

\begin{remark}\label{rmk:G1}
    A torsion pair in the sense of \cite[Definition~3.14~(1)]{G} is a pair $(\T,\F)$ of subcategories satisfying $\T={}^\bot\F$ and $\F=\T^\bot$. A torsion pair in the sense of \cite{G} is called positive if the equality~\eqref{eq:neg} holds. So by \Cref{lem:summand} and \Cref{cor:positive}, an $s$-torsion pair is always a positive torsion pair. However, the converse is not true in general, see \cite[Example~5.3]{G}.
\end{remark}

The following is a generalization of Happel-Reiten-Smal{\o} tilting.

\begin{theorem}\label{prop:hrs}
    Let $(\T,\F)$ be an $s$-torsion pair in $\D^{[-(m-1),0]}$ and let $(\C^{\leq 0},\C^{\geq 0})$ be the corresponding bounded $t$-structure as in \Cref{prop:bi}. Then $\F[m]*\T$ is the $m$-extended heart $\C^{[-(m-1),0]}$ of $(\C^{\leq 0},\C^{\geq 0})$. In particular, $(\F[m],\T)$ is an $s$-torsion pair in $\C^{[-(m-1),0]}$.
\end{theorem}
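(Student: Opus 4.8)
The plan is to identify the $m$-extended heart $\C^{[-(m-1),0]}$ explicitly using the description of the $t$-structure $(\C^{\leq 0},\C^{\geq 0})$ provided by \Cref{prop:bi}, namely $\C^{\leq 0}=\D^{\leq -m}*\T$ and $\C^{\geq 1}=\F[1]*\D^{\geq 0}$ (equivalently $\C^{\geq 0}=\F*\D^{\geq -1}$). First I would compute $\C^{\leq 0}\cap\C^{\geq -(m-1)}$. Since $\C^{\geq -(m-1)}=\C^{\geq 0}[m-1]=\F[m-1]*\D^{\geq -m}$, I expect to show the intersection equals $\F[m]*\T$ by a two-sided containment argument. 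For the inclusion $\F[m]*\T\subseteq\C^{[-(m-1),0]}$, I would check that $\F[m]\subseteq\C^{[-(m-1),0]}$ and $\T\subseteq\C^{[-(m-1),0]}$ separately — for $\T$ this is clear since $\T\subseteq\C^{\leq 0}$ by definition and $\T\subseteq\D^{\geq -(m-1)}\subseteq\C^{\geq -(m-1)}$ (using $\D^{\leq 0}\subseteq\C^{\leq 0}[m]$, i.e. $\D^{\geq -(m-1)}\subseteq\C^{\geq -(m-1)}$, which follows from $\C^{\leq -m}\subseteq\D^{\leq -m}$); for $\F[m]$ one uses $\F\subseteq\D^{\geq -(m-1)}$ so $\F[m]\subseteq\D^{\geq -(2m-1)}$ and $\F[m]\subseteq\C^{\geq 1}[m-1]=\C^{\geq -(m-2)}\subseteq\C^{\geq -(m-1)}$, together with $\F[m]\subseteq\D^{\leq 0}[m]$... — here I must be careful. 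The cleaner route is: $\F\subseteq\D^{[-(m-1),0]}$, so $\F[m]\subseteq\D^{[-(2m-1),-m]}\subseteq\D^{\leq -m}\subseteq\C^{\leq 0}$, and $\F[m]\subseteq\C^{\geq -(m-1)}$ as above; hence $\F[m]\subseteq\C^{[-(m-1),0]}$. Then since $\C^{[-(m-1),0]}$ is closed under extensions, $\F[m]*\T\subseteq\C^{[-(m-1),0]}$.

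For the reverse inclusion I would take $\ZZ\in\C^{[-(m-1),0]}$ and produce a triangle $\XX\to\ZZ\to\YY\to\XX[1]$ with $\XX\in\F[m]$, $\YY\in\T$. The natural candidate comes from the $t$-structure truncation: applying $\sigma^{\leq -m}_\C$ and $\sigma^{\geq -(m-1)}_\C$ with respect to $(\C^{\leq 0},\C^{\geq 0})$ gives a triangle with the $\C$-cohomologically-low part $\sigma^{\leq -m}_\C\ZZ$ and high part $\sigma^{\geq -(m-1)}_\C\ZZ$. Since $\ZZ\in\C^{\leq 0}$, the truncation $\sigma^{\leq -m}_\C\ZZ$ lies in $\C^{[?,-m]}$; I would need to show it lies in $\F[m]$, and that $\sigma^{\geq -(m-1)}_\C\ZZ$ lies in $\T$. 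Identifying $\sigma^{\geq -(m-1)}_\C\ZZ$ with an object of $\T$ should follow from $\T=\C^{[-(m-1),0]}\cap\D^{\leq 0}$-type reasoning: this truncation lies in $\C^{\geq -(m-1)}$, and I must check it also lies in $\C^{\leq 0}\cap\D^{\geq -(m-1)}=\T$; membership in $\D^{\geq -(m-1)}$ uses that $\ZZ\in\D^{\geq -(m-1)}$ is preserved, plus a long-exact-sequence / octahedron argument comparing the $\D$- and $\C$-truncations. Dually $\sigma^{\leq -m}_\C\ZZ\in\C^{\leq -m}$, and one shows it lands in $\F[m]$, i.e. in $\D^{\leq 0}[m]\cap(\F*\D^{\geq 0})[m]$; the content is that it is concentrated in $\D$-degrees $[-(2m-1),-m]$ and its appropriate cohomology sits in $\F$. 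I anticipate the main obstacle is exactly this bookkeeping: verifying that the $\C$-truncation of an object in the $m$-extended $\C$-heart stays inside the correct range of $\D$-degrees and that the relevant pieces land in $\T$ resp. $\F$, rather than in the larger $\D^{\leq -m}$ resp. $\F[1]*\D^{\geq 0}$. This is where an octahedral-axiom argument interleaving the two sets of truncation functors, together with \Cref{cor:positive} (to kill cross terms $\Hom(\T,\F[j])$ for $j<0$), will be needed.

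Once $\C^{[-(m-1),0]}=\F[m]*\T$ is established, the ``in particular'' clause is almost immediate: $(\F[m],\T)$ is a torsion pair in $\C^{[-(m-1),0]}$ since the decomposition statement is exactly $\C^{[-(m-1),0]}=\F[m]*\T$, and the Hom-orthogonality $\Hom(\F[m],\T)=0$ follows from $\Hom(\F,\T[-m])=0$, which holds because $\T\subseteq\C^{\leq 0}$ forces $\T[-m]\subseteq\C^{\leq -m}\subseteq\D^{\leq -m}$, while $\F\subseteq\D^{\geq -(m-1)}$, and $\Hom(\D^{\geq -(m-1)},\D^{\leq -m})=0$ since this is $\Hom(\D^{\geq 0},\D^{\leq -1})=0$, a consequence of the $t$-structure axioms. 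Finally, to see it is an $s$-torsion pair one needs $\Hom(\F[m],\T[-1])=0$, i.e. $\Hom(\F[m+1],\T)=0$; since $\T\subseteq\C^{\leq 0}$ and $\F[m+1]\subseteq\C^{\geq 1}[m+1]$... more directly, $\F[m+1]\subseteq\D^{[-2m,-(m+1)]}\subseteq\D^{\leq -(m+1)}$ and again $\Hom(\D^{\leq -(m+1)},\T)=0$ because $\T\subseteq\D^{\geq -(m-1)}$ and $\Hom(\D^{\leq -(m+1)},\D^{\geq -(m-1)})=0$. Thus $(\F[m],\T)$ is an $s$-torsion pair in $\C^{[-(m-1),0]}$, completing the proof.
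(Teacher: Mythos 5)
Your inclusion $\F[m]*\T\subseteq\C^{[-(m-1),0]}$ and the ``in particular'' clause are essentially correct and match the paper's reasoning, apart from a shift-sign slip: from $\T\subseteq\C^{\leq 0}$ you get $\T[m]\subseteq\C^{\leq -m}$, not $\T[-m]\subseteq\C^{\leq -m}$; the orthogonality $\Hom(\F[m],\T)=0$ should instead be read off from $\F[m]\subseteq\D^{\leq -m}$ and $\T\subseteq\D^{\geq -(m-1)}$, which is what the paper does.

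The reverse inclusion, however, is the heart of the theorem, and the triangle you propose to use there does not exist in any useful form. For $\ZZ\in\C^{[-(m-1),0]}$ you already have $\ZZ\in\C^{\geq -(m-1)}=\C^{\geq -m+1}$, so the truncation triangle of $\ZZ$ at $-m$ \emph{with respect to} $(\C^{\leq 0},\C^{\geq 0})$ is the trivial one: $\sigma^{\leq -m}_{\C}\ZZ=0$ and $\sigma^{\geq -(m-1)}_{\C}\ZZ=\ZZ$. Hence ``showing $\sigma^{\geq -(m-1)}_{\C}\ZZ\in\T$'' would amount to showing $\ZZ\in\T$, which fails already for $0\neq\ZZ\in\F[m]$. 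The underlying conceptual issue is that the two pieces $\F[m]$ and $\T$ are \emph{not} separated by $\C$-cohomological degree (both are spread across all of $\C^{[-(m-1),0]}$); they are separated by $\D$-degree, with $\F[m]\subseteq\D^{[-(2m-1),-m]}$ and $\T\subseteq\D^{[-(m-1),0]}$. The paper instead takes the two triangles coming from the presentations $\C^{\leq 0}=\D^{\leq -m}*\T$ and $\C^{\geq -(m-1)}=\F[m]*\D^{\geq -(m-1)}$, namely $\YY\to\XX\to\ZZ$ with $\YY\in\D^{\leq -m}$, $\ZZ\in\T$, and $\YY'\to\XX\to\ZZ'$ with $\YY'\in\F[m]$, $\ZZ'\in\D^{\geq -(m-1)}$, combines them by the octahedral axiom, and deduces from a Hom-vanishing that $\YY$ is a direct summand of $\YY'$, hence lies in $\F[m]$ by \Cref{lem:summand}. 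Your idea can be salvaged by using the truncation with respect to the \emph{original} $t$-structure: the triangle $\sigma^{\leq -m}\ZZ\to\ZZ\to\sigma^{\geq -(m-1)}\ZZ$ does the job, since $\sigma^{\geq -(m-1)}\ZZ\in\C^{\leq 0}\cap\D^{[-(m-1),0]}=\T$ (the aisle $\C^{\leq 0}$ contains $\ZZ$ and $(\sigma^{\leq -m}\ZZ)[1]\in\D^{\leq -(m+1)}$ and is extension-closed) and $\sigma^{\leq -m}\ZZ\in\C^{\geq -(m-1)}\cap\D^{[-(2m-1),-m]}=\F[m]$ (the coaisle $\C^{\geq -(m-1)}$ contains $\ZZ$ and $(\sigma^{\geq -(m-1)}\ZZ)[-1]$ and is extension-closed) --- but as written, with the $\C$-truncation, the argument has no content.
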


\begin{proof}
    Since $\T,\F\subseteq\D^{[-(m-1),0]}$, we have $\T\subseteq \mathcal{D}^{\geq -(m-1)}$ and $\F[m]\subseteq\D^{\leq -m}$. So we have $\Hom(\F[m],\T)=0$. Thus, we only need to show $\F[m]*\T=\C^{[-(m-1),0]}$. On the one hand, we have
    $$\begin{array}{rcl}
       \F[m]*\T &\subseteq & \left(\mathcal{D}^{\leq -m}*\T\right)\cap\left(\F[m]*\mathcal{D}^{\geq -(m-1)}\right)\\
       & = & (\mathcal{D}^{\leq -m}*\T)\cap\left((\F[1]*\mathcal{D}^{\geq 0})[m-1]\right) \\
       & = & \C^{[-(m-1),0]}.
    \end{array}$$
    Conversely, for any object $\XX$ in $\C^{[-(m-1),0]}=(\mathcal{D}^{\leq -m}*\T)\cap(\F[m]*\mathcal{D}^{\geq -(m-1)})$, there are triangles
    \[\YY\xrightarrow{f}\XX\xrightarrow{g}\ZZ\to\YY[1]\]
    and
    \[\YY'\xrightarrow{h}\XX\to\ZZ'\to\YY'[1]\]
    with $\YY\in \mathcal{D}^{\leq -m}$, $\ZZ\in\T$, $\YY'\in\F[m]$ and $\ZZ'\in \mathcal{D}^{\geq -(m-1)}$. Since $\Hom(\F[m],\T)=0$, $g\circ h=0$. So $h$ factors through $f$. Hence, by the octahedral axiom, there is the following commutative diagram of triangles
    \[\xymatrix{
    &\YY'\ar@{=}[r]\ar[d]&\YY'\ar[d]^{h}\\
    \ZZ[-1]\ar[r]\ar@{=}[d]&\YY\ar[d]^{a}\ar[r]^{f}&\XX\ar[r]^{g}\ar[d]&\ZZ\ar@{=}[d]\\
    \ZZ[-1]\ar[r]&\YY''\ar[r]\ar[d]&\ZZ'\ar[d]\ar[r]&\ZZ\\
    &\YY'[1]\ar@{=}[r]&\YY'[1]
    }
    \]
    By the triangle in the third row of the diagram, we have $$\YY''\in \add\ZZ[-1]*\add\ZZ'\subseteq\T[-1]\ast\D^{\geq -(m-1)}\subseteq \D^{\geq -(m-1)}.$$
    So $\Hom(\YY,\YY'')=0$, which implies that the morphism $a:\YY\to\YY''$ in the above diagram is zero. It follows that $\YY$ is a direct summand of $\YY'$ and hence belongs to $\F[m]$ by \Cref{lem:summand}. Therefore, $\XX$ is an object in $\F[m]*\T$.
\end{proof}

To give an alternative description for a torsion pair to be an $s$-torsion pair, we introduce the notions of $n$-factors and $n$-subobjects, which will also play an important role in the study of torsion theory and $\tau$-tilting theory in $m$-extended module categories in the remaining sections. We refer to \cite{W1,W2} for similar notions for module categories.

\begin{definition}\label{def:sfac}
    Let $\X$ be a subcategory of $\D^{[-(m-1),0]}$ and $n$ be a positive integer. 
    \begin{enumerate}
    \item An object $\ZZ$ in $\D^{[-(m-1),0]}$ is called an $n$-factor of $\X$ provided that there are $n$ many triangles
    \begin{equation}\label{eq:fac0}
        \ZZ_{i}\to\XX_{i}\to\ZZ_{i-1}\to\ZZ_{i}[1],\ 1\leq i\leq n,
    \end{equation}
    with $\ZZ_0=\ZZ,\ZZ_1,\cdots,\ZZ_n\in\D^{[-(m-1),0]}$ and $\XX_1,\cdots,\XX_n\in\X$. Denote by $\Fac_n(\X)$ the subcategory of $\D^{[-(m-1),0]}$ consisting of all $n$-factors of $\X$.
    \item An object $\ZZ$ in $\D^{[-(m-1),0]}$ is called an $n$-subobjects of $\X$ provided that there are $n$ many triangles
    \begin{equation}\label{eq:sub0}
        \ZZ_{i-1}\to\XX_{i}\to\ZZ_{i}\to\ZZ_{i-1}[1],\ 1\leq i\leq n,
    \end{equation}
    with $\ZZ_0=\ZZ,\ZZ_1,\cdots,\ZZ_n\in\D^{[-(m-1),0]}$ and $\XX_1,\cdots,\XX_n\in\X$. Denote by $\Sub_n(\X)$ the subcategory of $\D^{[-(m-1),0]}$ consisting of all $n$-subobjects of $\X$.
    \end{enumerate}
\end{definition}

The notion of 1-factors (resp. 1-subobjects) coincides with the usual notion of factors (resp. subobjects) in an extriangulated category (at least when the extriangulated category is an abelian category).

\begin{example}\label{exm:fac}
    Let $\X$ be a subcategory of $\D^{[-(m-1),0]}$ and $n$ be a positive integer.
    \begin{enumerate}
        \item For any $\YY\in\D^{[-(m-1),0]}$, if $\YY[n]$ is also an object in $\D^{[-(m-1),0]}$, then $\YY[n]\in \Fac_{n}(\X)$. Note that $\YY[n]\in\D^{[-(m-1),0]}$ implies $\YY\in\D^{[-(m-1)+n,n]}$. So $\YY\in\D^{[-(m-1),0]}\cap\D^{[-(m-1)+n,n]}=\D^{[-(m-1)+n,0]}$. Hence, $\YY[i]\in\D^{[-(m-1),0]}$ for any $0\leq i\leq n$. Thus, the assertion follows directly from the triangles
        $$\YY[i-1]\to 0\to \YY[i]\to \YY[i],\ 1\leq i\leq n.$$
        \item Similarly, for any $\ZZ\in\Fac_{n}(\X)$ and any positive integer $l$, if $\ZZ[l]$ is also an object in $\D^{[-(m-1),0]}$, then $\ZZ[l]\in \Fac_{n+l}(\X)$.
    \end{enumerate}
\end{example}

\begin{remark}
    For the relationship between $n$-factors and $(n+1)$-factors, on the one hand, by the construction, we have
    \begin{equation}\label{eq:rec}
        \Fac_{n+1}(\X)=(\X\ast\Fac_{n}(\X)[1])\cap\D^{[-(m-1),0]}.
    \end{equation}
    On the other hand, one has 
    \begin{equation}\label{eq:s subset s-1}
        \Fac_{n+1}(\X)\subseteq\Fac_{n}(\X),
    \end{equation}
    because an object admitting $n+1$ many triangles in \eqref{eq:fac0} certainly admits $n$ many such triangles.
\end{remark}

The following easy observation is useful.

\begin{lemma}\label{lem:fac1}
    Let $\X$ be a subcategory of $\D^{[-(m-1),0]}$. Then for any $\XX\in\X$, we have $H^{[-(m-2),0]}(\XX)\in \Fac_{m}(\X)$.
\end{lemma}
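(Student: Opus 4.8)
The plan is to exhibit the required $m$ triangles of the form \eqref{eq:fac0} with $\ZZ_0 = H^{[-(m-2),0]}(\XX)$ and $\ZZ_m = 0$, by repeatedly applying the truncation functors attached to the $t$-structure. For $\XX \in \X \subseteq \D^{[-(m-1),0]}$, consider the canonical truncation triangle
\[
\sigma^{\geq -(m-2)}\XX \to \XX \to \sigma^{\leq -(m-1)}\XX \to (\sigma^{\geq -(m-2)}\XX)[1].
\]
Since $\XX \in \D^{\leq 0}\cap\D^{\geq -(m-1)}$, we have $\sigma^{\leq -(m-1)}\XX \in \D^{[-(m-1),-(m-1)]} = \mathcal{H}[m-1]$ and $\sigma^{\geq -(m-2)}\XX = H^{[-(m-2),0]}(\XX) \in \D^{[-(m-2),0]} \subseteq \D^{[-(m-1),0]}$. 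Rotating this triangle gives
\[
(\sigma^{\leq -(m-1)}\XX)[-1] \to \sigma^{\geq -(m-2)}\XX \to \XX \to \sigma^{\leq -(m-1)}\XX,
\]
and here $(\sigma^{\leq -(m-1)}\XX)[-1] \in \mathcal{H}[m-2] \subseteq \D^{[-(m-1),0]}$. This is exactly a triangle of the shape \eqref{eq:fac0} with $i=1$, $\XX_1 = \XX \in \X$, $\ZZ_0 = H^{[-(m-2),0]}(\XX)$ and $\ZZ_1 = (\sigma^{\leq -(m-1)}\XX)[-1]$.

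Next I would continue the filtration on $\ZZ_1 = (\sigma^{\leq -(m-1)}\XX)[-1]$, which lies in $\mathcal{H}[m-2]$. For an object $\YY \in \mathcal{H}[k]$ with $1 \le k \le m-1$, the triangle $\YY[-1] \to 0 \to \YY \to \YY$ is of the form \eqref{eq:fac0} provided both $\YY$ and $\YY[-1]$ lie in $\D^{[-(m-1),0]}$; here $\YY \in \mathcal{H}[k] \subseteq \D^{[-(m-1),0]}$ and $\YY[-1] \in \mathcal{H}[k-1] \subseteq \D^{[-(m-1),0]}$ since $0 \le k-1$. Thus, starting from $\ZZ_1 \in \mathcal{H}[m-2]$, I take $\XX_2 = 0 \in \X$ and $\ZZ_2 = \ZZ_1[-1] \in \mathcal{H}[m-3]$ (when $m \geq 3$), and iterate: $\ZZ_i = \ZZ_1[-(i-1)] \in \mathcal{H}[m-1-i]$ with $\XX_i = 0$ for $2 \le i \le m-1$, and finally $\ZZ_{m-1} \in \mathcal{H}[0] = \mathcal{H}$, so $\ZZ_{m-1}[-1] \in \mathcal{H}[-1]$. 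To close off the filtration at step $m$ I use the triangle $\ZZ_{m-1} \to \ZZ_{m-1} \to 0 \to \ZZ_{m-1}[1]$, i.e. $\XX_m = \ZZ_{m-1} \in \mathcal{H} \subseteq \D^{[-(m-1),0]}$ — but wait, this requires $\XX_m \in \X$, which need not hold. Instead I set $\ZZ_{m-1}$ itself as the final object: the triangle $\ZZ_{m-1} \xrightarrow{\id} \ZZ_{m-1} \to 0 \to \ZZ_{m-1}[1]$ has middle term $0 \in \X$ only if we relabel, so more cleanly, I stop the recursion at $n=m$ by noting $\Fac_{m}(\X)$ only requires $\ZZ_m \in \D^{[-(m-1),0]}$, not $\ZZ_m = 0$; so I take $\ZZ_{m-1} \in \mathcal{H}$ and the last triangle $0 \to \ZZ_{m-1} \to \ZZ_{m-1} \to 0$, wait—let me instead simply observe $\ZZ_{m-1} \in \mathcal{H} \subseteq \D^{[-(m-1),0]}$ and that the chain of $m-1$ triangles already constructed shows $\ZZ_0 \in \Fac_{m-1}(\X)$, then apply \eqref{eq:s subset s-1}... no, that goes the wrong way. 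The honest fix: run the $0$-middle triangles one more step, $\ZZ_{m-1}[-1] \to 0 \to \ZZ_{m-1} \to \ZZ_{m-1}$, using $\ZZ_{m-1} \in \mathcal{H}$, $\ZZ_{m-1}[-1] \in \mathcal{H}[-1]$; but $\mathcal{H}[-1] \not\subseteq \D^{[-(m-1),0]}$. So the filtration genuinely has exactly $m$ nonzero steps only when the first uses $\XX$ and absorbs the top cohomology; after that there are $m-1$ degrees to strip, giving $1 + (m-1) = m$ triangles with $\ZZ_m = \ZZ_{m-1}[-(m-1)]$... that also leaves the $t$-structure range.

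The cleanest route, and the one I would actually write, reverses the order: use \eqref{eq:rec} and induction on $m$, or better, build the filtration so that $H^{[-(m-2),0]}(\XX)$ is a $1$-factor of $\X$ with cokernel-part in $\mathcal{H}[m-2]$, which is itself an $(m-2)$-fold shift and hence by \Cref{exm:fac}(1) an $(m-1)$-factor of any $\X$ (taking $\YY \in \D^{[-(m-1),0]}$ with $\YY[m-1] = $ that object)... Concretely: $(\sigma^{\leq-(m-1)}\XX)[-1] \in \mathcal{H}[m-2]$, and writing it as $\YY'[m-1]$ for $\YY' \in \mathcal{H}[-1]$ fails, but writing $\ZZ_1 = \YY''[m-2]$ with $\YY'' \in \mathcal{H}$: then $\YY''[m-2] \in \D^{[-(m-1),0]}$ and by \Cref{exm:fac}(1) applied with "$n$" $= m-2$... one needs $\YY''[m-2] \in \Fac_{m-2}$, i.e. $\YY''[\text{something}]$, shift count works: $\YY'' \in \mathcal{H} = \D^{[0,0]}$, and $\D^{[0,0]} \cap \D^{[-(m-1)+(m-2),m-2]}$... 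Rather than belabor this, I will phrase it as: the first triangle (with $\XX_1 = \XX$) shows $\ZZ_0 \in \X * \mathcal{H}[m-2]$, and $\mathcal{H}[m-2] \subseteq \Fac_{m-1}(\X)$ by \Cref{exm:fac}(1) since every object of $\mathcal{H}[m-2]$ is of the form $\WW[m-1]$... no. The correct statement: $\WW \in \mathcal{H}[m-2]$ means $\WW[-(m-2)] \in \mathcal{H} \subseteq \D^{[-(m-1),0]}$, and then $\WW = (\WW[-(m-2)])[m-2] \in \Fac_{m-2}(\X)$ by \Cref{exm:fac}(1) — there $n = m-2$, $\YY = \WW[-(m-2)]$, $\YY[n] = \WW \in \D^{[-(m-1),0]}$: yes this applies. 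Hence by \eqref{eq:rec}, $\ZZ_0 \in \X * \Fac_{m-2}(\X)[1] \cap \D^{[-(m-1),0]} \subseteq \Fac_{m-1}(\X)$. But the claim is $\Fac_m$, which is \emph{stronger} than $\Fac_{m-1}$ by \eqref{eq:s subset s-1}; so I need one more triangle. Inserting the trivial triangle $\ZZ_0 \to \ZZ_0 \to 0$ at the end does not increase the count in a useful way since $0 \notin$ gives $\ZZ_m = 0 \in \D^{[-(m-1),0]}$ — actually $0 \to \ZZ_0 \xrightarrow{\id} \ZZ_0 \to 0$ with middle term $\ZZ_0 \in \X$? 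No, $\ZZ_0 = H^{[-(m-2),0]}(\XX) \notin \X$ in general. The resolution: the degree-stripping triangles $\WW[i-1] \to 0 \to \WW[i] \to \WW[i]$ number exactly $m-1$ (stripping $\WW = \WW[0]$ up to $\WW[m-2]$, i.e. indices $i = 1, \ldots, m-1$ wait $\WW[m-2]$ from $\WW$ is $m-2$ steps), plus the initial $\XX$-triangle is $1$, total $m-1$; and one absorbs the final $\mathcal{H}$-object via its own triangle $\WW \xrightarrow{\id} \WW \to 0$ — dead end again.

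Given the time I have spent, the \textbf{main obstacle} is precisely this bookkeeping of the triangle count: producing exactly $m$ triangles \eqref{eq:fac0} (not $m-1$) for $H^{[-(m-2),0]}(\XX)$. I expect the intended argument is the direct filtration $\XX = \XX_1, \XX_2 = \cdots = \XX_m = 0$ where one peels cohomology degrees $-(m-1), -(m-2), \ldots, 0$ one at a time — that is $m$ peels, with the first peel using the $t$-structure triangle for $\XX$ and producing $\ZZ_1 \in \mathcal{H}[m-2] * \cdots$, and each subsequent peel a triangle with zero middle term as in \Cref{exm:fac}(1), the last one landing $\ZZ_m = H^{[-(m-2),0]}(\XX)[\,?\,]$ or $\ZZ_m$ a stalk complex still inside $\D^{[-(m-1),0]}$; I would carefully set $\ZZ_i = (\sigma^{\leq -(m-i)}\XX)[-1]$ for $1 \le i \le m-1$ together with a final trivial triangle, verify each $\ZZ_i \in \D^{[-(m-1),0]}$ using only that $\XX \in \D^{[-(m-1),0]}$, and check that the middle terms are $\XX$ (for $i=1$) and $0$ (for $i \ge 2$), which lie in $\X$ since $\X$ is additive hence contains $0$. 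I would then conclude $H^{[-(m-2),0]}(\XX) = \ZZ_0 \in \Fac_m(\X)$.
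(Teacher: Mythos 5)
Your overall strategy --- one truncation triangle with middle term $\XX$ followed by degree-stripping triangles with zero middle term --- is exactly the paper's, but the write-up never arrives at a proof, and the ``bookkeeping'' impasse you describe is caused by two concrete errors rather than by a genuine difficulty. First, the truncation triangle of the $t$-structure has the $\sigma^{\leq}$-part as its \emph{first} term:
$$\sigma^{\leq -(m-1)}\XX \to \XX \to \sigma^{\geq -(m-2)}\XX \to \bigl(\sigma^{\leq -(m-1)}\XX\bigr)[1].$$
This is already of the shape \eqref{eq:fac0} with $\XX_1=\XX$, $\ZZ_0=H^{[-(m-2),0]}(\XX)$ and $\ZZ_1=\sigma^{\leq -(m-1)}\XX=H^{-(m-1)}(\XX)[m-1]\in\H[m-1]$; no rotation is needed, and the triangle you start from (with $\sigma^{\geq}$ mapping into $\XX$) is not the truncation triangle. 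Your rotation then introduces a spurious shift: you take $\ZZ_1=(\sigma^{\leq -(m-1)}\XX)[-1]\in\H[m-2]$, whereas the cone of $\XX\to\sigma^{\geq -(m-2)}\XX$ is $(\sigma^{\leq -(m-1)}\XX)[1]=\ZZ_1[1]$, forcing $\ZZ_1=\sigma^{\leq -(m-1)}\XX$ without the extra $[-1]$. This off-by-one is precisely why your count keeps coming out to $m-1$. Second, \Cref{def:sfac} does not require $\ZZ_m=0$, only $\ZZ_m\in\D^{[-(m-1),0]}$; you notice this at one point and then abandon it, and all the attempts to ``close off'' the filtration are chasing a condition that is not there.

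With these two corrections the argument closes immediately and is the paper's proof: $\ZZ_1=H^{-(m-1)}(\XX)[m-1]\in\Fac_{m-1}(\X)$ by \Cref{exm:fac}~(1) applied to $\YY=H^{-(m-1)}(\XX)\in\H$ and $n=m-1$ (the $m-1$ triangles $\YY[i-1]\to 0\to\YY[i]\to\YY[i]$, terminating at $\ZZ_m=\YY\in\H\subseteq\D^{[-(m-1),0]}$, which is allowed to be nonzero), and then the truncation triangle together with \eqref{eq:rec} gives $H^{[-(m-2),0]}(\XX)\in\Fac_{m}(\X)$ --- exactly $1+(m-1)=m$ triangles.
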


\begin{proof}
    By \Cref{exm:fac}~(1), we have $H^{-(m-1)}(\XX)[m-1]\in \Fac_{m-1}(\X)$. Then by the following triangle given by truncation of $\XX$
    $$H^{-(m-1)}(\XX)[m-1]\to \XX\to H^{[-(m-2),0]}(\XX)\to H^{-(m-1)}(\XX)[m]$$
    and the equality~\eqref{eq:rec}, we have $H^{[-(m-2),0]}(\XX)\in \Fac_{m}(\X)$.
\end{proof}

\begin{definition}\label{closed}
    A subcategory $\X$ of $\D^{[-(m-1),0]}$ is called closed under $n$-factors (resp. $n$-subobjects) if $\Fac_{n}(\X)\subseteq\X$ (resp. $\Sub_{n}(\X)\subseteq\X$).
\end{definition}

We give an alternative description of $s$-torsion pairs.

\begin{proposition}\label{prop:fac}
    Let $(\T,\F)$ be a torsion pair in $\D^{[-(m-1),0]}$. The following are equivalent.
    \begin{enumerate}
        \item $(\T,\F)$ is an $s$-torsion pair.
        \item $\T$ is closed under $m$-factors.
        \item $\F$ is closed under $m$-subobjects.
    \end{enumerate}
\end{proposition}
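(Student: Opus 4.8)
The plan is to prove the equivalences in a cycle $(1)\Rightarrow(2)\Rightarrow(1)$ together with $(1)\Leftrightarrow(3)$, exploiting the symmetry between $m$-factors and $m$-subobjects. For $(1)\Rightarrow(2)$: assume $(\T,\F)$ is an $s$-torsion pair, and take $\ZZ\in\Fac_m(\T)$ with defining triangles $\ZZ_i\to\XX_i\to\ZZ_{i-1}\to\ZZ_i[1]$, $1\le i\le m$, where $\XX_i\in\T$. Since $(\T,\F)$ is a torsion pair, we must show $\Hom(\ZZ,\F')=0$ for all $\F'\in\F$, because then $\ZZ\in{}^\bot\F=\T$ by \Cref{lem:summand}. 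Applying $\Hom(-,\F')$ to the $i$-th triangle gives an exact sequence relating $\Hom(\ZZ_{i-1},\F')$, $\Hom(\XX_i,\F')$, and $\Hom(\ZZ_i,\F')$; more usefully, I would also bring in the shifted triangle to get the piece $\Hom(\XX_i,\F')\to\Hom(\ZZ_i,\F')\to\Hom(\ZZ_{i-1}[-1],\F')=\Hom(\ZZ_{i-1},\F'[1])$. The point is that $\Hom(\XX_i,\F')=0$ (torsion pair) and, via \Cref{cor:positive}, $\Hom(\XX_j,\F'[k])=0$ for all $k<0$; iterating the triangles one sees $\ZZ$ is built from $\XX_1,\dots,\XX_m\in\T$ by extensions and shifts $[1],\dots,[m-1]$, so that $\Hom(\ZZ,\F')$ is controlled by $\Hom(\XX_i,\F'[l])$ for $0\le l\le m-1$. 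Since $\F'[l]$ need not lie in $\F$ for $l>0$ I cannot use the torsion-pair orthogonality directly for those terms, so instead I would use the bounded $t$-structure $(\C^{\le0},\C^{\ge0})$ from \Cref{prop:bi}: there $\ZZ\in\D^{[-(m-1),0]}$ with all its "layers" in $\T\subseteq\C^{\le0}$ forces $\ZZ\in\C^{\le0}$, while $\F'\in\C^{\ge1}$, hence $\Hom(\ZZ,\F')=0$ since $\Hom(\C^{\le0},\C^{\ge1})=0$. That is the clean argument: $\Fac_m(\T)\subseteq\C^{\le0}\cap\D^{[-(m-1),0]}=\T$.

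For $(2)\Rightarrow(1)$: I need $\Hom(\T,\F[-1])=0$. Take $\T'\in\T$ and $\F'\in\F$; I want to show any $f\colon\T'\to\F'[-1]$ is zero. Complete $f$ to a triangle $\F'[-1]\to\WW\to\T'\to\F'$, equivalently $\WW\to\T'\xrightarrow{g}\F'\to\WW[1]$ where $g$ corresponds to $f[1]$ shifted appropriately; but $\Hom(\T',\F')=0$ forces $g=0$, so $\WW\cong\T'\oplus\F'[-1]$ and nothing is gained directly. Instead, the right move is to use the characterization via $m$-factors applied not to $\T'$ itself but to show $\F'[-1]$, or rather a suitable object, lands where it shouldn't. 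Concretely: suppose $0\ne f\in\Hom(\T',\F'[-1])$. Note $\F'[-1]\notin\D^{[-(m-1),0]}$ in general, so I should instead consider the cone. Let $\WW=\operatorname{cone}(\F'[-1]\to? )$; better, take the triangle on $f$: $\F'[-1]\to\WW\to\T'\xrightarrow{f}\F'$. Then $\WW\in\F'[-1]*\T'$; taking cohomology, $\WW\in\D^{[-m,0]}$, and in fact the truncation $H^{[-(m-1),0]}(\WW)$ is an $m$-factor of $\{\T'\}\subseteq\T$ (via \Cref{lem:fac1}-type reasoning applied to the triangle), hence lies in $\T$ by (2). Meanwhile the triangle also exhibits a nonzero map from (a shift of) $\F'$ into $\WW[1]$, and one extracts from this a nonzero element of $\Hom(\T,\F)$ or $\Hom(\WW,\F)$ with $\WW\in\T$, contradiction. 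The main obstacle is organizing this cone argument so the truncation genuinely produces an $m$-factor: I expect to iterate, peeling off one cohomology degree at a time, producing exactly $m$ triangles of the form \eqref{eq:fac0}, using that $\F'$ has cohomology concentrated in degree $0$ so $\F'[-1]$ contributes cohomology only in degree $1$, which after one extension step sits in the allowed range.

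Finally $(1)\Leftrightarrow(3)$ is dual: either run the mirror-image argument with $\Sub_m$ and the co-$t$-structure side (using $\F=\T^\bot$ and $\F=\C^{\ge1}\cap\D^{[-(m-1),0]}$, so that $\Sub_m(\F)\subseteq\C^{\ge1}\cap\D^{[-(m-1),0]}=\F$ for $(1)\Rightarrow(3)$), or invoke \Cref{prop:hrs}: applying the Happel-Reiten-Smal{\o} theorem to $(\T,\F)$ swaps the roles and identifies the $s$-torsion condition on one side with a factor/subobject closure on the other. I would also remark that $(2)$ and $(3)$ are each self-sufficient characterizations once one knows a torsion pair is determined by either class. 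The key technical input throughout is \Cref{prop:bi} (the $t$-structure correspondence) and \Cref{cor:positive} (vanishing of all negative extensions), which together reduce the combinatorics of the $m$ iterated triangles to a single orthogonality statement $\Hom(\C^{\le0},\C^{\ge1})=0$; the only genuinely delicate point is the converse directions $(2)\Rightarrow(1)$ and $(3)\Rightarrow(1)$, where one must manufacture the defining triangles of an $m$-factor or $m$-subobject from a hypothetical nonzero negative extension, and I expect the cleanest route there is an induction on $m$ using the recursion \eqref{eq:rec} for $\Fac_n$ together with \Cref{lem:fac1}.
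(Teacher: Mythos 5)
Your argument for $(1)\Rightarrow(2)$ is correct and is genuinely different from the paper's: instead of computing $\Hom(\ZZ_i,\FF[j])$ degree by degree using \Cref{cor:positive} (which is what the paper does, and which does work -- the negative shifts $\F[j]$ are handled precisely by that corollary, so your worry there is unfounded), you pass to the $t$-structure $(\C^{\leq0},\C^{\geq0})$ of \Cref{prop:bi} and observe that the $m$ defining triangles exhibit $\ZZ$ as an iterated extension of $\XX_1,\XX_2[1],\dots,\XX_m[m-1]$ and $\ZZ_m[m]\in\D^{\leq -m}\subseteq\C^{\leq0}$, so $\ZZ\in\C^{\leq0}\cap\D^{[-(m-1),0]}=\T$. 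That is clean and, if anything, shorter than the paper's computation; the dual argument gives $(1)\Rightarrow(3)$.

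The converse direction is where your proposal has a genuine gap. Your plan for $(2)\Rightarrow(1)$ -- take the cone of a hypothetical nonzero $f\in\Hom(\TT,\FF[-1])$ and derive a contradiction -- is never carried through: the triangle you write, $\F'[-1]\to\WW\to\T'\xrightarrow{f}\F'$, is the triangle attached to a map $\T'\to\F'$ rather than to $f\colon\T'\to\F'[-1]$ (the correct cone lies in $\F'[-1]*\T'[1]\subseteq\D^{[-m,1]}$, not $\D^{[-m,0]}$ as you claim), and the decisive step ``one extracts from this a nonzero element of $\Hom(\T,\F)$\dots contradiction'' is asserted without any mechanism; you yourself flag this as ``the main obstacle.'' The paper's proof avoids the cone of $f$ entirely: it truncates $\TT[1]$ itself, producing a triangle $\YY[m]\to\TT[1]\to\ZZ\to\YY[m+1]$ with $\YY=H^{-m}(\TT[1])$ and $\ZZ=H^{[-(m-1),-1]}(\TT[1])$, checks via \Cref{exm:fac} and the recursion \eqref{eq:rec} that $\ZZ\in\Fac_{m}(\T)$ (so $\ZZ\in\T$ by (2)), and notes that $\Hom(\ZZ,\FF)\cong\Hom(\TT[1],\FF)$ because $\FF\in\D^{\geq-(m-1)}$ kills the two outer terms; hence $\Hom(\TT,\FF[-1])=0$. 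Your proposed fallback for $(3)\Rightarrow(1)$ via \Cref{prop:hrs} is also circular, since that theorem takes an $s$-torsion pair as hypothesis. So the forward implications are fine (and nicely done), but $(2)\Rightarrow(1)$ and $(3)\Rightarrow(1)$ are not proved as written.
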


\begin{proof}
    We only show the equivalence (1) $\Leftrightarrow$ (2) since the equivalence (1) $\Leftrightarrow$ (3) can be shown similarly.
    
    (1) $\Rightarrow$ (2): Let $\ZZ$ be an object in $\Fac_{m}(\T)$. Then, by definition, there are triangles
    \[\ZZ_{i}\to\TT_{i}\to\ZZ_{i-1}\to\ZZ_{i}[1],\ 1\leq i\leq m,\]
    with $\ZZ_0=\ZZ,\ZZ_1,\cdots,\ZZ_m\in\D^{[-(m-1),0]}$ and $\TT_1,\cdots,\TT_m\in\T$. For any $\FF\in\F$, applying $\Hom(-,\FF)$ to these triangles, we get exact sequences for all integers $j$:
    $$\Hom(\TT_i,\FF[j])\to\Hom(\ZZ_{i},\FF[j])\to\Hom(\ZZ_{i-1},\FF[j+1])\to \Hom(\TT_i,\FF[j+1]).$$
    When $j\leq -1$, since $\TT_i\in\T$ and $\FF\in\F$,  by \Cref{cor:positive}, we have $$\Hom(\TT_i,\FF[j])=0=\Hom(\TT_i,\FF[j+1]).$$
    Hence, there are isomorphisms
    \[\Hom(\ZZ_{i},\FF[j])\cong\Hom(\ZZ_{i-1},\FF[j+1]),\ j\leq -1.\]
    Due to $\ZZ_m\in\D^{[-(m-1),0]}$ and $\FF[-m]\in\F[-m]\subseteq\D^{\geq 1}$, we have $\Hom(\ZZ_m,\FF[-m])=0$. Hence, by recursion, one has $\Hom(\ZZ_0,\FF)=0$. This implies $\ZZ=\ZZ_0\in\T$ by \Cref{lem:summand}. Therefore, $\T$ is closed under $m$-factors.

    (2) $\Rightarrow$ (1): Let $\TT\in\T$ and $\FF\in\F$. Since $\TT[1]\in\T[1]\subseteq\D^{[-m,-1]}$, by truncation of $\TT[1]$, there is a triangle
    \begin{equation}\label{eq:linshi}
        \YY[m]\to \TT[1]\to \ZZ\to \YY[m+1],
    \end{equation}
    with $\YY=H^{-m}(\TT[1])$ and $\ZZ=H^{[-(m-1),-1]}(\TT[1])\in \D^{[-(m-1),-1]}\subseteq \D^{[-(m-1),0]}$.
    Applying $\Hom(-,\FF)$ to this triangle, we get an exact sequence
    $$\Hom(\YY[m+1],\FF)\to\Hom(\ZZ,\FF)\to\Hom(\TT[1],\FF)
        \to \Hom(\YY[m],\FF).$$
    Since $\FF\in\D^{\geq -(m-1)}$, the first item and the last item in the above sequence are zero. Hence, there is an isomorphism
    $$\Hom(\ZZ,\FF)\cong\Hom(\TT[1],\FF).$$
    By \Cref{exm:fac}~(1), $\YY[m-1]\in \Fac_{m-1}(\T)$. Shifting the triangle~\eqref{eq:linshi} by $[-1]$, we get a triangle
    $$\YY[m-1]\to \TT\to \ZZ[-1]\to \YY[m].$$
    Since $\ZZ[-1]\in \D^{[-(m-1),-1]}[-1]=\D^{[-(m-2),0]}\subseteq\D^{[-(m-1),0]}$, by the equality~\eqref{eq:rec}, we have $\ZZ[-1]\in \Fac_{m}(\T)$. So, by \Cref{exm:fac}~(2), $\ZZ\in \Fac_{m+1}(\T)$. Hence, by the inclusion~\eqref{eq:s subset s-1}, $\ZZ\in \Fac_{m}(\T)$. Then $\ZZ\in\T$, since $\T$ is closed under $m$-factors. Thus, we have $\Hom(\TT[1],\FF)\cong\Hom(\ZZ,\FF)=0$. Therefore, $(\T,\F)$ is an $s$-torsion pair.
\end{proof}

The following result about $m$-factors will be used later.

\begin{lemma}\label{lem:asfac}
    Let $\X$ be a subcategory of $\D^{[-(m-1),0]}$ and $\YY\in\D^{[-(m-1),0]}$. If $\Hom(\X,\YY[j])=0$ for any $j\leq 0$, then $\Hom(\Fac_{m}(\X),\YY[j])=0$ for any $j\leq 0$.
\end{lemma}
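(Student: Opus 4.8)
The plan is to prove the statement by induction on $m$ using the recursive structure \eqref{eq:rec} of $\Fac_n(\X)$, reducing at each step via the long exact sequence obtained from applying $\Hom(-,\YY[j])$ to the defining triangles. Concretely, suppose $\ZZ\in\Fac_m(\X)$, so there are triangles $\ZZ_i\to\XX_i\to\ZZ_{i-1}\to\ZZ_i[1]$ for $1\leq i\leq m$ with $\ZZ_0=\ZZ$, all $\ZZ_i\in\D^{[-(m-1),0]}$, and $\XX_i\in\X$. The idea is the same recursion used in the proof of \Cref{prop:fac}: applying $\Hom(-,\YY[j])$ to the $i$-th triangle gives an exact sequence
\[\Hom(\XX_i,\YY[j])\to\Hom(\ZZ_i,\YY[j])\to\Hom(\ZZ_{i-1},\YY[j+1])\to\Hom(\XX_i,\YY[j+1]).\]
By hypothesis $\Hom(\XX_i,\YY[j])=0$ for all $j\leq 0$, so for $j\leq -1$ the two outer terms vanish and we get $\Hom(\ZZ_i,\YY[j])\cong\Hom(\ZZ_{i-1},\YY[j+1])$.

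Next I would use the grading constraint. Since $\ZZ_m\in\D^{[-(m-1),0]}$ and $\YY\in\D^{[-(m-1),0]}\subseteq\D^{\geq-(m-1)}$, we have $\YY[j]\in\D^{\geq-(m-1)-j}$, which for $j\leq -m$ lies in $\D^{\geq 1}$; hence $\Hom(\ZZ_m,\YY[j])=0$ for all $j\leq -m$. Now fix any $j\leq 0$ for which we want $\Hom(\ZZ_0,\YY[j])=0$. Chasing the isomorphisms $\Hom(\ZZ_0,\YY[j])\cong\Hom(\ZZ_1,\YY[j-1])\cong\cdots\cong\Hom(\ZZ_m,\YY[j-m])$ — valid since at each step the shift index $j, j-1,\dots,j-m+1$ is $\leq -1$ except possibly the very first application; here I need to be slightly careful — and using $j-m\leq -m$, the last group vanishes. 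So $\Hom(\ZZ_0,\YY[j])=0$.

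The one subtlety to handle carefully is the case $j=0$: the isomorphism $\Hom(\ZZ_i,\YY[j])\cong\Hom(\ZZ_{i-1},\YY[j+1])$ was only derived for $j\leq -1$, so I cannot directly use it to peel off the first triangle when $j=0$. Instead, for $j=0$ I apply $\Hom(-,\YY)$ to the first triangle $\ZZ_1\to\XX_1\to\ZZ_0\to\ZZ_1[1]$ and use the exact sequence
\[\Hom(\XX_1,\YY)\to\Hom(\ZZ_0,\YY)\to\Hom(\ZZ_1,\YY[1])\to\Hom(\XX_1,\YY[1]);\]
the first term is zero by hypothesis ($j=0\leq 0$), but the third term $\Hom(\ZZ_1,\YY[1])$ need not vanish a priori, so I must also check $\Hom(\XX_1,\YY[1])$ — which is \emph{not} covered by the hypothesis ($j=1>0$). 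This means the naive argument stalls, and the real content is that one should instead bound $\Hom(\ZZ_0,\YY[j])$ only for $j\leq 0$ by routing through $\ZZ_m$ with a total shift of $m$. Reindexing: $\Hom(\ZZ_0,\YY[j])$ surjects onto / injects appropriately so that it is controlled by $\Hom(\ZZ_m,\YY[j-m])$ together with intermediate terms $\Hom(\XX_i,\YY[j-i+1])$; all these $\XX_i$-terms have shift $j-i+1\leq j\leq 0$ (since $i\geq 1$), hence vanish by hypothesis, and the $\ZZ_m$-term has shift $j-m\leq -m$, hence vanishes by the $t$-structure bound. So the clean way to organize the proof is: show by downward induction on $i$ (from $i=m$ to $i=0$) that $\Hom(\ZZ_i,\YY[j'])=0$ for all $j'\leq i-m$, which for $i=0$ gives exactly $\Hom(\ZZ_0,\YY[j'])=0$ for $j'\leq -m$ — wait, that is too weak.

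Let me restate the cleanest route, which is what I would actually write: prove by downward induction on $i\in\{m,m-1,\dots,0\}$ the statement $P(i)$: ``$\Hom(\ZZ_i,\YY[j])=0$ for every integer $j\leq -(m-i)$''. The base case $P(m)$ is $\Hom(\ZZ_m,\YY[j])=0$ for $j\leq 0$, which need not hold — so this indexing is also off. The honest fix is that the statement as phrased ($\Hom(\Fac_m(\X),\YY[j])=0$ for $j\leq 0$) genuinely requires $m$ triangles to absorb shifts down to $-m$, and the inductive claim must be $P(i)$: ``$\Hom(\ZZ_i,\YY[j])=0$ for all $j\leq 0$'', proved by downward induction. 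Base case $i=m$: $\ZZ_m\in\D^{[-(m-1),0]}$ and for $j\leq 0$, hmm, $\YY[j]\in\D^{\geq -(m-1)-j}\subseteq\D^{\geq-(m-1)}$, so $\Hom$ need not vanish. Thus $P(m)$ fails and the main obstacle is precisely that the endpoint $\ZZ_m$ carries no vanishing for free.

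Therefore the correct and final plan: do \textbf{not} induct downward from $\ZZ_m$; instead, for a \emph{fixed} $j\leq 0$, use the $m$ isomorphisms/exact sequences to shift the question from $\ZZ_0$ at degree $j$ down to $\ZZ_m$ at degree $j-m$. For the first step ($i=1$, degree $j\leq 0$), the exact sequence $\Hom(\XX_1,\YY[j])\to\Hom(\ZZ_0,\YY[j])\to\Hom(\ZZ_1,\YY[j+1-1])$ — i.e. the connecting map lands in $\Hom(\ZZ_1,\YY[j])$ shifted — actually the triangle $\ZZ_1\to\XX_1\to\ZZ_0\to\ZZ_1[1]$ gives $\Hom(\ZZ_0,\YY[j])$ sitting between $\Hom(\XX_1,\YY[j])$ (zero, as $j\leq 0$) and $\Hom(\ZZ_1[1],\YY[j])=\Hom(\ZZ_1,\YY[j-1])$, with the next term $\Hom(\XX_1[1],\YY[j])=\Hom(\XX_1,\YY[j-1])$, zero as $j-1\leq 0$. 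Hence $\Hom(\ZZ_0,\YY[j])\cong\Hom(\ZZ_1,\YY[j-1])$. Iterating, $\Hom(\ZZ_0,\YY[j])\cong\Hom(\ZZ_i,\YY[j-i])$ as long as the $\XX$-shifts $j, j-1,\dots,j-i$ are all $\leq 0$, which holds since $j\leq 0$. Taking $i=m$: $\Hom(\ZZ_0,\YY[j])\cong\Hom(\ZZ_m,\YY[j-m])$, and since $\ZZ_m\in\D^{[-(m-1),0]}$ while $\YY[j-m]\in\D^{\geq-(m-1)-(j-m)}=\D^{\geq 1-j}\subseteq\D^{\geq1}$ (as $j\leq 0$), the right-hand side vanishes by the $t$-structure axiom. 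This completes the proof. The only step needing care is verifying the index bookkeeping so that every $\XX_i$-contribution really does occur at a shift $\leq 0$ — which it does because $i\geq 1$ and $j\leq 0$ force $j-i+1\leq 0$ — and this bookkeeping is the entirety of the argument; there is no serious obstacle, just the need to be careful that the isomorphism at each stage uses both neighbouring $\XX$-terms at shifts $\leq 0$.
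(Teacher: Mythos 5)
Your final paragraph is correct and is essentially the paper's own proof: applying $\Hom(-,\YY)$ to the defining triangles gives isomorphisms $\Hom(\ZZ_{i-1},\YY[l+1])\cong\Hom(\ZZ_{i},\YY[l])$ for $l\leq -1$ (both flanking $\XX_i$-terms vanish by hypothesis), and chaining these for a fixed $j\leq 0$ yields $\Hom(\ZZ,\YY[j])\cong\Hom(\ZZ_m,\YY[j-m])=0$ since $\YY[j-m]\in\D^{\geq 1}$ while $\ZZ_m\in\D^{\leq 0}$. The false starts in between (the long exact sequence written with $\ZZ_0$ and $\ZZ_1$ transposed, and the downward inductions with the wrong degree bounds) are correctly recognized as dead ends and discarded, so only the last paragraph needs to be written up.
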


\begin{proof}
    For any $\ZZ\in\Fac_{m}(\X)$, by definition, there are triangles
    \[\ZZ_{i}\to\XX_{i}\to\ZZ_{i-1}\to\ZZ_{i}[1],\ 1\leq i\leq m,\]
    with $\ZZ_0=\ZZ,\ZZ_1,\cdots,\ZZ_m\in\D^{[-(m-1),0]}$ and $\XX_1,\cdots,\XX_m\in\X$. Applying the functor $\Hom(-,\YY)$ to these triangles, we get exact sequences for all integers $l$ and $1\leq i\leq m$
    \[\Hom(\XX_i,\YY[l])\to\Hom(\ZZ_{i},\YY[l])\to\Hom(\ZZ_{i-1},\YY[l+1])\to\Hom(\XX_i,\YY[l+1]).\]
    When $l\leq -1$, by the assumption, the first item and the last item in the above sequence are zero. Hence, there are isomorphisms
    $$\Hom(\ZZ_{i},\YY[l])\cong\Hom(\ZZ_{i-1},\YY[l+1]),\ 1\leq i\leq m,\ l\leq -1.$$
    Due to $\YY,\ZZ_m\in\D^{[-(m-1),0]}$, we have $\Hom(\ZZ_{m},\YY[-m][j])=0$ for any $j\leq 0$. Hence, by recursion, $\Hom(\ZZ_{i},\YY[j-i])=0$ for any $0\leq i\leq m$ and $j\leq 0$. In particular, for $i=0$, we have $\Hom(\ZZ,\YY[j])=0$ for any $j\leq 0$.
\end{proof}

\section{Torsion pairs induced by silting complexes}\label{sec:tp}

Throughout the rest of the paper, let $A$ be a finite-dimensional algebra over $\k$. We denote by 
\begin{itemize}
    \item $\mod A$ the category of finitely generated right $A$-modules,
    \item $\proj A$ (resp. $\inj A$) the subcategory of $\mod A$ consisting of projective (resp. injective) modules,
    \item $D^b(\mod A)$ the bounded derived category of $\mod A$,
    \item $K^b(\proj A)$ (resp. $K^b(\inj A)$) the bounded homotopy category of $\proj A$ (resp. $\inj A$).
\end{itemize}
We regard $\mod A$, $K^b(\proj A)$ and $K^b(\inj A)$ as subcategories of $D^b(\mod A)$ in a natural way. For any two objects $\XX$ and $\YY$ in $D^b(\mod A)$, we simply denote by $$\Hom(\XX,\YY)=\Hom_{D^b(\mod A)}(\XX,\YY)$$ the set of morphisms from $\XX$ to $\YY$ in $D^b(\mod A)$. 

Let $D=\Hom_\k(-,\k)$ be the standard $\k$-linear duality. Denote by
$$\nu=D\Hom_A(-,A):\proj A\to \inj A$$
the Nakayama functor, with quasi-inverse $\nu^{-}=\Hom_A(DA,-)$. They induce an equivalence
\begin{equation}\label{Naks}
    \nu:K^b(\proj A)\to K^b(\inj A)
\end{equation}
with quasi-inverse 
\begin{equation}\label{inNaks}
    \nu^{-}:K^b(\inj A)\to K^b(\proj A)
\end{equation}
There is a well-known duality, given as a functorial isomorphism
\begin{equation}\label{eq:Nakayama}
    \Hom(\XX,\YY)\cong D\Hom(\YY,\nu\XX).
\end{equation}
for any $\XX\in K^b(\proj A)$ and any $\YY\in D^b(\mod A)$. See e.g. \cite[Chapter~1, Section~4.6]{H}.

We denote by $(\D^{\leq 0},\D^{\geq 0})$ the canonical $t$-structure on $D^b(\mod A)$. That is,
$$\D^{\leq 0}:=\{\XX\in D^b(\mod A)\mid H^i(\XX)=0,\ \forall i>0\},$$
and
$$\D^{\geq 0}:=\{\XX\in D^b(\mod A)\mid H^i(\XX)=0,\ \forall i<0\}.$$
The heart $\D^{\leq 0}\cap\D^{\geq 0}$ can be identified with $\mod A$.

\begin{notation}
    For the canonical $t$-structure $(\D^{\leq 0},\D^{\geq 0})$, we continue using the concepts and notations introduced in \Cref{trun1} and \Cref{trun2}, where the truncation functors $\sigma^{\leq p}$ and $\sigma^{\geq p}$ are given by canonical truncation, cf. \Cref{sec: trun}.
\end{notation}

\begin{definition}
    The $m$-extended module category $\emod{m}{A}$ of $A$ is defined to be the subcategory of $D^b(\mod A)$ given as
    $$\begin{array}{rcl}
    \emod{m}{A} & := & \D^{\leq0}\cap \D^{\geq -(m-1)} \\
     & = & \{\XX\in D^b(\mod A)\mid H^i(\XX)=0,\ \forall i\notin [-(m-1),0]\}.
    \end{array}$$
\end{definition}

By definition, $\mod A=1\text{-}\mod A$. In general, we have
$$\emod{m}{A}=(\mod A)[m-1]\ast\cdots\ast(\mod A)[1]\ast\mod A.$$
Recall from \Cref{extri} that the extended module category $\emod{m}{A}$ is an extriangulated category with $\mathbb{E}(\XX,\YY)=\Hom(\XX,\YY[1])$ for any $\XX,\YY\in\emod{m}{A}$.

The $m$-extended module category $\emod{m}{A}$ is
\begin{itemize}
    \item $\k$-linear, i.e., $\Hom(\XX,\YY)$ is a vector space over $\k$ for any $\XX,\YY\in\emod{m}{A}$, and the compositions of morphisms are $\k$-linear,
    \item Hom-finite, i.e., $\dim_\k\Hom(\XX,\YY)<\infty$ for any $\XX,\YY\in\emod{m}{A}$,
    \item $\mathbb{E}$-finite, i.e., $\dim_\k\mathbb{E}(\XX,\YY)<\infty$ for any $\XX,\YY\in\emod{m}{A}$,
    \item Krull-Schmidt, i.e., any object $\XX$ in $\emod{m}{A}$ is isomorphic to a direct sum of objects whose local rings are local.
\end{itemize}
This is because the bounded derived category $D^b(\mod A)$ is $\k$-linear, Hom-finite and Krull-Schmidt.

Let $\PP$ be an $(m+1)$-term complex in $K^b(\proj A)$. That is, $\PP=(P^i,d^i:P^i\to P^{i+1})\in K^b(\proj A)$ with $P^i=0$ for any $i\notin [-m,0]$. By canonical truncation of $\PP$, there is a triangle
\begin{equation}\label{eq:tripp}
    H^{-m}(\PP)[m]\to \PP\to H^{[-(m-1),0]}(\PP)\to H^{-m}(\PP)[m+1],
\end{equation}
with $H^{[-(m-1),0]}(\PP)\in \emod{m}{A}$. 

The following easy observation is a generalization of \cite[Lemma~2.7]{BZ1}.

\begin{lemma}\label{lem:bz1-2.7}
    For any $\XX\in \emod{m}{A}$, there are functorial isomorphisms
    \begin{equation}\label{eq:iso}
    \Hom(\PP,\XX[j])\cong \Hom(H^{[-(m-1),0]}(\PP),\XX[j]),\ j\leq 0,
    \end{equation}
    and a monomorphism
    \begin{equation}\label{eq:mono}
        \Hom(H^{[-(m-1),0]}(\PP),\XX[1])\hookrightarrow \Hom(\PP,\XX[1]).
    \end{equation}
\end{lemma}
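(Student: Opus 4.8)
The plan is to use the triangle~\eqref{eq:tripp} coming from canonical truncation of $\PP$, namely
\[
    H^{-m}(\PP)[m]\to \PP\to H^{[-(m-1),0]}(\PP)\to H^{-m}(\PP)[m+1],
\]
and to apply the cohomological functor $\Hom(-,\XX[j])$ to it. This gives, for each integer $j$, a long exact sequence
\[
    \Hom(H^{-m}(\PP)[m+1],\XX[j])\to\Hom(H^{[-(m-1),0]}(\PP),\XX[j])\to\Hom(\PP,\XX[j])\to\Hom(H^{-m}(\PP)[m],\XX[j]).
\]
So everything reduces to computing the outer two terms $\Hom(H^{-m}(\PP)[m+1],\XX[j])=\Hom(H^{-m}(\PP),\XX[j-m-1])$ and $\Hom(H^{-m}(\PP)[m],\XX[j])=\Hom(H^{-m}(\PP),\XX[j-m])$ and showing they vanish (for~\eqref{eq:iso}) or that only the right-hand one vanishes (for~\eqref{eq:mono}).

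The key computation is this: $H^{-m}(\PP)$ is a module, hence lies in $\D^{[0,0]}$, i.e. in $\mathcal{H}[0]$ under the canonical $t$-structure; equivalently $H^{-m}(\PP)\in\D^{\leq 0}\cap\D^{\geq 0}$. On the other hand $\XX\in\emod{m}{A}=\D^{\leq 0}\cap\D^{\geq-(m-1)}$, so $\XX[k]\in\D^{\geq -(m-1)+k}$ for any $k$. For $j\leq 0$ we have $j-m-1\leq -(m+1)$ and $j-m\leq -m$, so $\XX[j-m-1]\in\D^{\geq -(m-1)+(j-m-1)}=\D^{\geq j-2m}\subseteq\D^{\geq 1}$ (as $j\leq 0$ gives $j-2m\leq -2m\le-2<1$... more carefully: $-(m-1)+(j-m-1)=j-2m\le -2m<0$, but we need $\D^{\ge 1}$; in fact $\XX[j-m]\in\D^{\ge j-2m+1}$, and for $j\le 0$, $j-2m+1\le 1$, so one uses the sharper fact that $H^i(\XX)=0$ for $i>0$ together with the concentration of $H^{-m}(\PP)$ in a single degree). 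The cleanest way is to observe directly that $\Hom(H^{-m}(\PP)[m],\XX[j])=\Hom(H^{-m}(\PP),\XX[j-m])$ and $\XX[j-m]$ has cohomology concentrated in degrees $\ge -(m-1)-(j-m)=m-1-j+m$... I will instead simply record that $\XX[j-m]\in\D^{\geq 1}$ for $j\le 0$ because $\XX\in\D^{\geq-(m-1)}$ forces $\XX[j-m]\in\D^{\geq -(m-1)+(m-j)}=\D^{\geq 1-j}\subseteq\D^{\geq 1}$, while $H^{-m}(\PP)\in\D^{\leq 0}$, and $\Hom(\D^{\leq 0},\D^{\geq 1})=0$ by the $t$-structure axiom~(1). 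This kills both outer terms when $j\le 0$, yielding the isomorphism~\eqref{eq:iso}. For~\eqref{eq:mono}, take $j=1$: then $\XX[1-m]\in\D^{\geq -(m-1)+(m-1)}=\D^{\geq 0}$ and $\XX[1-m-1]=\XX[-m]\in\D^{\geq 1}$, so the right-hand term $\Hom(H^{-m}(\PP),\XX[1-m])$ need not vanish but the left-hand term $\Hom(H^{-m}(\PP),\XX[-m])=\Hom(\D^{\leq 0},\D^{\geq 1})=0$ does, and exactness of
\[
    0=\Hom(H^{-m}(\PP)[m+1],\XX[1])\to\Hom(H^{[-(m-1),0]}(\PP),\XX[1])\to\Hom(\PP,\XX[1])
\]
gives the desired monomorphism. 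Functoriality of all these maps is automatic since they are induced by the natural transformation $\PP\to H^{[-(m-1),0]}(\PP)$ of~\eqref{eq:tripp}.

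The only mild subtlety — the ``main obstacle'', such as it is — is bookkeeping the degree shifts correctly to see that $\D^{\leq 0}$ meets the shifted copies of $\emod{m}{A}$ in the Hom-orthogonality zone of the $t$-structure; there is no deeper difficulty. One should double-check that for $j\le 0$ both $\XX[j-m]$ and $\XX[j-m-1]$ land in $\D^{\ge 1}$: indeed $\XX\in\D^{\ge-(m-1)}$ gives $\XX[j-m]\in\D^{\ge-(m-1)-(j-m)}=\D^{\ge 1-j}$ and $\XX[j-m-1]\in\D^{\ge 2-j}$, and $1-j\ge 1$ exactly when $j\le 0$. Combined with $H^{-m}(\PP)\in\D^{\le 0}$ and axiom~(1) of a $t$-structure (recalling $\D^{\ge 1}=\D^{\ge 0}[-1]$), both Hom-groups vanish, completing the argument.
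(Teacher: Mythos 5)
Your proposal is correct and follows essentially the same route as the paper: apply $\Hom(-,\XX[j])$ to the canonical truncation triangle~\eqref{eq:tripp} and kill the outer terms using $H^{-m}(\PP)[m]\in\D^{\leq -m}$, $H^{-m}(\PP)[m+1]\in\D^{\leq -(m+1)}$ and $\XX\in\D^{\geq -(m-1)}$, noting that the left-hand term still vanishes at $j=1$ to get the monomorphism. The degree bookkeeping, despite the hesitations in your write-up, comes out right.
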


\begin{proof}
Applying $\Hom(-,\XX[j])$ to the triangle~\eqref{eq:tripp}, we obtain an exact sequence
\[
\begin{array}{rl}
     & \Hom(H^{m}(\PP)[m+1],\XX[j])\to \Hom(H^{[-(m-1),0]}(\PP),\XX[j])\to \Hom(\PP,\XX[j]) \\
    \to & \Hom(H^{-m}(\PP)[m],\XX[j]).
\end{array}
\]
Since $H^{-m}(\PP)[m+1]\in \D^{\leq -(m+1)}$, $H^{-m}(\PP)[m]\in \D^{\leq -m}$ and $\XX\in \D^{\geq-(m-1)}$, in the above sequence, the first item is zero for $j\leq 1$ and the last item is zero for $j\leq 0$. Thus, we get the required isomorphisms and monomorphism.
\end{proof}

Consider the following two subcategories of $\emod{m}{A}$:
\begin{equation}\label{eq:tpp}
    \T(\PP)=\{\XX\in\emod{m}{A}\mid\Hom(\PP,\XX[j])=0,\ 1\leq j\leq m\},
\end{equation}
and
\begin{equation}\label{eq:fpp}
    \F(\PP)=\{\XX\in\emod{m}{A}\mid\Hom(\PP,\XX[j])=0,\ -(m-1)\leq j\leq 0\}.
\end{equation}
Since $\PP$ is an $(m+1)$-term complex of projective modules, we have
\begin{equation}\label{eq:tpp1}
\T(\PP)=\{\XX\in\emod{m}{A}\mid\Hom(\PP,\XX[j])=0,\ j\geq 1\},
\end{equation}
and
\begin{equation}\label{eq:fpp1}
\F(\PP)=\{\XX\in\emod{m}{A}\mid\Hom(\PP,\XX[j])=0,\ j\leq 0\}.
\end{equation}

\begin{remark}
    Both $\T(\PP)$ and $\F(\PP)$ are closed under extensions. Therefore, they are extriangulated categories, with $\mathbb{E}(\XX,\YY)=\Hom(\XX,\YY[1])$.
\end{remark}

Recall from \Cref{def:sfac} that for any subcategory $\X$ of $\emod{m}{A}$, an $m$-factor of $\X$ is an object $\ZZ$ satisfying that there are triangles
\begin{equation}\label{eq:fac}
\ZZ_{i}\to\XX_i\to\ZZ_{i-1}\to\ZZ_{i}[1],\ 1\leq i\leq m,
\end{equation}
with $\ZZ_0=\ZZ,\ZZ_1,\cdots,\ZZ_m\in\emod{m}{A}$ and $\XX_1,\cdots,\XX_m\in\X$. The subcategory of $\emod{m}{A}$ consisting of all $m$-factors of $\X$ is denoted by $\Fac_m(\X)$. The subcategory $\X$ is called closed under $m$-factors if $\Fac_m(\X)\subseteq\X$. There are dual notions: $m$-subobjects, $\Sub_{m}(\XX)$ and closed under $m$-subobjects. 

When $\X=\add\XX$ for some object $\XX\in\emod{m}{A}$, we simply denote $\Fac_{m}(\XX)=\Fac_{m}(\add\XX)$ and $\Sub_{m}(\XX)=\Sub_{m}(\add\XX)$.

\begin{lemma}\label{lem:fac}
    The subcategory $\T(\PP)$ is closed under $m$-factors and the subcategory $\F(\PP)$ is closed under $m$-subobjects.
\end{lemma}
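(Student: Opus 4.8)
The plan is to show that $\Fac_m(\T(\PP))\subseteq\T(\PP)$ and, dually, $\Sub_m(\F(\PP))\subseteq\F(\PP)$; I will spell out the first inclusion since the second follows by the analogous argument. The key point is that by \eqref{eq:tpp1}, membership in $\T(\PP)$ is characterized by the vanishing $\Hom(\PP,\XX[j])=0$ for all $j\geq 1$, so I would take an arbitrary $\ZZ\in\Fac_m(\T(\PP))$ and chase this vanishing through the $m$ defining triangles \eqref{eq:fac}.

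First I would fix $\ZZ\in\Fac_m(\T(\PP))$ together with the triangles $\ZZ_i\to\TT_i\to\ZZ_{i-1}\to\ZZ_i[1]$ for $1\leq i\leq m$, with $\ZZ_0=\ZZ$, all $\ZZ_i\in\emod{m}{A}$ and all $\TT_i\in\T(\PP)$. Applying $\Hom(\PP,-)$ to the $i$-th triangle yields the long exact sequence
\[\Hom(\PP,\TT_i[l])\to\Hom(\PP,\ZZ_{i-1}[l])\to\Hom(\PP,\ZZ_i[l+1])\to\Hom(\PP,\TT_i[l+1]).\]
Since $\TT_i\in\T(\PP)$, the outer terms vanish whenever $l\geq 1$ (using $l+1\geq 2\geq 1$), so I get isomorphisms $\Hom(\PP,\ZZ_{i-1}[l])\cong\Hom(\PP,\ZZ_i[l+1])$ for all $l\geq 1$ and all $1\leq i\leq m$. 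Iterating these, $\Hom(\PP,\ZZ[l])\cong\Hom(\PP,\ZZ_1[l+1])\cong\cdots\cong\Hom(\PP,\ZZ_m[l+m])$ for every $l\geq 1$. Now for $l\geq 1$ we have $l+m\geq m+1$, and $\ZZ_m\in\emod{m}{A}$ means $\ZZ_m[l+m]$ lives in $\D^{\leq -(l+m)}\subseteq\D^{\leq -(m+1)}$ cohomologically; more precisely, since $\PP$ is concentrated in degrees $[-m,0]$ and $\ZZ_m$ has cohomology only in degrees $[-(m-1),0]$, the complex $\ZZ_m[l+m]$ has cohomology concentrated in degrees $\leq -(m+1)$, hence $\Hom(\PP,\ZZ_m[l+m])=0$ by a standard degree argument (or, cleanly, because $\ZZ_m\in\D^{\geq-(m-1)}$ gives $\ZZ_m[l+m]\in\D^{\geq l+1}\subseteq\D^{\geq 2}$ while $\PP\in\D^{\leq 0}=K^b(\proj A)\cap\D^{\leq 0}$, and $\Hom(\D^{\leq 0},\D^{\geq 2})=0$). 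Therefore $\Hom(\PP,\ZZ[l])=0$ for all $l\geq 1$, i.e., $\ZZ\in\T(\PP)$.

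For the dual statement I would run the symmetric argument: for $\ZZ\in\Sub_m(\F(\PP))$ with triangles $\ZZ_{i-1}\to\XX_i\to\ZZ_i\to\ZZ_{i-1}[1]$, apply $\Hom(\PP,-)$ and use \eqref{eq:fpp1} (vanishing for $j\leq 0$) to shift the relevant $\Hom$ groups down until the degree forces vanishing because the top term $\ZZ_m$ shifted far enough sits in $\D^{\geq -(m-1)}$ beyond the range where $\PP$, a complex in degrees $[-m,0]$, can see it. The main obstacle — really the only subtlety — is making the final degree/vanishing argument airtight: I must be careful that the accumulated shift over $m$ triangles lands $\ZZ_m$ (or the dual) strictly outside the homological window of $\PP$, and that the intermediate objects $\ZZ_i$ genuinely remain in $\emod{m}{A}$ so that the long exact sequences and \Cref{def:sfac} apply; both are guaranteed by the definition of $m$-factor, but the bookkeeping on the indices $l+i$ versus the width $m+1$ of $\PP$ is where I would be most careful.
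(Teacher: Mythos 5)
Your proof is correct and follows essentially the same route as the paper's: apply $\Hom(\PP,-)$ to the $m$ defining triangles, use $\TT_i\in\T(\PP)$ to get the isomorphisms $\Hom(\PP,\ZZ_{i-1}[l])\cong\Hom(\PP,\ZZ_i[l+1])$ for $l\geq 1$, and conclude from $\Hom(\PP,\ZZ_m[l+m])=0$ because $\ZZ_m[l+m]\in\D^{\leq-(m+1)}$ while $\PP$ is concentrated in degrees $[-m,0]$. One caveat: your parenthetical ``cleanly'' alternative is wrong — shifting by $[l+m]$ sends $\D^{\geq-(m-1)}$ to $\D^{\geq-(2m-1)-l}$, not to $\D^{\geq l+1}$ (that direction of argument is the one needed for the dual statement about $\F(\PP)$ and $\Sub_m$) — but this is harmless since your primary degree argument, which is the paper's, already closes the proof.
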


\begin{proof}
    We will show the first part of the assertion, as the argument for the second part is analogous. Let $\ZZ\in\Fac_{m}(\T(\PP))$. Then by definition, there are triangles~\eqref{eq:fac}, where $\X=\T(\PP)$. Applying $\Hom(\PP,-)$ to these triangles, we get exact sequences for all integers $j$ and $1\leq i\leq m$
    \[\Hom(\PP,\XX_{i}[j])\to\Hom(\PP,\ZZ_{i-1}[j])\to\Hom(\PP,\ZZ_{i}[j+1])\to\Hom(\PP,\XX_{i}[j+1]).\]
    When $j\geq 1$, by \eqref{eq:tpp1}, the first item and the last term in the above sequence are zero. Hence, there are isomorphisms
    $$\Hom(\PP,\ZZ_{i-1}[j])\cong\Hom(\PP,\ZZ_{i}[j+1]),\ 1\leq i\leq m,\ j\geq 1.$$
    For any $j\geq 1$, due to $\ZZ_m[j+m]\in\D^{\leq -(m+1)}$ and that $\PP$ is an $(m+1)$-term complex of projective modules, we have $\Hom(\PP,\ZZ_m[j+m])=0$. Hence, by recursion, $\Hom(\PP,\ZZ[j])=0$ for any $j\geq 1$. Thus, we have $\ZZ\in\T(\PP)$. So $\T(\PP)$ is closed under $m$-factors.
\end{proof}

A complex $\PP\in K^b(\proj A)$ is called \emph{presilting} if $\Hom(\PP,\PP[j])=0$ for any $j>0$. An object $\XX$ in an extriangulated category $\mathcal{E}$ is called \emph{projective} (resp. \emph{injective}) if $\mathbb{E}(\XX,\mathcal{E})=0$ (resp. $\mathbb{E}(\mathcal{E},\XX)=0$).

\begin{lemma}\label{lem:proj}
    Let $\PP$ be an $(m+1)$-term presilting complex in $K^b(\proj A)$. The following hold.
    \begin{enumerate}
        \item[(a)] $H^{[-(m-1),0]}(\PP)$ belongs to and is projective in $\T(\PP)$.
        \item[(b)] $H^{[-(m-1),0]}(\nu\PP[-1])$ belongs to and is injective in $\F(\PP)$.
    \end{enumerate}
\end{lemma}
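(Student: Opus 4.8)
The plan is to prove parts (a) and (b) in parallel; since $\F(\PP)$ involves $\nu\PP[-1]$ in place of $\PP$, part (b) should follow from part (a) by applying the Nakayama duality \eqref{eq:Nakayama}, so I will concentrate on (a). First I would verify membership: to see $H^{[-(m-1),0]}(\PP)\in\T(\PP)$, apply $\Hom(\PP,-)$ to the defining triangle \eqref{eq:tripp}. For $j\geq 1$, the term $\Hom(\PP, H^{-m}(\PP)[m+1][j])$ vanishes since $H^{-m}(\PP)[m+1][j]\in\D^{\leq -(m+1)-j}\subseteq\D^{\leq -(m+1)}$ and $\PP$ is an $(m+1)$-term complex of projectives (so $\Hom(\PP,-)$ kills $\D^{\leq-(m+1)}$), while $\Hom(\PP,H^{-m}(\PP)[m][j])$ vanishes for the same reason once $j\geq 1$. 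Thus $\Hom(\PP, H^{[-(m-1),0]}(\PP)[j])\cong\Hom(\PP,\PP[j])$ for $j\geq 1$, and the right-hand side is zero because $\PP$ is presilting. Hence $H^{[-(m-1),0]}(\PP)\in\T(\PP)$ by \eqref{eq:tpp1}.

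Next, for projectivity in the extriangulated category $\T(\PP)$, I need $\mathbb{E}(H^{[-(m-1),0]}(\PP),\YY)=\Hom(H^{[-(m-1),0]}(\PP),\YY[1])=0$ for every $\YY\in\T(\PP)$. By the monomorphism \eqref{eq:mono} of \Cref{lem:bz1-2.7}, it suffices to note that $\Hom(H^{[-(m-1),0]}(\PP),\YY[1])$ injects into $\Hom(\PP,\YY[1])$; wait — that is the wrong direction for what I want, since I need the source to vanish, not to be controlled by a possibly-nonzero target. So instead I would argue directly: apply $\Hom(-,\YY[1])$ to the triangle \eqref{eq:tripp} to get the exact sequence
\[
\Hom(H^{-m}(\PP)[m],\YY[1])\to\Hom(H^{[-(m-1),0]}(\PP),\YY[1])\to\Hom(\PP,\YY[1])\to\Hom(H^{-m}(\PP)[m],\YY[2]).
\]
The third term $\Hom(\PP,\YY[1])$ is zero since $\YY\in\T(\PP)$ and $1\in[1,m]$ in \eqref{eq:tpp}. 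The first term $\Hom(H^{-m}(\PP)[m],\YY[1])=\Hom(H^{-m}(\PP),\YY[1-m])$: since $\YY\in\emod{m}{A}\subseteq\D^{\geq-(m-1)}$ we have $\YY[1-m]\in\D^{\geq 0}$, and $H^{-m}(\PP)\in\mod A\subseteq\D^{[0,0]}$, so this Hom need not vanish for free — this is the one place requiring care. The fix is that $\Hom(H^{-m}(\PP),\YY[1-m])$ injects into $\Hom(H^{-m}(\PP)[m],\YY[1])\subseteq\Hom_{D^b(\mod A)}(\mod A, \emod{m}{A}[1-m])$, and for a module $M$ and $\YY\in\D^{\geq-(m-1)}$, $\Hom(M,\YY[1-m])=\Hom_{D^b}(M[m-1],\YY[0])$... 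I would instead handle it by noting $\YY[1-m]\in\D^{\geq0}$ but also the relevant Ext group $\Ext^{1-m}_A(H^{-m}(\PP), H^{\bullet}(\YY))$ is what appears, which vanishes when $1-m>0$, i.e. automatically for $m\geq 2$; for $m=1$ the statement is classical ($\PP$ is a $2$-term silting complex and $H^{[0,0]}(\PP)=\PP$ up to the relevant truncation). The cleanest route is to observe $\Hom(H^{-m}(\PP)[m],\YY[1]) = 0$ because $H^{-m}(\PP)[m]\in\D^{\leq -m}$, hence $H^{-m}(\PP)[m][-1]=H^{-m}(\PP)[m-1]\in\D^{\leq-(m-1)}$, and $\YY\in\D^{\geq-(m-1)}\cap\D^{\leq0}$ gives $\Hom(H^{-m}(\PP)[m-1],\YY)$ nonzero potentially — so in fact one must use the $t$-structure axiom $\Hom(\D^{\leq0},\D^{\geq1})=0$ after observing $H^{-m}(\PP)[m]\in\D^{\leq-m}\subseteq\D^{\leq-1}=\D^{\leq0}[1]$, thus $\Hom(H^{-m}(\PP)[m],\YY[1])$ is a $\Hom$ from $\D^{\leq-1}$ into $\D^{\geq-(m-1)}[1]=\D^{\geq-(m-2)}$; since $-m\leq-(m-2)-1$ is equivalent to $m\geq 2$... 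The upshot: for $m\geq2$ the first term vanishes by the $t$-structure orthogonality and we conclude $\Hom(H^{[-(m-1),0]}(\PP),\YY[1])=0$; the case $m=1$ reduces to $\T(\PP)=\Fac\,\PP$ with $\PP$ a module, which is standard.

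The main obstacle I anticipate is exactly this boundary bookkeeping in the vanishing of $\Hom(H^{-m}(\PP)[m],\YY[1])$, i.e.\ making sure the degree shifts land correctly against $\emod{m}{A}=\D^{[-(m-1),0]}$; everything else is a formal diagram chase through the triangle \eqref{eq:tripp} together with the presilting hypothesis and \Cref{lem:bz1-2.7}. For part (b), I would dualize: by \eqref{eq:Nakayama}, $\Hom(\XX,\YY)\cong D\Hom(\YY,\nu\XX)$ for $\XX\in K^b(\proj A)$, which converts the condition $\Hom(\PP,\YY[j])=0$ defining $\F(\PP)$ into $\Hom(\YY[j],\nu\PP)=0$, i.e.\ $\Hom(\YY,\nu\PP[-j])=0$; one checks this exhibits $\F(\PP)$ as a ``$\Sub$''-type subcategory with respect to $\nu\PP$, and then $H^{[-(m-1),0]}(\nu\PP[-1])$ plays the role for $\F(\PP)$, and injectivity there that $\Hom(\ZZ,H^{[-(m-1),0]}(\nu\PP[-1])[1])=0$ for $\ZZ\in\F(\PP)$ follows from the $\Hom(-,-)$-dual of the part (a) argument applied to the triangle obtained by truncating $\nu\PP[-1]$, using $\nu$-duality \eqref{eq:Nakayama} to transport presilting of $\PP$ to the needed vanishing of $\Hom(\nu\PP[-1],\nu\PP[-1][j])$ for $j>0$.
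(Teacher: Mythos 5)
Your membership argument in (a) is fine and matches the paper. The projectivity argument, however, goes wrong at the moment you dismiss the monomorphism \eqref{eq:mono}: that injection is \emph{exactly} the right tool and points in exactly the right direction. For $\YY\in\T(\PP)$ the target $\Hom(\PP,\YY[1])$ is zero by the very definition \eqref{eq:tpp}, so the injection $\Hom(H^{[-(m-1),0]}(\PP),\YY[1])\hookrightarrow\Hom(\PP,\YY[1])=0$ forces the source to vanish --- there is no ``possibly-nonzero target.'' This one line is the paper's entire projectivity argument, and you had it in hand before talking yourself out of it.

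The replacement argument you then build is broken by an indexing error. Applying $\Hom(-,\YY[1])$ to the triangle \eqref{eq:tripp} gives
$$\Hom(H^{-m}(\PP)[m+1],\YY[1])\to\Hom(H^{[-(m-1),0]}(\PP),\YY[1])\to\Hom(\PP,\YY[1])\to\Hom(H^{-m}(\PP)[m],\YY[1]),$$
so the term whose vanishing you need is the \emph{first} one, $\Hom(H^{-m}(\PP)[m+1],\YY[1])\cong\Hom(H^{-m}(\PP)[m],\YY)$, and this is zero for every $m\geq 1$ because $H^{-m}(\PP)[m]\in\D^{\leq -m}$ while $\YY\in\D^{\geq -(m-1)}$ (this is precisely how the paper proves \eqref{eq:mono}). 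The group $\Hom(H^{-m}(\PP)[m],\YY[1])$ that you agonize over sits \emph{after} $\Hom(\PP,\YY[1])$ and is irrelevant to the conclusion; moreover your claim that it vanishes for $m\geq 2$ is false in general ($\Hom(H^{-m}(\PP)[m],\YY[1])\cong\Hom(H^{-m}(\PP),\YY[1-m])$ with $H^{-m}(\PP)\in\D^{[0,0]}$ and $\YY[1-m]\in\D^{[0,m-1]}$, which is a $\Hom$ from $\D^{\leq 0}$ to $\D^{\geq 0}$ and need not be zero), so the $m\geq 2$ versus $m=1$ case split is both unnecessary and unjustified. Your sketch of (b) has the right flavour (truncate $\nu\PP[-1]$ and use the duality \eqref{eq:Nakayama} to transport presilting of $\PP$), but note it is not a literal Nakayama dual of (a): the complex $\nu\PP[-1]$ lives in degrees $[-(m-1),1]$, so the truncation triangle cuts off $H^1$ rather than $H^{-m}$, and membership requires checking $\Hom(\PP,\nu\PP[-1+i])\cong D\Hom(\PP[-1+i],\PP)=0$ for $i\leq 0$; these details are absent from your outline.
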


\begin{proof}
    (a) Applying $\Hom(\PP,-)$ to the triangle~\eqref{eq:tripp}, we get exact sequences
    \[\Hom(\PP,\PP[i])\to \Hom(\PP,H^{[-(m-1),0]}(\PP)[i])\to\Hom(\PP,H^{-m}(\PP)[m+i+1]),\ i\in\mathbb{Z}.\]
    When $i\geq 1$, the first item is zero thanks to $\PP$ being presilting, and the last item is zero thanks to that $\PP$ is an $(m+1)$-term complex of projective modules. Therefore, $\Hom(\PP,H^{[-(m-1),0]}(\PP)[i])=0$ for any $i\geq 1$. Hence, $H^{[-(m-1),0]}(\PP)\in\T(\PP)$. 
    
    Next, for any $\TT\in\T(\PP)$, by \Cref{lem:bz1-2.7}, there is a monomorphism $$\Hom(H^{[-(m-1),0]}(\PP),\TT[1])\hookrightarrow\Hom(\PP,\TT[1])=0.$$
    Hence $\mathbb{E}(H^{[-(m-1),0]}(\PP),\TT)=\Hom(H^{[-(m-1),0]}(\PP),\TT[1])=0$, which implies that the object $H^{[-(m-1),0]}(\PP)$ is projective in $\T(\PP)$.

    (b) Let $\II=\nu\PP[-1]$. By canonical truncation of $\II$, there is a triangle
    \begin{equation}\label{eq:nuP}
    H^{[-(m-1),0]}(\II)\to \II \to H^1(\II)[-1]\to H^{[-(m-1),0]}(\II)[1].
    \end{equation}
    Applying $\Hom(\PP-)$ to this triangle, we get exact sequences
    $$\Hom(\PP,H^1(\II)[i-2])\to\Hom(\PP,H^{[-(m-1),0]}(\II)[i])\to\Hom(\PP,\II[i]),\ i\in\mathbb{Z}.$$
    When $i\leq 0$, the first item is zero because $\PP\in\D^{\leq 0}$ and $H^1(\II)[i-2]\in\D^{\geq 2}$, and the last item $\Hom(\PP,\II[i])=\Hom(\PP,\nu\PP[i-1])$, by the duality \eqref{eq:Nakayama}, is isomorphic to $D\Hom(\PP[i-1],\PP)=0$, since $\PP$ is presilting. Hence, $\Hom(\PP,H^{[-(m-1),0]}(\II)[i])=0$ for any $i\leq 0$. Thus, $H^{[-(m-1),0]}(\nu\PP[-1])$ belongs to $\F(\PP)$. 
    
    Next, for any $\FF\in\F(\PP)$, applying $\Hom(\FF,-)$ to the triangle~\eqref{eq:nuP}, we get an exact sequence
    \[\Hom(\FF,H^1(\II)[-1])\to\Hom(\FF,H^{[-(m-1),0]}(\II)[1])\to\Hom(\FF,\II[1]),\]
    where the first item is zero because $\FF\in\D^{\leq 0}$ and $H^1(\II)[-1]\in\D^{\geq 1}$, and the last item $\Hom(\FF,\II[1])=\Hom(\FF,\nu\PP)$, which by the duality \eqref{eq:Nakayama}, is isomorphic to $D\Hom(\PP,\FF)=0$. So $\mathbb{E}(\FF,H^{[-(m-1),0]}(\II))=\Hom(\FF,H^{[-(m-1),0]}(\II)[1])=0$, which implies that the object $H^{[-(m-1),0]}(\nu\PP[-1])$ is injective in $\F(\PP)$.
\end{proof}

Let $\PP$ be an $(m+1)$-term presilting complex in $K^b(\proj A)$. Consider the following two subcategories of $D^b(\mod A)$:
$$D^{\leq 0}(\PP)=\{\mathbf{X}\in D^b(\mod A)\mid \Hom(\PP,\mathbf{X}[i])=0,\ \forall i>0\},$$
and
$$D^{\geq 0}(\PP)=\{\mathbf{X}\in D^b(\mod A)\mid \Hom(\PP,\mathbf{X}[i])=0,\ \forall i<0\}.$$

\begin{remark}\label{rmk:inter1}
    For any $\XX\in\D^{\leq -m}$, we have $\Hom(\PP,\XX[i])=0$ for any $i>0$, because $\PP$ is an $(m+1)$-term silting complex of projective modules and $\XX[i]\in\D^{\leq -(m+1)}$ for $i>0$. Hence,
    \begin{equation}\label{APP}
        \D^{\leq -m}\subseteq D^{\leq 0}(\PP).
    \end{equation}
    For any $\XX\in\D^{\geq 0}$, we have $\Hom(\PP,\XX[i])=0$ for any $i<0$, because $\PP\in\D^{\leq 0}$ and $\XX[i]\in\D^{\geq 1}$ for $i<0$. Hence,
    \begin{equation}\label{APP2}
        \D^{\geq 0}\subseteq D^{\geq 0}(\PP).
    \end{equation}
\end{remark}

A presilting complex $\PP\in K^b(\proj A)$ is called \emph{silting} if $\thick\PP=K^b(\proj A)$, where $\thick\PP$ denotes the smallest triangulated subcategory of $K^b(\proj A)$ containing $\PP$ and closed under taking direct summands. We provide a sufficient condition for an $(m+1)$-term presilting complex to be silting (which is also a necessary condition, see \Cref{prop:tpp1} below). This will be used in \Cref{sec:tau-tilting}.

\begin{proposition}\label{new}
    Let $\PP$ be an $(m+1)$-term presilting complex in $K^b(\proj A)$. Suppose that for any $\XX\in\T(\PP)$, there is a triangle
    \[\ZZ\to\TT_0\to\XX\to\ZZ[1],\]
    with $\TT_0\in\add H^{[-(m-1),0]}(\PP)$ and $\ZZ\in\emod{m}{A}$. Then $\PP$ is silting.
\end{proposition}

\begin{proof}
    We first claim that
    \begin{equation}\label{eq:=pp}
        D^{\leq 0}(\PP)\subseteq\D^{\leq 0}.
    \end{equation}
    Suppose, to the contrary, that there is an object $\XX\in D^{\leq 0}(\PP)$ which is not in $\D^{\leq 0}$. Then there exists a positive integer $\xi>0$ such that $H^{\xi}(\XX)\neq 0$ and $H^{j}(\XX)=0$ for any $j>\xi$. By canonical truncation of $\XX$, there is a triangle
    $$\sigma^{\leq -m+\xi}(\XX)\to\XX\to H^{[-(m-1)+\xi,\xi]}(\XX)\to \sigma^{\leq -m+\xi}(\XX)[1].$$
    Applying $\Hom(\PP,-)$ to this triangle, we get an exact sequence for any integer $i$,
    $$\Hom(\PP,\XX[i])\to \Hom(\PP,H^{[-(m-1)+\xi,\xi]}(\XX)[i])\to \Hom(\PP,\sigma^{\leq -m+\xi}(\XX)[i+1]).$$
    By $\XX\in D^{\leq 0}(\PP)$, the first item is zero for any $i\geq 1$. Because $\PP$ is an $(m+1)$-term complex of projective modules and $\sigma^{\leq -m+\xi}(\XX)[i+1]\in \D^{\leq -m+\xi-i-1}$, the last item is zero for any $i\geq \xi$. Hence, $\Hom(\PP,H^{[-(m-1)+\xi,\xi]}(\XX)[i])=0$ for any $i\geq\xi$. Let $\MM=H^{[-(m-1)+\xi,\xi]}(\XX)[\xi]\in\emod{m}{A}$. Then $\Hom(\PP,\MM[i])=0$ for any $i\geq 0$. So $\MM\in\T(\PP)$. By \Cref{lem:bz1-2.7}, we have $\Hom(H^{[-(m-1),0]}(\PP),\MM)=0$. By the assumption of the proposition, there is a triangle
    \[\ZZ\to\TT_0\to\MM\to\ZZ[1],\]
    with $\TT_0\in\add H^{[-(m-1),0]}(\PP)$ and $\ZZ\in\emod{m}{A}$. Since the morphism from $\TT_0$ to $\MM$ is zero, it follows that $\MM[-1]$ is direct summand of $\ZZ$ and hence is in $\emod{m}{A}$. So $H^0(\MM)=H^{1}(\MM[-1])=0$, which implies $H^{\xi}(\XX)=H^{\xi}(H^{[-(m-1)+\xi,\xi]}(\XX))=H^0(\MM)=0$, a contradiction. Thus, we finish the proof of the claim \eqref{eq:=pp}.

    Next, for any $\XX\in D^{\leq 0}(\PP)$, take a right $\add\PP$-approximation $f$ of $\XX$ and extend it to a triangle in $D^b(\mod A)$
    $$\XX'\to \PP'\xrightarrow{f}\XX\to\XX'[1].$$
    Applying $\Hom(\PP,-)$ to it, there is a long exact sequence
    $$\begin{array}{cccccc}
         & \Hom(\PP,\PP')&\xrightarrow{\Hom(\PP,f)}&\Hom(\PP,\XX)&\to&\Hom(\PP,\XX'[1]) \\
         \to & \Hom(\PP,\PP'[1]) & \to & \cdots & \to & \cdots \\
         \to & \cdots & \to & \Hom(\PP,\XX[i]) & \to & \Hom(\PP,\XX'[i+1])\\
         \to & \Hom(\PP,\PP'[i+1]) & \to & \cdots
    \end{array}$$
    Since $f$ is a right $\add\PP$-approximation of $\XX$, the map $\Hom(\PP,f)$ is surjective. Since $\PP$ is presilting, $\Hom(\PP,\PP'[i])=0$ for any $i> 0$. Since $\XX\in D^{\leq 0}(\PP)$, we have $\Hom(\PP,\XX[i])=0$ for any $i> 0$. Hence, we have $\Hom(\PP,\XX'[i])=0$ for any $i>0$, which implies $\XX'\in D^{\leq 0}(\PP)$. Therefore, $D^{\leq 0}(\PP)\subseteq\add \PP* D^{\leq 0}(\PP)[1]$. Recursively, $D^{\leq 0}(\PP)\subseteq\add \PP*\add\PP[1]*\cdots*\add\PP[s]* D^{\leq 0}(\PP)[s+1]$ holds for any $s\geq 0$.
    
    Finally, to show $\PP$ is silting, it suffices to show $A[m]\in\thick(\PP)$.  Since $A[m]\in\D^{\leq -m}$, by \eqref{APP}, we have $A[m]\in D^{\leq 0}(\PP)\subseteq\add \PP*\add\PP[1]*\cdots*\add\PP[m]* D^{\leq 0}(\PP)[m+1]$. However, since by \eqref{eq:=pp}, $D^{\leq 0}(\PP)[m+1]\subseteq \D^{\leq -(m+1)}$, there is no non-zero morphism from $A[m]$ to any object in $D^{\leq 0}(\PP)[m+1]$. Thus, $A[m]$ is a direct summand of an object in $\add \PP*\add\PP[1]*\cdots*\add\PP[m]$. Therefore, $A[m]\in\thick(\PP)$.
\end{proof}

Throughout the rest of this section, let $\PP$ be an $(m+1)$-term silting complex. By \cite[Lemma~5.3]{KY}, the pair $(D^{\leq 0}(\PP),D^{\geq 0}(\PP))$ is a bounded $t$-structure on $D^b(\mod A)$ whose heart
$$\H(\PP)=\{\mathbf{X}\in D^b(\mod A)\mid \Hom(\PP,\mathbf{X}[i])=0,\ \forall i\neq 0\}$$
is equivalent to $\mod\End(\PP)$ by the functor $\Hom(\PP,-)$. 

We denote by $m\text{-}\H(\PP)$ the $m$-extended heart (see \Cref{def:exth}) of the $t$-structure $(D^{\leq 0}(\PP),D^{\geq 0}(\PP))$, that is,
$$\begin{array}{rcl}
   m\text{-}\H(\PP)  & = &  D^{\leq 0}(\PP)\cap D^{\geq -(m-1)}(\PP)\\
     & = & \{\mathbf{X}\in D^b(\mod A)\mid \Hom(\PP,\mathbf{X}[i])=0,\ \forall i\notin[-(m-1),0]\}.
\end{array}$$
For example, $A$ is an $(m+1)$-term silting complex in $K^b(\proj A)$, $D^{\leq 0}(A)=\D^{\leq 0}$, $D^{\geq 0}(A)=\D^{\geq 0}$ and $m\text{-}\H(A)=\emod{m}{A}$. 

\begin{remark}\label{rmk:interval}
    Since both $(\D^{\leq 0},\D^{\geq 0})$ and $(D^{\leq 0}(\PP),D^{\geq 0}(\PP))$ are t-structures, by \eqref{APP2}, we have 
    $$D^{\leq 0}(\PP)={}^\bot(D^{\geq 0}(\PP)[-1])\subseteq {}^\bot(\D^{\geq 0}[-1])=\D^{\leq 0}.$$
    In summary, we have
    $$\D^{\leq -m}\subseteq D^{\leq 0}(\PP)\subseteq \D^{\leq 0}.$$
\end{remark}

By \eqref{eq:tpp1} and \eqref{eq:fpp1}, we have
\[\T(\PP)=D^{\leq 0}(\PP)\cap \emod{m}{A}
\text{ and }
\F(\PP)=D^{\geq 1}(\PP)\cap \emod{m}{A}.
\]
Then we have the following direct application of \Cref{prop:bi} to the bounded derived category $D^b(\mod A)$. 

\begin{proposition}\label{lem:torA}
    $(\T(\PP),\F(\PP))$ is an $s$-torsion pair in $\emod{m}{A}$.
\end{proposition}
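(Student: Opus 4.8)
The plan is to obtain this as a direct application of \Cref{prop:bi} to the bounded derived category $\D = D^b(\mod A)$ equipped with its canonical $t$-structure $(\D^{\leq 0},\D^{\geq 0})$, whose $m$-extended heart $\D^{[-(m-1),0]}$ is exactly $\emod{m}{A}$. The candidate $t$-structure to feed into that bijection is $(D^{\leq 0}(\PP),D^{\geq 0}(\PP))$.

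First I would check the two hypotheses needed to invoke \Cref{prop:bi}. That $(D^{\leq 0}(\PP),D^{\geq 0}(\PP))$ is a bounded $t$-structure on $D^b(\mod A)$ is \cite[Lemma~5.3]{KY}, using that $\PP$ is a silting complex. The interval condition $\D^{\leq -m}\subseteq D^{\leq 0}(\PP)\subseteq \D^{\leq 0}$ is precisely the content of \Cref{rmk:interval}, which uses that $\PP$ is an $(m+1)$-term complex of projective modules. Hence \Cref{prop:bi} applies and yields that
\[\left(D^{\leq 0}(\PP)\cap\emod{m}{A},\ D^{\geq 1}(\PP)\cap\emod{m}{A}\right)\]
is an $s$-torsion pair in $\emod{m}{A}$.

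It then remains only to identify these two subcategories with $\T(\PP)$ and $\F(\PP)$. An object $\XX\in\emod{m}{A}$ lies in $D^{\leq 0}(\PP)$ iff $\Hom(\PP,\XX[j])=0$ for all $j\geq 1$, which by \eqref{eq:tpp1} is exactly the defining condition for $\T(\PP)$; similarly $\XX\in D^{\geq 1}(\PP)$ iff $\Hom(\PP,\XX[j])=0$ for all $j\leq 0$, which by \eqref{eq:fpp1} is exactly the defining condition for $\F(\PP)$. These identifications are already recorded in the display immediately preceding the statement, so the proof is just the assembly of \Cref{prop:bi}, \cite[Lemma~5.3]{KY}, and \Cref{rmk:interval}. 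There is essentially no obstacle; the only point deserving a moment's attention is to verify the interval hypothesis in the correct orientation, namely that $D^{\leq 0}(\PP)$ is sandwiched between $\D^{\leq -m}$ and $\D^{\leq 0}$, and this is exactly what \Cref{rmk:interval} supplies.
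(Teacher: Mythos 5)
Your proposal is correct and is exactly the paper's argument: the paper states this proposition as "a direct application of \Cref{prop:bi}," relying on \cite[Lemma~5.3]{KY} for the bounded $t$-structure, on \Cref{rmk:interval} for the sandwich condition $\D^{\leq -m}\subseteq D^{\leq 0}(\PP)\subseteq\D^{\leq 0}$, and on the identifications $\T(\PP)=D^{\leq 0}(\PP)\cap\emod{m}{A}$ and $\F(\PP)=D^{\geq 1}(\PP)\cap\emod{m}{A}$ from \eqref{eq:tpp1} and \eqref{eq:fpp1}. Nothing is missing.
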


In the usual case (i.e., $m=1$), $(\mathcal{T}(\PP),\mathcal{F}(\PP))$ is an ($s$-)torsion pair in $\mod A$. A result stronger than \Cref{lem:torA} is shown in \cite{G}. See \Cref{rmk:ff} for a further discussion.

\begin{theorem}[{\cite[Theorem~4.1]{G}}]\label{rmk:G}
    The correspondence $\PP\mapsto(\T(\PP),\F(\PP))$ is a bijection from the set of (isoclasses of) basic $(m+1)$-term silting complexes in $K^b(\proj A)$ to the set of functorially finite $s$-torsion pairs in $\emod{m}{A}$.
\end{theorem}

It is also shown in \cite[Proposition~4.8]{G} that functorially finite $s$-torsion pairs are exactly functorially finite positive torsion pairs introduced there.

\begin{notation-remark}\label{tf}
    For any object $\XX$ in $\emod{m}{A}$, we denote by $\mathfrak{t}(\XX)\in\T(\PP)$ and $\mathfrak{f}(\XX)\in\F(\PP)$ the objects that fit into the following triangle
    $$\mathfrak{t}(\XX)\to\XX\to \mathfrak{f}(\XX)\to \mathfrak{t}(\XX)[1].$$
    This triangle is unique up to isomorphism (\cite[Proposition~3.7]{AET}), and is called the canonical triangle of $\XX$ with respect to the $s$-torsion pair $(\T(\PP),\F(\PP))$. In particular, the morphism $\mathfrak{t}(\XX)\to\XX$ (resp. $\XX\to \mathfrak{f}(\XX)$) in the triangle is a right (resp. left) minimal $\T(\PP)$-approximation ($\F(\PP)$-approximation) of $\XX$. Here, the minimality follows from the fact that otherwise, there is a nonzero direct summand $\YY$ of $\mathfrak{t}(\XX)$ whose shift $\YY[1]$ is a direct summand of $\mathfrak{f}(\XX)$, a contradiction with $\Hom(\T(\PP),\F(\PP)[-1])=0$.
\end{notation-remark}

\begin{lemma}\label{lem:extinj}
    The object $\mathfrak{t}(\nu A[m-1])$ is injective in $\T(\PP)$. The object $\mathfrak{f}(A)$ is projective in $\F(\PP)$.
\end{lemma}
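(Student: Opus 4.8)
The plan is to derive both assertions from the canonical triangles of \Cref{tf}, reducing each to the vanishing of a single $\Hom$-space between $\T(\PP)$ and $\F(\PP)$ — which is immediate from the torsion-pair condition $\Hom(\T(\PP),\F(\PP))=0$ — together with one cohomological vanishing, coming from the projectivity of $A$ in the first case and from the Nakayama duality~\eqref{eq:Nakayama} in the second.

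\textbf{Projectivity of $f(A)$.} Since $A\in\proj A\subseteq\emod{m}{A}$, the canonical triangle $t(A)\to A\to f(A)\to t(A)[1]$ is defined, with $t(A)\in\T(\PP)$ and $f(A)\in\F(\PP)$. To see that $f(A)$ is projective in $\F(\PP)$, I would fix $\FF\in\F(\PP)$ and apply $\Hom(-,\FF[1])$ to this triangle, obtaining the exact sequence
$$\Hom(t(A),\FF)\to\Hom(f(A),\FF[1])\to\Hom(A,\FF[1]).$$
The left-hand term vanishes because $(\T(\PP),\F(\PP))$ is a torsion pair, and the right-hand term equals $H^{1}(\FF)=0$ since $A$ is projective and $\FF\in\emod{m}{A}\subseteq\D^{\leq 0}$. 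Hence $\mathbb{E}(f(A),\FF)=\Hom(f(A),\FF[1])=0$ for all $\FF\in\F(\PP)$.

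\textbf{Injectivity of $t(\nu A[m-1])$.} Since $\nu A\in\inj A\subseteq\mod A$, we have $\nu A[m-1]\in\emod{m}{A}$ (it is concentrated in degree $-(m-1)$), so the canonical triangle $t(\nu A[m-1])\to\nu A[m-1]\to f(\nu A[m-1])\to t(\nu A[m-1])[1]$ is defined. To see that $t(\nu A[m-1])$ is injective in $\T(\PP)$, I would fix $\TT\in\T(\PP)$ and apply $\Hom(\TT,-)$, obtaining the exact sequence
$$\Hom(\TT,f(\nu A[m-1]))\to\Hom(\TT,t(\nu A[m-1])[1])\to\Hom(\TT,\nu A[m]).$$
The left-hand term vanishes by the torsion-pair condition. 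For the right-hand term, applying \eqref{eq:Nakayama} with $\XX=A[m]\in K^b(\proj A)$ and $\YY=\TT$ (and using that $\nu$ commutes with shifts) gives $\Hom(\TT,\nu A[m])\cong D\Hom(A[m],\TT)=D\,H^{-m}(\TT)$, which is $0$ because $\TT\in\emod{m}{A}\subseteq\D^{\geq -(m-1)}$. Hence $\mathbb{E}(\TT,t(\nu A[m-1]))=\Hom(\TT,t(\nu A[m-1])[1])=0$ for all $\TT\in\T(\PP)$.

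Both computations are purely formal, so I do not foresee a genuine obstacle. The only point requiring care is feeding the correctly shifted complex $A[m]$ into the Nakayama duality, so that the cohomology produced, $H^{-m}(\TT)$, lies strictly below the window $[-(m-1),0]$ — the exact mirror of the degree-$+1$ cohomology $H^{1}(\FF)$ used in the first part, reflecting the duality between the two statements.
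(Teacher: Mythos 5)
Your proof is correct and follows essentially the same route as the paper: the paper proves the injectivity of $t(\nu A[m-1])$ by applying $\Hom(\TT,-)$ to the canonical triangle and killing the outer terms via the torsion-pair condition and the Nakayama duality $\Hom(\TT,\nu A[m])\cong D\Hom(A[m],\TT)=0$, exactly as you do, and then declares the projectivity of $f(A)$ "similar," which is precisely the dual computation you carry out explicitly with $\Hom(A,\FF[1])=H^1(\FF)=0$.
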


\begin{proof}
    We only show the first assertion because the second one can be proved similarly. Take the canonical triangle of $\nu A[m-1]$ with respect to $(\T(\PP),\F(\PP))$
    \[\mathfrak{t}(\nu A[m-1])\to \nu A[m-1]\to \mathfrak{f}(\nu A[m-1])\to (\mathfrak{t}(\nu A[m-1]))[1].\]
    For any $\TT\in\T(\PP)$, applying $\Hom(\TT,-)$ to the triangle, we have an exact sequence
    \[\Hom(\TT,\mathfrak{f}(\nu A[m-1]))\to \Hom(\TT,\mathfrak{t}(\nu A[m-1])[1])\to \Hom(\TT,\nu A[m-1][1]),\]
    where the first item is zero thanks to $\mathfrak{f}(\nu A[m-1]))\in\F(\PP)$, and the last item $\Hom(\TT,\nu A[m])$, by the duality~\eqref{eq:Nakayama}, is isomorphic to $D\Hom(A[m],\TT)=0$, due to $A[m]\in\D^{\leq -m}$ and $\TT\in \T(\PP)\subseteq \D^{[-(m-1),0]}$. So we have $\mathbb{E}(\TT,\mathfrak{t}(\nu A[m-1]))=\Hom(\TT,\mathfrak{t}(\nu A[m-1])[1])=0$. Then $\mathfrak{t}(\nu A[m-1])$ is injective in $\T(\PP)$.
\end{proof}

The following proposition tells us that both of the extriangulated categories $\T(\PP)$ and $\F(\PP)$ contain enough projective objects and enough injective objects.

\begin{proposition}\label{prop:tpp1}
    Let $\PP$ be an $(m+1)$-term silting complex in $K^b(\proj A)$. The following hold.
    \begin{enumerate}
    \item[(a)] For any $\XX\in\T(\PP)$, there is a triangle
    \[\ZZ\to\TT_0\to\XX\to\ZZ[1],\]
    with $\TT_0\in\add H^{[-(m-1),0]}(\PP)$ and $\ZZ\in\T(\PP)$.
    \item[(b)] For any $\XX\in\T(\PP)$, $\XX$ is projective in $\T(\PP)$ if and only if $\XX$ belongs to $\add H^{[-(m-1),0]}(\PP)$.
    \item[(c)] For any $\XX\in \T(\PP)$, there is a triangle
    \[\XX\to\TT_0\to \ZZ\to\XX[1],\]
    with $\TT_0\in\add \mathfrak{t}(\nu A[m-1])$ and $\ZZ\in\T(\PP)$.
    \item[(d)] For any $\XX\in\T(\PP)$, $\XX$ is injective in $\T(\PP)$ if and only if $\XX$ belongs to $\add \mathfrak{t}(\nu A[m-1])$.
    \item[(e)] For any $\XX\in\F(\PP)$, there is a triangle
    \[\XX\to\FF_0\to\ZZ\to\XX[1],\]
    with $\FF_0\in\add H^{[-(m-1),0]}(\nu\PP[-1])$ and $\ZZ\in\F(\PP)$.
    \item[(f)] For any $\XX\in\F(\PP)$, $\XX$ is injective in $\F(\PP)$ if and only if $\XX$ belongs to $\add H^{[-(m-1),0]}(\nu\PP[-1])$.
    \item[(g)] For any $\XX\in\F(\PP)$, there is a triangle
    \[\ZZ\to\FF_0\to\XX\to\ZZ[1],\]
    with $\FF_0\in\add \mathfrak{f}(A)$ and $\ZZ\in\F(\PP)$.
    \item[(h)] For any $\XX\in\F(\PP)$, $\XX$ is projective in $\F(\PP)$ if and only if $\XX$ belongs to $\add \mathfrak{f}(A)$.
    \end{enumerate}
\end{proposition}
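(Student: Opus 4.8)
The eight assertions fall into four ``enough (co)generators'' statements (a), (c), (e), (g) and four ``characterization'' statements (b), (d), (f), (h), and the latter will be deduced formally from the former. For instance, granting (a), let $\XX\in\T(\PP)$ be projective; the triangle of (a) is a conflation $\ZZ\to\TT_0\to\XX$ in the extriangulated category $\T(\PP)$ whose class lies in $\mathbb{E}(\XX,\ZZ)$, and projectivity of $\XX$ makes this class vanish, so the conflation splits and $\XX\in\add H^{[-(m-1),0]}(\PP)$; conversely all of $\add H^{[-(m-1),0]}(\PP)$ is projective in $\T(\PP)$ by \Cref{lem:proj}(a), since summands of projectives are projective. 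This proves (b), and the identical argument --- with the roles of $\mathbb{E}(\XX,\ZZ)$ and $\mathbb{E}(\ZZ,\XX)$ interchanged for the injective statements and using the appropriate one of \Cref{lem:proj} and \Cref{lem:extinj} --- yields (d) from (c), (f) from (e), and (h) from (g). So the content is in (a), (c), (e), (g); throughout we use that $\emod{m}{A}$ is Hom-finite and Krull--Schmidt, so that left and right $\add(-)$-approximations of its objects exist.

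\emph{The projective side, parts (a) and (g).} For (a), take a right $\add H^{[-(m-1),0]}(\PP)$-approximation $f\colon\TT_0\to\XX$ and complete it to a triangle $\ZZ\to\TT_0\xrightarrow{f}\XX\to\ZZ[1]$. Applying $\Hom(\PP,-)$ and using \eqref{eq:iso} to identify $\Hom(\PP,-)$ with $\Hom(H^{[-(m-1),0]}(\PP),-)$ on $\emod{m}{A}$, the approximation property makes $\Hom(\PP,\TT_0)\to\Hom(\PP,\XX)$ surjective, hence $\Hom(\PP,\ZZ[j])=0$ for all $j\ge 1$ and thus $\ZZ\in D^{\le 0}(\PP)$. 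By \Cref{rmk:interval}, $D^{\le 0}(\PP)\subseteq\D^{\le 0}$, so $H^j(\ZZ)=0$ for $j\ge 1$; and the cohomology long exact sequence kills $H^j(\ZZ)$ for $j\le -m$ since $\TT_0,\XX\in\emod{m}{A}$. Therefore $\ZZ\in D^{\le 0}(\PP)\cap\emod{m}{A}=\T(\PP)$. For (g) one argues dually inside $\mod A$: choose a surjection $P\twoheadrightarrow H^0(\XX)$ with $P$ free, lift it to $p\colon P\to\XX$ (possible because $P$ is a projective module and $\Hom(P,\sigma^{\le -1}\XX[1])=\Hom_A(P,H^0(\sigma^{\le -1}\XX[1]))=0$), and factor $p$ through $P\to f(P)$ using $\Hom(t(P),\XX)=0$; then $\FF_0:=f(P)\in\add f(A)$, and the cocone $\ZZ$ of $\FF_0\to\XX$ has $\Hom(\PP,\ZZ[j])=0$ for $j\le 0$ straight from the long exact sequence (both $\FF_0,\XX\in\F(\PP)$), while $\ZZ\in\emod{m}{A}$ follows from the cohomology sequence once one notes that $H^0(\FF_0)\to H^0(\XX)$ is onto (it factors $P\twoheadrightarrow H^0(\XX)$). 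Hence $\ZZ\in\F(\PP)$.

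\emph{The injective side, parts (c) and (e).} For (c), put $M:=H^{-(m-1)}(\XX)$, embed $M\hookrightarrow I$ into an injective module $I\in\inj A=\add\nu A$, and extend $(M\hookrightarrow I)[m-1]\colon M[m-1]\to I[m-1]$ along the truncation morphism $M[m-1]\to\XX$ to some $g\colon\XX\to I[m-1]$; the obstruction lies in $\Hom(\sigma^{\ge -(m-2)}\XX,I[m])\cong\Hom_A(H^{-m}(\sigma^{\ge -(m-2)}\XX),I)=0$. Since $\Hom(\XX,f(I[m-1]))=0$, the map $g$ factors through the canonical morphism $t(I[m-1])\to I[m-1]$, giving $g'\colon\XX\to\TT_0$ with $\TT_0:=t(I[m-1])\in\add t(\nu A[m-1])$; let $\ZZ$ be the cone of $g'$. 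From the triangle $\XX\to\TT_0\to\ZZ\to\XX[1]$ and $\Hom(\PP,-)$ one gets $\ZZ\in D^{\le 0}(\PP)$ at once, and $\ZZ\in\emod{m}{A}$ follows from the cohomology sequence provided $H^{-(m-1)}(g')$ is injective --- which it is, being a factor of the injection $H^{-(m-1)}(g)=(M\hookrightarrow I)$, because $H^{-(m-1)}(t(I[m-1])\to I[m-1])$ is injective (as $f(I[m-1])\in\emod{m}{A}$). Thus $\ZZ\in\T(\PP)$. For (e), take a left $\add H^{[-(m-1),0]}(\nu\PP[-1])$-approximation $g\colon\XX\to\FF_0$ with cone $\ZZ$. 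The vanishing $\Hom(\PP,\ZZ[j])=0$ for $j\le -1$ is automatic (co-aisles are closed under cocones, and $\XX,\FF_0\in\F(\PP)\subseteq D^{\ge 1}(\PP)$), and $\Hom(\PP,\ZZ)=0$ is equivalent to the injectivity of $\Hom(\PP,\XX[1])\to\Hom(\PP,\FF_0[1])$, which by \eqref{eq:Nakayama} is $D$-dual to the surjectivity of $\Hom(\FF_0,\nu\PP[-1])\to\Hom(\XX,\nu\PP[-1])$; via the functorial isomorphism $\Hom(-,\nu\PP[-1])\cong\Hom(-,H^{[-(m-1),0]}(\nu\PP[-1]))$ on $\emod{m}{A}$ coming from the truncation triangle \eqref{eq:nuP}, this last surjectivity is exactly the approximation property of $g$. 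Hence $\ZZ\in D^{\ge 1}(\PP)\subseteq\D^{\ge -(m-1)}$, and the cohomology sequence gives $\ZZ\in\emod{m}{A}$, so $\ZZ\in\F(\PP)$.

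\emph{The main obstacle.} The recipe ``take a (co)approximation and complete to a triangle'' is routine; the real issue is to confirm that the third term $\ZZ$ remains inside $\emod{m}{A}$ rather than only in the (larger) aisle or co-aisle of the $\PP$-$t$-structure. On the projective side this is handled cleanly by the inclusions of \Cref{rmk:interval}. The delicate cases are (c) and (e): in (c) one must guarantee that the morphism defining $\ZZ$ is injective on the bottom cohomology $H^{-(m-1)}$, and it is precisely this requirement that forces the detour through an honest injective module of $\mod A$ instead of working directly with $t(\nu A[m-1])$; in (e) the subtle step is converting the needed (co)surjectivity of $\Hom(\PP,-)$ into the approximation property, which genuinely requires the Nakayama duality together with the cohomology-truncation triangle \eqref{eq:nuP}. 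Once these points are settled, (b), (d), (f), (h) are immediate.
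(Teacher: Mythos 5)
Your proof is correct, and while it follows the same overall skeleton as the paper's (construct an approximation triangle, check the third term stays in $\T(\PP)$ or $\F(\PP)$, then obtain (b), (d), (f), (h) by splitting the conflation against a (co)projectivity hypothesis together with \Cref{lem:proj} and \Cref{lem:extinj}), the execution of the hard parts is genuinely different. For (a) the paper takes a right $\add\PP$-approximation, truncates $\PP_\XX$, and runs the octahedral axiom to replace $\PP_\XX$ by $H^{[-(m-1),0]}(\PP_\XX)$; you instead approximate by $\add H^{[-(m-1),0]}(\PP)$ directly and use the functorial isomorphism of \Cref{lem:bz1-2.7} to transfer the approximation property to $\Hom(\PP,-)$, which shortens the argument and avoids the octahedron. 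For (c) the divergence is larger: the paper uses a left $\add\nu A[m-1]$-approximation and a Nakayama-duality computation to force $\YY\in\D^{\geq-(m-1)}$, whereas you isolate the only possible obstruction --- the kernel of $H^{-(m-1)}(g')$, which would create $H^{-m}$ of the cone --- and kill it by choosing the map into $\inj A[m-1]$ to restrict to an injective envelope of $H^{-(m-1)}(\XX)$; this is more elementary and makes visible exactly why the detour through an honest injective module is needed. Your explicit treatments of (e) and (g) (which the paper dismisses as dual) are also sound: the conversion in (e) of the vanishing $\Hom(\PP,\ZZ)=0$ into the left-approximation property via \eqref{eq:Nakayama} and the truncation triangle \eqref{eq:nuP} is exactly the right dual of (a), and in (g) the surjectivity of $H^0(\FF_0)\to H^0(\XX)$ is correctly engineered through the lift of a projective cover. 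The trade-off is that the paper's route is uniform with its other octahedral arguments (e.g.\ \Cref{prop:hrs}), while yours is shorter and more cohomologically transparent.
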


\begin{proof}
    We only prove (a)-(d) since (e)-(h) can be proved dually.

    (a) For any $\XX\in\T(\PP)$, take a right $\add\PP$-approximation $f_{\XX}:\PP_\XX\to\XX$ of $\XX$, where $\PP_\XX\in\add\PP$. Extended $f_{\XX}$ to a triangle in $D^b(\mod A)$:
    $$\YY\to\PP_{\XX}\xrightarrow{f_{\XX}} \XX\to\YY[1].$$
    Applying $\Hom(\PP,-)$ to this triangle, we get a long exact sequence
    $$\begin{array}{cccccc}
        &\Hom(\PP,\PP_{\XX}) & \xrightarrow{\Hom(\PP,f_\XX)} & \Hom(\PP,\XX) & \to & \Hom(\PP,\YY[1])  \\
        \to & \Hom(\PP,\PP_{\XX}[1]) & \to & \cdots & \to & \cdots \\
        \to & \cdots & \to & \Hom(\PP,\XX[i]) & \to & \Hom(\PP,\YY[i+1])  \\
        \to & \Hom(\PP,\PP_{\XX}[i+1]) & \to & \cdots & \\
    \end{array}$$
    Since $\PP$ is silting, we have $\Hom(\PP,\PP_{\XX}[i])=0$ for any $i\geq 1$. Since $\XX\in\T(\PP)$, by \eqref{eq:tpp1}, we have $\Hom(\PP,\XX[i])=0$ for any $i\geq 1$. Since $f_\XX$ is a right $\add\PP$-approximation of $\XX$, the map $\Hom(\PP,f_\XX)$ is surjective. Hence $\Hom(\PP,\YY[i])=0$ for any $i\geq 1$. So $\YY\in D^{\leq 0}(\PP)$.

    By canonical truncation of $\PP_\XX$, there is a triangle
    $$H^{-m}(\PP_\XX)[m]\to\PP_\XX\xrightarrow{g} H^{[-(m-1),0]}(\PP_\XX)\to H^{-m}(\PP_\XX)[m+1].$$
    Since due to $H^{-m}(\PP_\XX)[m]\in\D^{\leq -m}$ and $\XX\in\D^{[-(m-1),0]}$, there is no nonzero morphism from $H^{-m}(\PP_\XX)[m]$ to $\XX$, the morphism $f_\XX:\PP_\XX\to\XX$ factors through $g$. Hence, by the octahedral axiom, we have the following commutative diagram of triangles
    \[
    \xymatrix{
    &H^{-m}(\PP_{\XX})[m]\ar[d]\ar@{=}[r]&H^{-m}(\PP_{\XX})[m]\ar[d]\\
    \XX[-1]\ar[r]\ar@{=}[d]&\YY\ar[r]\ar[d]&\PP_{\XX}\ar[r]^{f_{\XX}}\ar[d]^{g}&\XX\ar@{=}[d]\\
    \XX[-1]\ar[r]&\ZZ\ar[r]\ar[d]&H^{[-(m-1),0]}(\PP_{\XX})\ar[r]\ar[d]&\XX\\
    &H^{-m}(\PP_{\XX})[m+1]\ar@{=}[r]&H^{-m}(\PP_{\XX})[m+1]
    }
    \]
    By the triangle in the third row, we have 
    $$\ZZ\in\add\XX[-1]*\add H^{[-(m-1),0]}(\PP_\XX) \subseteq\D^{[-(m-1),0]}[-1]*\D^{[-(m-1),0]}\subseteq \D^{[-(m-1),1]}.$$ 
    Since $H^{-m}(\PP_\XX)[m+1]\in\D^{\leq -(m+1)}\subseteq D^{\leq -1}(\PP)$, by the triangle in the second column of the above diagram, we have
    $$\ZZ\in\add\YY*\add H^{-m}(\PP_\XX)[m+1]\subseteq D^{\leq 0}(\PP)*D^{\leq -1}(\PP)\subseteq D^{\leq 0}(\PP)\subseteq \D^{\leq 0}.$$ 
    Hence, we have $\ZZ\in \D^{[-(m-1),0]}\cap D^{\leq 0}(\PP)=\T(\PP)$.

    (b) By \Cref{lem:proj}~(a), $H^{[-(m-1),0]}(\PP)$ is projective in $\T(\PP)$. So we only need to show that any projective object $\XX$ in $\T(\PP)$ belongs to $\add H^{[-(m-1),0]}(\PP)$. By (a), there is a triangle
    \[\ZZ\to \TT_0\to \XX\to\ZZ[1],\]
    with $\ZZ\in\T(\PP)$ and $\TT_0\in\add H^{[-(m-1),0]}(\PP)$. Since $\XX$ is projective in $\T(\PP)$, this triangle splits. Hence $\XX$ is a direct summand of $\TT_0$. So $\XX\in\add H^{[-(m-1),0]}(\PP)$.

    (c) Let $\alpha:\XX\to I_0[m-1]$ be a left $\add\nu A[m-1]$-approximation of $\XX$, where $I_0\in\add\nu A$. Take the canonical triangle of $I_0[m-1]$ with respect to the torsion pair $(\T(\PP),\F(\PP))$
    $$\mathfrak{t}(I_0[m-1])\xrightarrow{\beta} I_0[m-1]\to \mathfrak{f}(I_0[m-1])\to \mathfrak{t}(I_0[m-1])[1].$$
    Since $\XX\in\T(\PP)$, the morphism $\alpha$ factors through $\beta$. Then, by the octahedral axiom, we have the following commutative diagram of triangles
    \[\xymatrix{
    &\XX\ar@{=}[r]\ar[d]&\XX\ar[d]^{\alpha}\\
    \mathfrak{f}(I_0[m-1])[-1]\ar[r]\ar@{=}[d]&\mathfrak{t}(I_0[m-1])\ar[r]^{\beta}\ar[d]&I_0[m-1]\ar[r]\ar[d]&f(I_0[m-1])\ar@{=}[d]\\
    \mathfrak{f}(I_0[m-1])[-1]\ar[r]&\ZZ\ar[r]\ar[d]&\YY\ar[r]\ar[d]&\mathfrak{f}(I_0[m-1])\\
    &\XX[1]\ar@{=}[r]&\XX[1]
    }
    \]
    By the triangle in the second column, it suffices to show $\ZZ\in\T(\PP)$. 
    Applying $\Hom(-,\nu A)$ to the triangle in the third column of the above diagram, we
    get an exact sequence
    \[
    \begin{array}{rl}
         & \Hom(I_0[m-1],\nu A[m-1])\xrightarrow{\Hom(\alpha,\nu A[m-1])}\Hom(\XX,\nu A[m-1]) \\
        \to & \Hom(\YY,\nu A[m])  \to\Hom(I_0[m-1],\nu A[m])
    \end{array}
    \]
    and exact sequences
    \[
    \Hom(\XX,\nu A[m+i])\to \Hom(\YY,\nu A[m+i+1])\to \Hom(I_0[m-1],\nu A[m+i+1]),\ i\geq 0.
    \]
    Note that the map $\Hom(\alpha,\nu A[m-1])$ is surjective, since $\alpha$ is a left $\add\nu A[m-1]$-approximation of $\XX$. Note also that by the duality~\eqref{eq:Nakayama}, for any $i\geq 0$, we have
    $$\Hom(I_0[m-1],\nu A[m+i])\cong D\Hom(A[m+i],I_0[m-1])\cong D\Hom(A[i+1],I_0)=0$$ and 
    $$\Hom(\XX,\nu A[m+i])\cong D\Hom(A[m+i],\XX)=0.$$
    Hence, we have $\Hom(\YY,\nu A[m+i])=0$ for any $i\geq 0$. So, by the duality~\eqref{eq:Nakayama}, $\Hom(A[m+i],\YY)\cong D\Hom(\YY,\nu A[m+i])=0$ for any $i\geq 0$, which implies $\YY\in \D^{\geq -(m-1)}$. Then, by the triangle in the third row of the above diagram, we have 
    $$\ZZ\in \add \mathfrak{f}(I_0[m-1])[-1] * \add\YY \subseteq \D^{[-(m-1),0]}[-1]\ast \D^{\geq -(m-1)}\subseteq \D^{\geq -(m-1)}.$$
    On the other hand, since both $\XX$ and $\mathfrak{t}(I_0[m-1])$ belong to $\T(\PP)\subseteq D^{\leq 0}(\PP)$ and since $D^{\leq 0}(\PP)$ is closed under taking $[1]$, by the triangle in the second column of the above diagram, we have $\ZZ\in D^{\leq 0}(\PP)\subseteq\D^{\leq 0}$. Therefore, we have $\ZZ\in D^{\leq 0}(\PP)\cap\D^{\leq 0}\cap\D^{\geq -(m-1)}=\T(\PP)$.

    (d) Using \Cref{lem:extinj} and (c), the proof is similar to that of (b).
\end{proof}

We refer to \cite[Proposition~2.8]{BZ1} for the usual case (i.e., $m=1$) of the above properties of $\T(\PP)$ and $\F(\PP)$.

Now we are ready to show the main result in this section.

\begin{theorem}\label{prop:genTP}
    Let $\PP$ be an $(m+1)$-term silting complex in $K^b(\proj A)$. Then
    $$\T(\PP)=\Fac_{m}\left(H^{[-(m-1),0]}(\PP)\right)\text{ and }\F(\PP)=\Sub_{m}\left(H^{[-(m-1),0]}(\nu\PP[-1])\right).$$
\end{theorem}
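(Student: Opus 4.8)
The plan is to prove the two equalities separately, and only the first one, since the second follows dually (replacing $\PP$ by $\nu\PP[-1]$ and swapping $\Fac$ for $\Sub$, using the duality~\eqref{eq:Nakayama}). For the inclusion $\Fac_{m}(H^{[-(m-1),0]}(\PP))\subseteq\T(\PP)$, I would first recall from \Cref{lem:proj}~(a) that $H^{[-(m-1),0]}(\PP)$ lies in $\T(\PP)$, hence $\add H^{[-(m-1),0]}(\PP)\subseteq\T(\PP)$; then, since $\T(\PP)$ is closed under $m$-factors by \Cref{lem:fac}, we get $\Fac_{m}(H^{[-(m-1),0]}(\PP))=\Fac_m(\add H^{[-(m-1),0]}(\PP))\subseteq\Fac_m(\T(\PP))\subseteq\T(\PP)$. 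This direction is essentially immediate from the lemmas already in place.

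The substantive direction is $\T(\PP)\subseteq\Fac_{m}(H^{[-(m-1),0]}(\PP))$. Here the key engine is \Cref{prop:tpp1}~(a): for any $\XX\in\T(\PP)$ there is a triangle $\ZZ\to\TT_0\to\XX\to\ZZ[1]$ with $\TT_0\in\add H^{[-(m-1),0]}(\PP)$ and $\ZZ\in\T(\PP)$. Rotating this to $\XX[-1]\to\ZZ\to\TT_0\to\XX$ (equivalently reading the triangle $\ZZ\to\TT_0\to\XX\to\ZZ[1]$ as the first of the $m$ triangles in \eqref{eq:fac}) gives one step of the $m$-factor filtration with $\ZZ_1=\ZZ$. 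I would then iterate: apply \Cref{prop:tpp1}~(a) to $\ZZ\in\T(\PP)$ to peel off another copy of something in $\add H^{[-(m-1),0]}(\PP)$, and so on. After $m$ steps one obtains triangles $\ZZ_i\to\TT_{i}\to\ZZ_{i-1}\to\ZZ_i[1]$ for $1\le i\le m$ with $\ZZ_0=\XX$, all $\ZZ_i\in\T(\PP)\subseteq\emod{m}{A}$, and $\TT_i\in\add H^{[-(m-1),0]}(\PP)$; this is precisely a witness that $\XX\in\Fac_m(H^{[-(m-1),0]}(\PP))$, with no need for the filtration to terminate (it doesn't have to; $m$ steps suffice by definition).

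The point that needs a little care — and which I expect to be the main obstacle — is checking that one really can iterate \Cref{prop:tpp1}~(a) exactly $m$ times with all intermediate objects staying inside $\emod{m}{A}$; but this is automatic, since each $\ZZ_i$ produced by \Cref{prop:tpp1}~(a) already lies in $\T(\PP)\subseteq\emod{m}{A}$, so the hypothesis of the lemma is satisfied at every stage and the definition of $\Fac_m$ is met verbatim. Thus the whole argument reduces to: $\T(\PP)$ closed under $m$-factors and containing $H^{[-(m-1),0]}(\PP)$ gives one inclusion, while $m$-fold application of the ``enough projectives'' triangle of \Cref{prop:tpp1}~(a) gives the other. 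For the second equality one runs the dual argument using \Cref{prop:tpp1}~(e), \Cref{lem:fac} (the $\Sub$ half), and \Cref{lem:proj}~(b), so no new idea is required.
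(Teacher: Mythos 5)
Your proposal is correct and follows essentially the same route as the paper: the inclusion $\Fac_{m}(H^{[-(m-1),0]}(\PP))\subseteq\T(\PP)$ from \Cref{lem:proj} together with \Cref{lem:fac}, and the reverse inclusion by iterating the ``enough projectives'' triangle of \Cref{prop:tpp1} $m$ times (which, as you note, immediately produces the $m$ triangles required by \Cref{def:sfac} since each intermediate object stays in $\T(\PP)\subseteq\emod{m}{A}$). The only cosmetic remark is that no rotation of the triangle from \Cref{prop:tpp1}~(a) is needed, as it is already in the form \eqref{eq:fac0}.
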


\begin{proof}
    We only show the first equality since the second one can be proved similarly. Using \Cref{prop:tpp1}~(a) repeatedly, we have $\T(\PP)\subseteq\Fac_{m}(H^{[-(m-1),0]}(\PP))$. The converse inclusion follows from \Cref{lem:proj} and \Cref{lem:fac}.
\end{proof}

We refer to \Cref{ex:AR} for an example of the $s$-torsion pair $(\T(\PP),\F(\PP))$ induced by a silting complex $\PP$.

\begin{remark}\label{rmk:ff}
    By \Cref{rmk:G} and \Cref{prop:genTP}, for any $s$-torsion pair $(\T,\F)$, if it is functorially finite then $\T=\Fac_{m}(\XX)$ for some object $\XX$ in $\emod{m}{A}$. In the usual case (i.e., $m=1$), the converse is also true, see \cite{S}.
\end{remark}

\section{Auslander-Reiten theory in extended module categories}\label{sec:AR}

In this section, we study the Auslander-Reiten theory in the $m$-extended module category $\emod{m}{A}$. Recall that the category $\emod{m}{A}$ is an extriangulated category with
$\mathbb{E}(\XX,\YY)=\Hom(\XX,\YY[1])$. 

\begin{lemma}\label{lem:enough}
    The extended module category $\emod{m}{A}$ has enough projective objects $\proj A$ and enough injective objects $(\inj A)[m-1]$.
\end{lemma}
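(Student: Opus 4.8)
The plan is to exhibit, for each object $\XX \in \emod{m}{A}$, an ``$\mathbb{E}$-epimorphism'' from an object of $\add(\proj A)$ onto $\XX$ and, dually, an ``$\mathbb{E}$-monomorphism'' from $\XX$ into an object of $\add((\inj A)[m-1])$; here the relevant biadditive bifunctor is $\mathbb{E}(\XX,\YY) = \Hom(\XX,\YY[1])$. Recall that in an extriangulated category, having enough projectives means that for every object $\XX$ there is a deflation $\PP \to \XX$ with $\PP$ projective, i.e.\ a conflation $\YY' \to \PP \to \XX$; in the present setting a conflation is (the base of) a triangle $\YY' \to \PP \to \XX \to \YY'[1]$ in $D^b(\mod A)$ all of whose terms lie in $\emod{m}{A}$. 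So I first need to check that $\proj A \subseteq \emod{m}{A}$ is a class of projective objects, which is immediate: for $P \in \proj A$ and any $\XX \in \emod{m}{A}$ we have $\mathbb{E}(P,\XX) = \Hom(P,\XX[1]) \cong H^1(\XX)$ (or $= 0$ directly since $\Hom_{D^b}(P,-)$ computes $H^0$), so $\mathbb{E}(P,\XX) = 0$ because $\XX[1] \in \D^{\geq -m} \cap \D^{\leq -1}$ has vanishing $H^0$; dually, $(\inj A)[m-1]$ consists of injective objects since $\mathbb{E}(\XX, I[m-1]) = \Hom(\XX, I[m]) \cong D\Hom(I[m], \nu^{-} \XX)$-type reasoning, or more simply $\Hom(\XX, I[m]) \cong H^{-(m-1)}(\XX)'$ paired against $I$, which vanishes since $\XX \in \D^{\geq -(m-1)}$ forces $\Hom_{D^b}(\XX, I[m]) = 0$.

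The substantive part is constructing enough projectives. Given $\XX \in \emod{m}{A}$, take a projective resolution of the underlying complex in the following way: cover each cohomology, or more efficiently, use that $\D^{\leq 0}$ has enough projectives in the usual $t$-structure sense. Concretely, choose a projective module $P$ together with a surjection $P \twoheadrightarrow H^0(\XX)$; this lifts to a morphism $P \to \XX$ in $D^b(\mod A)$ (using $\Hom(P, \XX) = H^0(\XX)$ under the projection $\Hom(P,\XX)\to H^0(\XX)$, which here is an isomorphism) that induces a surjection on $H^0$. Complete to a triangle $\YY' \to P \to \XX \to \YY'[1]$. The key claim is that $\YY' \in \emod{m}{A}$: taking the long exact cohomology sequence, $H^i(\YY') \cong H^i(\XX)$ for $i \leq -1$ and for $i \geq 2$, while in degrees $0,1$ one gets $0 \to H^{-1}(\XX) \to H^0(\YY') \to P \to H^0(\XX) \to H^1(\YY') \to 0$ with $P \to H^0(\XX)$ surjective, hence $H^1(\YY') = 0$ and $H^0(\YY')$ is the kernel, a module; and $H^i(\XX) = 0$ for $i \notin[-(m-1),0]$ gives $H^i(\YY') = 0$ for $i \notin [-(m-1),0]$ as well. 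Thus $\YY' \in \emod{m}{A}$, so $P \to \XX$ is a deflation with projective source, proving $\emod{m}{A}$ has enough projectives.

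The dual argument, starting from an embedding $H^{-(m-1)}(\XX) \hookrightarrow I$ with $I$ injective, lifted to a morphism $\XX \to I[m-1]$ inducing a monomorphism on $H^{-(m-1)}$, and completed to a triangle $\XX \to I[m-1] \to \ZZ \to \XX[1]$, shows (by the same cohomology bookkeeping, now checking $H^{-m}(\ZZ) = 0$ and that $H^{-(m-1)}(\ZZ)$ is a module) that $\emod{m}{A}$ has enough injectives $(\inj A)[m-1]$. I expect the main obstacle to be purely bookkeeping: verifying carefully via the long exact cohomology sequence that the cocone $\YY'$ (resp.\ cone $\ZZ$) has cohomology concentrated in degrees $[-(m-1),0]$ and that the boundary degrees behave correctly — in particular that choosing the map on $H^0$ (resp.\ $H^{-(m-1)}$) to be surjective (resp.\ injective) is exactly what kills the potentially-problematic cohomology in degree $1$ (resp.\ $-m$). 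No deeper input is needed; this is the extriangulated analogue of the elementary fact that $D^{\leq 0}$ has enough projectives and $D^{\geq 0}$ enough injectives.
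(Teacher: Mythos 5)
Your proposal is correct, but it takes a different route from the paper. The paper proves this lemma in one line, as the special case $\PP=A$ of \Cref{prop:tpp1}: there $\T(A)=\emod{m}{A}$, $H^{[-(m-1),0]}(A)=A$ and $t(\nu A[m-1])=\nu A[m-1]$, so parts (a)--(h) of that proposition immediately yield enough projectives $\add A$ and enough injectives $\add \nu A[m-1]$, together with the converse identification of \emph{all} projective and injective objects. Your argument is instead a direct, self-contained construction; it is essentially what the paper's general approximation argument specializes to when $\PP=A$ (a right $\add A$-approximation of $\XX$ is exactly a map $P\to\XX$ inducing a surjection onto $H^0(\XX)$, and dually for $\nu A[m-1]$), but it avoids the silting machinery and the octahedral-axiom bookkeeping entirely. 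What the paper's route buys is that the same proposition simultaneously handles arbitrary $(m+1)$-term silting complexes and also shows that the projectives of $\emod{m}{A}$ are \emph{exactly} $\proj A$ (your construction gives this too, by the standard splitting argument applied to the deflation $P\to\XX$ for $\XX$ projective, though you do not say so). Two small slips in your cohomology bookkeeping, neither of which affects the conclusion: from the triangle $\YY'\to P\to\XX\to\YY'[1]$ one gets $H^i(\YY')\cong H^{i-1}(\XX)$ for $i\leq -1$ (not $H^i(\XX)$), which still vanishes for $i\leq -m$ since $i-1\leq -(m+1)$; and the correct reason that $\Hom(\XX,I[m])=0$ is that $\Hom(\XX,I[m])\cong\Hom_A(H^{-m}(\XX),I)$ with $H^{-m}(\XX)=0$.
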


\begin{proof}
    For the $(m+1)$-term silting complex $A$, $\T(A)=\emod{m}{A}$, $H^{[-(m-1),0]}(A)=A$ and $\mathfrak{t}(\nu A[m-1])=\nu A[m-1]$. Hence, this assertion follows directly from \Cref{prop:tpp1}.
\end{proof}

\begin{notation}
    We denote by $\uemod{m}{A}$ (resp. $\oemod{m}{A}$) the quotient category of $\emod{m}{A}$ by the ideal consisting of morphisms factoring through projective (resp. injective) objects in $\emod{m}{A}$.
\end{notation}

We recall from \cite[Definition~1.3]{J} the notion of Auslander-Reiten triangle in the extension-closed subcategory $\emod{m}{A}$ of $D^b(\mod A)$ as follows. 

\begin{definition}
    A triangle in $D^b(\mod A)$
    $$\XX\to\YY\to\ZZ\xrightarrow{\delta}\XX[1]$$
    with $\XX$, $\YY$, and $\ZZ$ in $\emod{m}{A}$ is called an Auslander-Reiten triangle in $\emod{m}{A}$ starting at $\XX$ and ending at $\ZZ$, if it satisfies the following conditions.
    \begin{enumerate}
    \item[(i)] The morphism $\delta$ is non-zero.
    \item[(ii)] Any non-section morphism $\alpha:\XX\to\XX'$ in $\emod{m}{A}$ factors through the morphism $\XX\to\YY$ in the triangle.
    \item[(iii)] Any non-retraction morphism $\beta:\ZZ'\to\ZZ$ in $\emod{m}{A}$ factors through the morphism $\YY\to\ZZ$ in the triangle.
    \end{enumerate}
\end{definition}

It is pointed out in \cite{INP} that the notion of Auslander-Reiten triangles in an extension-closed subcategory of a triangulated category coincides with that of almost split extensions in extriangulated categories introduced in \cite{INP}, as well as that of Auslander-Reiten $\mathbb{E}$-triangles in extriangulated categories introduced in \cite{ZZ1}. 

\begin{remark}
    Condition (ii) is equivalent to that any non-section morphism $\alpha:\XX\to\XX'$ in $\emod{m}{A}$ satisfies $\alpha\circ\delta[-1]=0$. Condition (iii) is equivalent to that any non-retraction morphism $\beta:\ZZ'\to\ZZ$ in $\emod{m}{A}$ satisfies $\delta\circ\beta=0$.
\end{remark}

For any Auslander-Reiten triangle in $\emod{m}{A}$ as above, both $\XX$ and $\ZZ$ are indecomposable, see \cite[Lemma~2.4]{J}.

\begin{definition}
    We say that $\emod{m}{A}$ has Auslander-Reiten triangles if 
    \begin{enumerate}
        \item[(1)] for any non-projective indecomposable object $\ZZ$ in $\emod{m}{A}$, there is an Auslander-Reiten triangle ending at $\ZZ$, and
        \item[(2)] for any non-injective indecomposable object $\XX$ in $\emod{m}{A}$, there is an Auslander-Reiten triangle starting at $\XX$.
    \end{enumerate}
\end{definition}

For any $\ZZ\in\emod{m}{A}$, we denote by $\pp\ZZ$ (resp. $\ii\ZZ$) its minimal projective (resp. injective) resolution. Then $\pp\ZZ$ (resp. $\ii\ZZ$) is a complex of projective (resp. injective) $A$-modules concentrated in degrees $\leq 0$ (resp. $\geq -(m-1)$).

We refer to \Cref{sec: trun} for the two ways to truncate complexes: stupid truncation $\sigma_{\geq p}$ and $\sigma_{\leq p}$, and canonical truncation $\sigma^{\geq p}$ and $\sigma^{\leq p}$.

\begin{definition}\label{def:mpp}
    The stupid truncation $\ppm(\ZZ):=\sigma_{\geq -m}(\pp\ZZ)$ (resp. $\iim(\ZZ):=\sigma_{\leq 1}(\ii\ZZ)$) is called a minimal projective (resp. injective) presentation of an object $\ZZ$ in $\emod{m}{A}$. 
\end{definition}

By definition, $\ppm(\ZZ)$ is a complex of projective $A$-modules concentrated in degrees $[-m,0]$ with $H^{[-(m-1),0]}(\ppm(\ZZ))\cong\ZZ$, and $\iim(\ZZ)$ is a complex of injective $A$-modules concentrated in degrees $[-(m-1),1]$ with $H^{[-(m-1),0]}(\iim(\ZZ))\cong\ZZ$. In the usual case (i.e., $m=1$), these notions are the usual ones for $A$-modules in $\mod A$.

\begin{definition}\label{def:AR}
    For any $\ZZ\in\emod{m}{A}$, we define
    $$\AR{m}(\ZZ)=\sigma^{\leq 0}(\nu\ppm(\ZZ)[-1])\text{ and }\AR{m}^-(\ZZ)=\sigma^{\geq -(m-1)}(\nu^-\iim(\ZZ)[1]).$$
\end{definition}

In the usual case (i.e., $m=1$), we have $\AR{1}=\tau$ and $\AR{1}^-=\tau^-$, the usual Auslander-Reiten translations in the module category $\mod A$.

\begin{remark}\label{rmk:AR}
    Since $\ppm(\ZZ)$ is concentrated in degrees $[-m,0]$, after applying $\nu[-1]$, the complex $\nu\ppm(\ZZ)[-1]$ is concentrated in degrees $[-(m-1),1]$. Hence, we have
    $$\AR{m}(\ZZ)=H^{[-(m-1),0]}(\nu\ppm(\ZZ)[-1]),$$
    and by canonical truncation of $\nu\ppm(\ZZ)[-1]$, there is a triangle
    \begin{equation}\label{eq:ARdef}
        \AR{m}(\ZZ)\to\nu\ppm(\ZZ)[-1]\to H^0(\nu\ppm(\ZZ))[-1]\to \AR{m}(\ZZ)[1].
    \end{equation}
    Similarly, the complex $\nu^-\iim(\ZZ)[1]$ is concentrated in degrees $[-m,0]$. Hence, we have
    $$\AR{m}^-(\ZZ)=H^{[-(m-1),0]}(\nu^-\iim(\ZZ)[1]),$$
    and there is a triangle
    $$H^{-(m-1)}(\nu^-\iim(\ZZ))[m]\to \nu^-\iim(\ZZ)[1]\to \AR{m}^-(\ZZ)\to H^{-(m-1)}(\nu^-\iim(\ZZ))[m+1].$$
\end{remark}

\begin{remark}\label{rmk:Iyama}
    The notion of $m$-Auslander-Reiten translations $\tau_m$ and $\tau_m^-$ for $\mod A$ were introduced by Iyama \cite{I} (cf. also \cite[Section~3.2]{I2}), where for any $\ZZ\in\mod A$, $\tau_m(\ZZ)=H^{-(m-1)}(\nu\ppm(\ZZ)[-1])$ and $\tau^{-}_m(\ZZ)=H^{0}(\nu^{-}\iim(\ZZ[m-1])[-1])$. It is clear that $\tau_{[m]}$ and $\tau_m$ (resp. $\tau^{-}_{[m]}$ and $\tau^{-}_m$) have different domains. Further, if we regard an object $\ZZ\in\mod A$ as an object in $\emod{m}{A}$ by the canonical inclusion $\mod A\subseteq\emod{m}{A}$, since the complex $\ppm(\ZZ)$ is exact everywhere unless the $0$-th and $(-m)$-th positions, we have $H^i(\nu\ppm(\ZZ)[-1])=0$ for any $-(m-2)\leq i\leq 0$. Hence, by definition, $\AR{m}(\ZZ)=H^{[-(m-1),0]}(\nu\ppm(\ZZ)[-1])=H^{-(m-1)}(\nu\ppm(\ZZ)[-1])[m-1]$. Thus, we have
    $$\tau_m(\ZZ)=(\AR{m}(\ZZ))[-(m-1)].$$
    Similarly, we have
    $$\tau_m^-(\ZZ)=\AR{m}^-(\ZZ[m-1]).$$
\end{remark}

Similar to the usual case (i.e., $m=1$), the correspondence $\ZZ\mapsto\AR{m}(\ZZ)$ does not define an endo-functor of the extended module category $\emod{m}{A}$, but rather a functor between the quotient categories $\uemod{m}{A}$ and $\oemod{m}{A}$ of $\emod{m}{A}$.

\begin{proposition}\label{prop:ARfun}
    The correspondence $\ZZ\mapsto\AR{m}(\ZZ)$ induces an equivalence of additive categories
    $$\AR{m}:\uemod{m}{A}\to\oemod{m}{A},$$
    with quasi-inverse $\AR{m}^-$ induced by the correspondence $\ZZ\mapsto\AR{m}^-(\ZZ)$.
\end{proposition}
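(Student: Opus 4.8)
The correspondence $\ZZ\mapsto\AR{m}(\ZZ)$ is defined on objects via the minimal projective presentation $\ppm(\ZZ)$, which itself is only well-defined up to isomorphism. So the first task is to promote it to a functor on the stable category $\uemod{m}{A}$. I would do this in two stages. First, observe that a morphism $f\colon\ZZ\to\ZZ'$ in $\emod{m}{A}$ lifts to a chain map $\ppm(f)\colon\ppm(\ZZ)\to\ppm(\ZZ')$ in $K^b(\proj A)$ (using that $\ppm(\ZZ)$ is a complex of projectives concentrated in $[-m,0]$ with $H^{[-(m-1),0]}(\ppm(\ZZ))\cong\ZZ$, together with the isomorphism~\eqref{eq:iso} of \Cref{lem:bz1-2.7} which identifies $\Hom(\ppm(\ZZ),\ZZ'[j])$ with $\Hom(\ZZ,\ZZ'[j])$ for $j\le0$ — this is exactly what makes the lift exist and unique up to homotopy). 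Applying $\nu[-1]$ and then the truncation $\sigma^{\le0}$ (equivalently $H^{[-(m-1),0]}(-)$, cf. the Remark after \Cref{def:AR}) yields a morphism $\AR{m}(f)\colon\AR{m}(\ZZ)\to\AR{m}(\ZZ')$. One then checks that this is well-defined modulo maps factoring through injectives: a null-homotopic chain map $\ppm(f)$ becomes, after $\nu[-1]$ and truncation, a map factoring through $H^0(\nu\ppm(\ZZ))[-1]$, which (being an injective $A$-module placed in degree $1$, i.e. in $(\inj A)[-1]\subseteq(\inj A)[m-1]$ when $m\geq 1$ — more precisely through the injective term appearing in the triangle~\eqref{eq:ARdef}) is injective in $\emod{m}{A}$ by \Cref{lem:enough}. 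This gives a well-defined additive functor $\AR{m}\colon\uemod{m}{A}\to\oemod{m}{A}$, and dually $\AR{m}^-\colon\oemod{m}{A}\to\uemod{m}{A}$ from $\iim(\ZZ)$.

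Next I would establish the quasi-inverse relation. The clean way is to exhibit natural isomorphisms $\AR{m}^-\AR{m}\cong\id$ and $\AR{m}\AR{m}^-\cong\id$ in the stable/costable categories. By symmetry it suffices to handle one composite; I would chase through the construction using the triangle~\eqref{eq:ARdef}. Starting from $\ZZ$, form $\ppm(\ZZ)$, apply $\nu[-1]$ to get a complex $\nu\ppm(\ZZ)[-1]$ concentrated in $[-(m-1),1]$; its truncation $\sigma^{\le0}$ is $\AR{m}(\ZZ)$. Now $\sigma_{\leq 1}(\ii\AR{m}(\ZZ))=\iim(\AR{m}(\ZZ))$ should be identifiable — up to terms that vanish in the costable category — with $\nu\ppm(\ZZ)[-1]$ itself: the point is that $\nu$ sends minimal projective presentations to minimal injective copresentations (the Nakayama functor is an equivalence $\proj A\xrightarrow{\sim}\inj A$ preserving indecomposables, and minimality is a combinatorial condition on the differentials that is preserved). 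Then applying $\nu^-[1]$ returns $\ppm(\ZZ)$ and truncating by $\sigma^{\ge-(m-1)}$ returns $H^{[-(m-1),0]}(\ppm(\ZZ))\cong\ZZ$. The discrepancies introduced by the two truncations are precisely a projective $A$-module in degree $-m$ (at the start) and an injective in degree $1$ (after applying $\nu$), which are zero objects in $\uemod{m}{A}$ and $\oemod{m}{A}$ respectively. Tracking naturality of all these identifications, using that lifts of morphisms to (co)presentations are unique up to homotopy, gives the natural isomorphism $\AR{m}^-\circ\AR{m}\cong\id_{\uemod{m}{A}}$, and dually for the other composite. An equivalence of additive categories follows.

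Alternatively, and perhaps more transparently, one can phrase the whole thing via the Nakayama equivalence $\nu\colon K^b(\proj A)\xrightarrow{\sim}K^b(\inj A)$ of~\eqref{Naks} and the duality~\eqref{eq:Nakayama}: the assignment $\ZZ\mapsto\ppm(\ZZ)$ identifies $\uemod{m}{A}$ with a quotient of a subcategory of $K^b(\proj A)$ (minimal $(m+1)$-term presentations up to homotopy), $\nu$ carries this isomorphically to the analogous subquotient of $K^b(\inj A)$, and the identifications $\ZZ\mapsto\iim(\ZZ)$ realize $\oemod{m}{A}$ as the target; the shift by $[-1]$ and the truncations are then visibly mutually inverse once one passes to the quotients. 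I would likely present the proof this way, with~\eqref{eq:ARdef} and \Cref{lem:bz1-2.7} as the technical inputs.

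\textbf{Main obstacle.} The real work is not the object-level construction but verifying that $\AR{m}$ is \emph{well-defined on morphisms} and \emph{functorial} — i.e. that the ambiguity in lifting $f\colon\ZZ\to\ZZ'$ to $\ppm(f)$, which lives in homotopies, dies after applying $\nu[-1]$, truncating, and passing to $\oemod{m}{A}$ — and then that the two natural isomorphisms witnessing the equivalence are genuinely natural. Both reduce to the assertion that the "error terms" produced by stupid truncation of the (co)presentations (a projective in degree $-m$, an injective in degree $1$) are (co)projective-injective in the relevant sense, which is \Cref{lem:enough}, together with the uniqueness-up-to-homotopy of lifts, which is \Cref{lem:bz1-2.7}. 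Once these bookkeeping points are pinned down, everything else is formal.
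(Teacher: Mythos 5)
Your preferred (``alternative'') formulation is essentially the paper's proof: the paper shows that $H^{[-(m-1),0]}$ induces an equivalence $K^{[-m,0]}(\proj A)/\add(A\oplus A[m])\xrightarrow{\simeq}\uemod{m}{A}$ (dense because $H^{[-(m-1),0]}(\ppm(\ZZ))\cong\ZZ$, full because $\Hom(\PP_1,H^{-m}(\PP_2)[m+1])=0$, with kernel exactly the maps factoring through $\add A[m]$), establishes the dual equivalence on the injective side, and then conjugates $\nu\circ[-1]$ by these two equivalences. So the route is the same; the one piece of content you leave as ``visibly'' mutual inverses is precisely the identification of the kernel ideal of $H^{[-(m-1),0]}$ with the maps factoring through $\add A[m]$, which is the computation the paper actually carries out.

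There is, however, a concrete error in your first formulation. The ambiguity in lifting $f\colon\ZZ\to\ZZ'$ to $\ppm(f)$ does not live in null-homotopies (a null-homotopic chain map is already zero in $K^b(\proj A)$); it consists of maps $g\colon\ppm(\ZZ)\to\ppm(\ZZ')$ with $H^{[-(m-1),0]}(g)=0$, and these factor through $\sigma_{\leq -m}(\pp\ZZ)=P[m]$ for some $P\in\proj A$. After applying $\nu[-1]$ they factor through $\nu P[m-1]$, which is the injective object of $\emod{m}{A}$ that \Cref{lem:enough} supplies. Your claim that the error maps factor through $H^0(\nu\ppm(\ZZ))[-1]$, together with the asserted inclusion $(\inj A)[-1]\subseteq(\inj A)[m-1]$, is wrong: $H^0(\nu\ppm(\ZZ))[-1]$ lies in $\D^{[1,1]}$, is not an object of $\emod{m}{A}$ at all, and \Cref{lem:enough} does not apply to it. Its only role is that $\Hom(\D^{\leq 0},\D^{\geq 1})=0$ makes the canonical truncation $\sigma^{\leq 0}$ functorial on these maps. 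Since your ``main obstacle'' paragraph correctly locates one error term at a projective in degree $-m$, this is a fixable conflation of the stupid-truncation error ($P[m]$, handled by injectivity of $\nu P[m-1]$) with the canonical-truncation cone ($H^0(\nu\ppm(\ZZ))[-1]$, handled by degree reasons), rather than a structural gap --- but as written the well-definedness argument in the first version does not go through.
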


\begin{proof}
We consider the subcategory $K^{[-m,0]}(\proj A)$ of $K^b(\proj A)$ consisting of complexes of projective modules concentrated in degrees $[-m,0]$. The cohomology $H^{[-(m-1),0]}$ gives rise to a functor 
$$F\colon K^{[-m,0]}(\proj A)\to\emod{m}{A}.$$
Since $H^{[-(m-1),0]}(\ppm(\ZZ))\cong\ZZ$ for any $\ZZ\in\emod{m}{A}$, this functor is dense. 

For any $\PP_1,\PP_2\in K^{[-m,0]}(\proj A)$, since $\PP_1$ is a complex of projective modules concentrated in degrees $[-m,0]$, we have $\Hom(\PP_1,H^{-m}(\PP_2)[m+1])=0$. Hence, for any morphism $f:H^{[-(m-1),0]}(\PP_1)\to H^{[-(m-1),0]}(\PP_2)$ in $\emod{m}{A}$, there is a morphism $f':\PP_1\to\PP_2$ such that the following diagram of triangles commutes
\[\xymatrix{
H^{-m}(\PP_1)[m]\ar[r]&\PP_1\ar[r]\ar[d]^{f'}& H^{[-(m-1),0]}(\PP_1)\ar[d]^{f}\ar[r]&H^{-m}(\PP_1)[m+1]\\
H^{-m}(\PP_2)[m]\ar[r]&\PP_2\ar[r]& H^{[-(m-1),0]}(\PP_2)\ar[r]&H^{-m}(\PP_2)[m+1]
}\]
where the rows are given by canonical truncation of $\PP_1$ and $\PP_2$, respectively. So $f=H^{[-(m-1),0]}(f')$, which implies that the functor $F$ is full. 

Let $g:\PP_1\to \PP_2$ be a morphism in $K^{[-m,0]}(\proj A)$ satisfying $H^{[-(m-1),0]}(g)=0$ (e.g., take $g=f'$ in the above diagram, then $f=H^{[-(m-1),0]}(g)=0$). So $g$ factors through $H^{-m}(\PP_2)[m]$. By stupid truncation of $\PP_1$, there is a triangle
\[\sigma_{\geq -(m-1)}(\PP_1)\to \PP_1\to \sigma_{\leq -m}(\PP_1)\to \sigma_{\geq -(m-1)}(\PP_1)[1].\]
Applying $\Hom(-,H^{-m}(\PP_2)[m])$ to this triangle, since $\sigma_{\geq -(m-1)}(\PP_1)$ is a complex of projective modules concentrated in degrees $\geq -(m-1)$, we obtain an exact sequence
\[\Hom(\sigma_{\leq -m}(\PP_1),H^{-m}(\PP_2)[m])\to\Hom(\PP_1,H^{-m}(\PP_2)[m])\to 0.\]
Then $g$ factors through $\sigma_{\leq -m}(\PP_1)$. Since $\PP_1\in K^{[-m,0]}(\proj A)$, $\sigma_{\leq -m}(\PP_1)=P[m]$ for some $P\in\proj A$. Conversely, if a morphism $g:\PP_1\to\PP_2$ in $K^{[-m,0]}$ factoring through $P[m]$ for some $P\in\proj A$, since $H^{[-(m-1),0]}(P[m])=0$, one has $H^{[-(m-1),0]}(g)=0$. Therefore, the ideal of $K^{[-m,0]}(\proj A)$ consisting of morphisms factoring through $\add A[m]$ is the kernel of the functor $F$. Hence, $F$ induces an equivalence
$$\overline{F}\colon K^{[-m,0]}(\proj A)/\add A[m]\xrightarrow{\simeq}\emod{m}{A},$$
with the quasi-inverse giving by taking a minimal projective presentation. This equivalence restricts to an equivalence from $\proj A$ to $\proj A$. So $\overline{F}$ induces an equivalence
$$\overline{\overline{F}}\colon K^{[-m,0]}(\proj A)/\add (A\oplus A[m])\xrightarrow{\simeq}\uemod{m}{A}.$$
Dually, there is an equivalence also induced by the cohomology $H^{[-(m-1),0]}$:
$$\overline{\overline{E}}\colon K^{[-(m-1),1]}(\inj A)/\add (\nu A[-1]\oplus \nu A[m-1])\xrightarrow{\simeq}\oemod{m}{A}$$
with the quasi-inverse given by taking a minimal injective presentation. Therefore, the composition of equivalences
$$\overline{\overline{E}}\circ[-1]\circ\nu\circ\overline{\overline{F}}^{-}:\uemod{m}{A}\xrightarrow{\simeq}\oemod{m}{A}$$
sends $\ZZ$ to $\AR{m}(\ZZ)$, with the quasi-inverse $\overline{\overline{F}}\circ\nu^{-}\circ[1]\circ\overline{\overline{E}}^{-}$ sending $\ZZ$ to $\AR{m}^-(\ZZ)$.
\end{proof}

We proceed to summarise some properties of $\AR{m}$ and $\AR{m}^-$.

\begin{proposition}\label{ARprop}
    Let $\ZZ$ be an indecomposable object in $\emod{m}{A}$.
    \begin{enumerate}
        \item[(i)] $\AR{m}(\ZZ)$ has no nonzero injective objects in $\emod{m}{A}$ as direct summands.
        \item[(ii)] $\AR{m}^-(\ZZ)$ has no nonzero projective objects in $\emod{m}{A}$ as direct summands.
        \item[(iii)] $\ZZ$ is projective in $\emod{m}{A}$ if and only if $\AR{m}(\ZZ)=0$.
        \item[(iv)] $\ZZ$ is injective in $\emod{m}{A}$ if and only if $\AR{m}^-(\ZZ)=0$.
        \item[(v)] If $\ZZ$ is not projective in $\emod{m}{A}$, then $\AR{m}(\ZZ)$ is indecomposable and $$\AR{m}^{-}\AR{m}(\ZZ)\cong\ZZ.$$
        \item[(vi)] If $\ZZ$ is not injective in $\emod{m}{A}$, then $\AR{m}^{-}(\ZZ)$ is indecomposable and $$\AR{m}\AR{m}^{-}(\ZZ)\cong\ZZ.$$
    \end{enumerate}
\end{proposition}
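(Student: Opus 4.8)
The statement has six parts, but the machinery of Proposition~\ref{prop:ARfun} reduces most of them to formal consequences of having an equivalence $\AR{m}\colon\uemod{m}{A}\xrightarrow{\simeq}\oemod{m}{A}$ together with the explicit descriptions of the projective objects $\add A$ in $\emod{m}{A}$ and the injective objects $\add\nu A[m-1]$. The plan is to exploit that the equivalence in the proof of Proposition~\ref{prop:ARfun} is built as $\overline{\overline{E}}\circ[-1]\circ\nu\circ\overline{\overline{F}}^{-}$, where $\overline{\overline{F}}$ identifies $\uemod{m}{A}$ with $K^{[-m,0]}(\proj A)/\add(A\oplus A[m])$ and $\overline{\overline{E}}$ identifies $\oemod{m}{A}$ with $K^{[-(m-1),1]}(\inj A)/\add(\nu A[-1]\oplus\nu A[m-1])$.

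For (i): write $\AR{m}(\ZZ)=H^{[-(m-1),0]}(\nu\ppm(\ZZ)[-1])$ and note that $\nu\ppm(\ZZ)[-1]$ is a minimal complex of injectives concentrated in degrees $[-(m-1),1]$. If an injective object $\nu I[m-1]\in\emod{m}{A}$ (with $I\in\inj A$, hence $\nu I$ already injective) were a direct summand of $\AR{m}(\ZZ)$, then by minimality it would have to come from a direct summand $\nu I'[-1]$ of $\nu\ppm(\ZZ)[-1]$ contributing nothing after truncation, or from a summand of the complex in degree $-(m-1)$; tracing this through $\ppm(\ZZ)=\sigma_{\geq -m}(\pp\ZZ)$ forces a split summand of the minimal projective resolution, contradicting minimality unless the summand is zero. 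Part (ii) is dual. For (iii): $\AR{m}(\ZZ)=0$ in $\emod{m}{A}$ exactly when $\nu\ppm(\ZZ)[-1]$ has zero cohomology in degrees $[-(m-1),0]$, i.e.\ is concentrated in degree $1$, i.e.\ $\ppm(\ZZ)$ is concentrated in degree $-m$, i.e.\ $\ppm(\ZZ)\in\add A[m]$; by the equivalence $\overline{\overline{F}}$ this says $\ZZ$ is isomorphic in $\uemod{m}{A}$ to $0$, i.e.\ $\ZZ\in\add A$, i.e.\ $\ZZ$ is projective in $\emod{m}{A}$ by Lemma~\ref{lem:enough}. Part (iv) is dual, using $\iim(\ZZ)\in\add\nu A[m-1]$ iff $\AR{m}^-(\ZZ)=0$.

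For (v) and (vi): since $\AR{m}$ and $\AR{m}^-$ are quasi-inverse equivalences of additive categories (Proposition~\ref{prop:ARfun}), for any $\ZZ$ not projective in $\emod{m}{A}$ we have $\AR{m}^-\AR{m}(\ZZ)\cong\ZZ$ in $\uemod{m}{A}$, hence in $\emod{m}{A}$ up to adding a projective summand; but the explicit formulas show $\AR{m}^-\AR{m}(\ZZ)$ has no projective summand by part (ii), and $\ZZ$ being indecomposable non-projective has no projective summand either, so the two are isomorphic in $\emod{m}{A}$ itself. Indecomposability of $\AR{m}(\ZZ)$ follows because an additive equivalence preserves indecomposability in the quotient, and $\AR{m}(\ZZ)$ has no injective (hence in particular no "zero-in-the-quotient") summand by part (i). Part (vi) is the mirror argument.

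The main obstacle I expect is the careful bookkeeping in parts (i)--(ii): one must argue that taking the canonical truncation $\sigma^{\leq 0}$ of the minimal injective complex $\nu\ppm(\ZZ)[-1]$ cannot create a spurious injective direct summand, which really amounts to checking that minimality of $\pp\ZZ$ (equivalently, that $\ppm(\ZZ)$ has no contractible summand except possibly $A[m]$, which is already quotiented out) is preserved under $\nu[-1]$ and the truncation. This is where I would be most careful; everything else is a formal consequence of the equivalence in Proposition~\ref{prop:ARfun} and the identification of projectives/injectives in Lemma~\ref{lem:enough}. Alternatively, parts (v)--(vi) can be proved first and used to streamline (i)--(iv), but I would still need the minimality argument somewhere to pin down that $\AR{m}(\ZZ)=0$ characterizes projectivity on the nose rather than merely up to projective summands.
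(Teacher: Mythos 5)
Your overall architecture matches the paper's: parts (iii)--(vi) are derived from the equivalence of \Cref{prop:ARfun} together with \Cref{lem:enough}, and parts (i)--(ii) come down to the minimality of $\ppm(\ZZ)$. Your treatment of (v)--(vi) is complete and is exactly the paper's argument (indecomposability and the isomorphism hold in the quotient categories, and (i)--(ii) rule out the ambiguity by projective/injective summands).

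The genuine gap is the one you yourself flag in (i): you never actually show that an injective direct summand of $\AR{m}(\ZZ)=H^{[-(m-1),0]}(\nu\ppm(\ZZ)[-1])$ lifts to a direct summand of the complex $\nu\ppm(\ZZ)[-1]$. A direct summand of a canonical truncation does \emph{not} in general lift to a summand of the ambient object, so ``it would have to come from a summand contributing nothing after truncation'' is the conclusion you want, not an argument for it. The paper closes this by rotating the truncation triangle~\eqref{eq:ARdef} to
$H^0(\nu\ppm(\ZZ))[-2]\to\nu P[m-1]\oplus\ZZ_2\to\nu\ppm(\ZZ)[-1]\to H^0(\nu\ppm(\ZZ))[-1]$
and observing that $\Hom(H^0(\nu\ppm(\ZZ))[-2],\nu P[m-1])\cong\Hom(H^0(\nu\ppm(\ZZ)),\nu P[m+1])=0$ for degree reasons; hence the projection onto $\nu P[m-1]$ factors through the middle map, so $\nu P[m-1]$ splits off $\nu\ppm(\ZZ)[-1]$, and applying $\nu^{-}[1]$ makes $P[m]$ a summand of $\ppm(\ZZ)$, contradicting minimality. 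You should supply this (or an equivalent) Hom-vanishing step; without it (i), and hence the upgrades in (v)--(vi), do not stand.

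Two smaller points. In (iii) your degree bookkeeping slips: $\nu\ppm(\ZZ)[-1]$ having cohomology only in degree $1$ corresponds to $\ppm(\ZZ)$ having cohomology only in degree $0$, i.e.\ $\ppm(\ZZ)\in\add A$, not ``concentrated in degree $-m$'' (which would force $\ZZ=0$); the clean route for the converse, which you also invoke, is simply that $\AR{m}(\ZZ)=0$ makes $\ZZ$ zero in $\uemod{m}{A}$ by \Cref{prop:ARfun}, hence projective. Also, the injective objects of $\emod{m}{A}$ are of the form $\nu P[m-1]$ with $P\in\proj A$; writing $\nu I[m-1]$ with $I\in\inj A$ does not typecheck, since $\nu$ is defined on $\proj A$.
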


\begin{proof}
    For (i), suppose, to the contrary, that there exists a nonzero injective object $\ZZ_1$ in $\emod{m}{A}$ such that $\AR{m}(\ZZ)\cong\ZZ_1\oplus\ZZ_2$ for some $\ZZ_2\in\emod{m}{A}$. By \Cref{lem:enough}, $\ZZ_1= \nu P[m-1]$ for some $P\in\proj A$. Rotating the triangle~\eqref{eq:ARdef}, we obtain a triangle
    \[H^0(\nu\ppm(\ZZ))[-2]\to(\nu P[m-1])\oplus\ZZ_2\to\nu\ppm(\ZZ)[-1]\to H^0(\nu\ppm(\ZZ))[-1].\]
    Since $\Hom(H^0(\nu\ppm(\ZZ))[-2],\nu P[m-1])\cong\Hom(H^0(\nu\ppm(\ZZ)),\nu P[m+1])=0$, $\nu P[m-1]$ is also a direct summand of $\nu\ppm(\ZZ)[-1]$. So $P[m]$ is a direct summand of $\ppm(\ZZ)$, which contradicts the minimality of the projective presentation $\ppm(\ZZ)$ of $\ZZ\in\emod{m}{A}$. The assertion (ii) can be proved similarly.

    For (iii), if $\ZZ$ is projective in $\emod{m}{A}$, by \Cref{lem:enough}, $\ZZ$ is isomorphic to some $P\in\proj A$. So $\ppm(\ZZ)=\pp\ZZ\cong P$. Then $$\AR{m}(\ZZ)=\sigma^{\leq 0}(\nu\ppm(\ZZ)[-1])=\sigma^{\leq 0}(\nu P[-1])=0.$$
    Conversely, if $\AR{m}(\ZZ)=0$, then by \Cref{prop:ARfun}, $\ZZ$ is zero in $\uemod{m}{A}$. So $\ZZ$ is projective in $\emod{m}{A}$. The assertion (iv) can be proved similarly.

    For (v), since $\ZZ$ is indecomposable and not projective in $\emod{m}{A}$, by \Cref{prop:ARfun}, $\AR{m}(\ZZ)$ is indecomposable in $\oemod{m}{A}$. By (i), any nonzero direct summand of $\AR{m}(\ZZ)$ is not injective in $\emod{m}{A}$. Hence, $\AR{m}(\ZZ)$ is indecomposable in $\emod{m}{A}$. Also by \Cref{prop:ARfun}, we have $\AR{m}^{-}\AR{m}(\ZZ)\cong\ZZ$ in $\uemod{m}{A}$. By (ii), $\AR{m}^{-}\AR{m}(\ZZ)$ has no nonzero direct summands which are projective in $\emod{m}{A}$. So we have $\AR{m}^{-}\AR{m}(\ZZ)\cong\ZZ$ in $\emod{m}{A}$. The assertion (vi) can be proved similarly.
\end{proof}

Now we prove the main theorem of this section.

\begin{theorem}\label{thm:AR}
    Let $\ZZ$ be an indecomposable object in $\emod{m}{A}$. If $\ZZ$ is not projective in $\emod{m}{A}$, there is an Auslander-Reiten triangle in $\emod{m}{A}$
    \[\AR{m}(\ZZ)\to\YY\to\ZZ\to\AR{m}(\ZZ)[1].\]
    If $\ZZ$ is not injective in $\emod{m}{A}$, there is an Auslander-Reiten triangle in $\emod{m}{A}$
    \[\ZZ\to\WW\to\AR{m}^{-}(\ZZ)\to\ZZ[1].\]
    Consequently, the category $\emod{m}{A}$ has Auslander-Reiten triangles. 
\end{theorem}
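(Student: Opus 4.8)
\emph{Proof proposal.}

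The plan is to produce, for every indecomposable non-projective $\ZZ\in\emod{m}{A}$, an almost split extriangle (Auslander--Reiten triangle) $\AR{m}(\ZZ)\to\YY\to\ZZ\to\AR{m}(\ZZ)[1]$. Once this is done, the second triangle of the theorem is automatic: if $\ZZ$ is indecomposable and non-injective, then by \Cref{ARprop}~(vi) the object $\AR{m}^{-}(\ZZ)$ is indecomposable with $\AR{m}\AR{m}^{-}(\ZZ)\cong\ZZ$, and $\AR{m}^{-}(\ZZ)$ is non-projective (otherwise $\AR{m}\AR{m}^{-}(\ZZ)=0$ by \Cref{ARprop}~(iii), contradicting $\ZZ\ne 0$), so applying the first part to $\AR{m}^{-}(\ZZ)$ yields an Auslander--Reiten triangle $\ZZ\to\WW\to\AR{m}^{-}(\ZZ)\to\ZZ[1]$; and the closing sentence is then literally the definition of having Auslander--Reiten triangles. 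Since $\emod{m}{A}$ is closed under extensions in $D^b(\mod A)$, any class $\delta\in\mathbb{E}(\ZZ,\AR{m}(\ZZ))=\Hom(\ZZ,\AR{m}(\ZZ)[1])$ is realized by a triangle $\AR{m}(\ZZ)\to\YY\to\ZZ\xrightarrow{\delta}\AR{m}(\ZZ)[1]$ with $\YY\in\emod{m}{A}$, so the whole task is to pick $\delta$ making this triangle almost split.

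The core is an Auslander--Reiten formula in $\emod{m}{A}$: for $\ZZ$ indecomposable non-projective and every $\WW\in\emod{m}{A}$ I would establish isomorphisms, natural in $\WW$,
$$\Hom(\WW,\AR{m}(\ZZ)[1])\cong D\,\underline{\Hom}_{\emod{m}{A}}(\ZZ,\WW)\qquad\text{and}\qquad\Hom(\ZZ,\WW[1])\cong D\,\overline{\Hom}_{\emod{m}{A}}\big(\WW,\AR{m}(\ZZ)\big),$$
the second being dual to the first (established the same way with $\AR{m}^{-},\nu^{-},\iim$ in place of $\AR{m},\nu,\ppm$, then re-expressed via \Cref{ARprop}~(v)). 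To prove the first, apply $\Hom(\WW,-)$ to the defining triangle~\eqref{eq:ARdef}: the term $\Hom(\WW,H^0(\nu\ppm(\ZZ))[-1])$ vanishes because $\WW\in\D^{\leq 0}$ while $H^0(\nu\ppm(\ZZ))[-1]\in\D^{\geq 1}$, so $\Hom(\WW,\AR{m}(\ZZ)[1])$ is identified with the kernel of $\Hom(\WW,\nu\ppm(\ZZ))\to\Hom(\WW,H^0(\nu\ppm(\ZZ)))$. Then rewrite $\Hom(\WW,\nu\ppm(\ZZ))\cong D\Hom(\ppm(\ZZ),\WW)$ via the Nakayama duality~\eqref{eq:Nakayama} (valid since $\ppm(\ZZ)\in K^b(\proj A)$), and combine \Cref{lem:bz1-2.7} with the identification $\uemod{m}{A}\simeq K^{[-m,0]}(\proj A)/\add(A\oplus A[m])$ obtained in the proof of \Cref{prop:ARfun} to see that this kernel is exactly $D\,\underline{\Hom}_{\emod{m}{A}}(\ZZ,\WW)$: passing to the kernel strips from $\Hom(\ppm(\ZZ),\WW)$ precisely the morphisms that become zero under $H^{[-(m-1),0]}$, namely those factoring through the projective tail $\add A[m]$.

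Granting the formula, set $\XX=\AR{m}(\ZZ)$, a nonzero indecomposable (and non-injective) object by \Cref{ARprop}~(v). Evaluating the first formula at $\WW=\ZZ$ and the second at $\WW=\XX$ both compute $\mathbb{E}(\ZZ,\XX)$, giving $\mathbb{E}(\ZZ,\XX)\cong D\,\underline{\End}(\ZZ)$ and $\mathbb{E}(\ZZ,\XX)\cong D\,\overline{\End}(\XX)$; this is nonzero, since $\ZZ$ is indecomposable non-projective so $\underline{\End}(\ZZ)$ is a nonzero local ring, and the two descriptions show $\mathbb{E}(\ZZ,\XX)$ has a simple socle as an $\End(\XX)$-$\End(\ZZ)$-bimodule. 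Take $\delta$ to be a nonzero element of that socle and realize it as a triangle $\XX\to\YY\to\ZZ\xrightarrow{\delta}\XX[1]$ in $\emod{m}{A}$ as above. Condition (i) holds since $\delta\ne 0$. For (iii): if $\beta\colon\ZZ'\to\ZZ$ is not a retraction then, $\ZZ$ being indecomposable, $\beta g\in\operatorname{rad}\End(\ZZ)$ for every $g\colon\ZZ\to\ZZ'$; under the first formula at $\WW=\ZZ'$, the element $\delta\circ\beta\in\mathbb{E}(\ZZ',\XX)\cong D\,\underline{\Hom}_{\emod{m}{A}}(\ZZ,\ZZ')$ is the functional $\underline g\mapsto\lambda(\underline{\beta g})$, where $\lambda$ corresponds to $\delta$ and vanishes on $\operatorname{rad}\underline{\End}(\ZZ)$ (this is what it means for $\delta$ to lie in the socle as a right $\End(\ZZ)$-module); hence $\delta\circ\beta=0$, which is (iii) by the Remark following the definition. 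Condition (ii) is obtained symmetrically from the second formula, using that $\XX$ is indecomposable so that $h\alpha\in\operatorname{rad}\End(\XX)$ for any non-section $\alpha\colon\XX\to\XX'$ and any $h\colon\XX'\to\XX$. The main obstacle is the Auslander--Reiten formula of the second paragraph — specifically the naturality-heavy bookkeeping identifying the kernel coming out of~\eqref{eq:ARdef} and~\eqref{eq:Nakayama} with the stable Hom $D\,\underline{\Hom}_{\emod{m}{A}}(\ZZ,\WW)$, i.e.\ checking that the only morphisms killed are those through $\add A[m]$; after that, the rest is the classical socle construction of almost split extensions, carried out in the extriangulated setting.
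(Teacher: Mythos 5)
Your route is genuinely different from the paper's. The paper does not prove the Auslander--Reiten formula first: it imports an Auslander--Reiten triangle $(\nu\pp\ZZ)[-1]\to\YY'\to\ZZ\xrightarrow{\delta'}\nu\pp\ZZ$ in the unbounded homotopy category $K(\inj A)$ from Krause--Le, and then modifies the connecting morphism $\delta'$ through two successive factorizations (through $\nu\ppm(\ZZ)$ using stupid truncation of $\pp\ZZ$, then through $\AR{m}(\ZZ)[1]$ using canonical truncation) to land inside $\emod{m}{A}$, checking only condition (iii) and invoking \cite[Theorem~2.9]{INP} for condition (ii); the formula of \Cref{cor:AR} is then a \emph{consequence} via \cite[Theorem~3.6]{INP}. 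You invert this: you propose to prove \Cref{cor:AR} directly and then run the classical socle construction. That inversion is legitimate in principle --- it is essentially Krause--Le's own strategy, redone inside $\emod{m}{A}$ --- and your reduction of the second triangle to the first via \Cref{ARprop} is fine. The trade-off is that the paper outsources the hard existence step to \cite{KL}, whereas you must actually prove the duality, which the paper never does by hand.

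And that is where the gap sits. Your identification of $\ker\bigl(\Hom(\WW,\nu\ppm(\ZZ))\to\Hom(\WW,H^0(\nu\ppm(\ZZ)))\bigr)$ with $D\Hom_{\uemod{m}{A}}(\ZZ,\WW)$ is asserted with an incorrect justification: you say the kernel ``strips precisely the morphisms that become zero under $H^{[-(m-1),0]}$, namely those factoring through $\add A[m]$.'' But those morphisms are already gone before the kernel enters: the composite $\Hom(\WW,\nu\ppm(\ZZ))\cong D\Hom(\ppm(\ZZ),\WW)\cong D\Hom(\ZZ,\WW)$ (Nakayama duality plus \Cref{lem:bz1-2.7}) has already quotiented $\Hom(\ppm(\ZZ),\WW)$ by the ideal $\add A[m]$. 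What the kernel of the map to $\Hom(\WW,H^0(\nu\ppm(\ZZ)))$ must additionally cut out is the annihilator of the morphisms $\ZZ\to\WW$ factoring through $\proj A=\add A$ --- the actual projectives of $\emod{m}{A}$ by \Cref{lem:enough} --- and establishing that (together with naturality in $\WW$) is the entire content of the formula; minimality of $\ppm(\ZZ)$ must enter here, since otherwise $\AR{m}(\ZZ)$ would acquire injective summands and the formula would fail. You flag this step as the main obstacle, but the sketch you give of how to close it points at the wrong ideal, so as written the argument does not go through. A secondary, repairable point: your ``symmetric'' verification of condition (ii) leans on an unproved claim that $\mathbb{E}(\ZZ,\XX)$ has simple socle as a bimodule; it is cleaner to do as the paper does and deduce (ii) from (i), (iii) and the indecomposability of $\AR{m}(\ZZ)$ via \cite[Theorem~2.9]{INP}.
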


\begin{proof}
    We only show the first assertion, since the second one can be proved similarly. We adopt the method in \cite{J} to $\emod{m}{A}$.

    Let $K(\proj A)$ (resp. $K(\inj A)$) be the unbounded homotopy category of complexes of modules in $\proj A$ (resp. $\inj A$). The Nakayama functor $\nu$ induces an equivalence
    $$\nu:K(\proj A)\to K(\inj A),$$
    whose restriction to the bounded categories is the equivalence~\eqref{Naks}.

    We regard $\ZZ$ as a compact object in $K(\inj A)$ by the isomorphism $\ZZ\cong\ii\ZZ$ in the unbounded derived category $D(\mod A)$ of $\mod A$, see \cite[Lemma 2.1]{KL}. Then by \cite[Proposition 6.2]{KL}, there is an Auslander-Reiten triangle in $K(\inj A)$
    \begin{equation}\label{eq:KL}
        (\nu\pp\ZZ)[-1]\to \YY'\to\ZZ\xrightarrow{\delta'} \nu\pp\ZZ.
    \end{equation}
    By stupid truncation of $\pp\ZZ$ and applying $\nu$, we get a triangle
    \begin{equation}\label{eq:tri-2}
        \nu\ppm(\ZZ)\xrightarrow{f}\nu\pp\ZZ\to \nu\sigma_{\leq -(m+1)}(\pp\ZZ)\to \nu\ppm(\ZZ)[1].
    \end{equation}
    For any $\ZZ'\in \emod{m}{A}$, applying $\Hom(\ZZ',-)$ to this triangle, we get a long exact sequence
    \[\begin{array}{cccc}
         & \Hom(\ZZ',\nu\sigma_{\leq -(m+1)}(\pp\ZZ)[-1])&\to&\Hom(\ZZ',\nu\ppm(\ZZ)) \\
        \xrightarrow{\Hom(\ZZ',f)} & \Hom(\ZZ',\nu\pp\ZZ)&\to& \Hom(\ZZ',\nu\sigma_{\leq -(m+1)}(\pp\ZZ)).
    \end{array}\]
    Since $\nu\sigma_{\leq -(m+1)}(\pp\ZZ)$ is an complex of injective $A$-modules concentrated in degrees $\leq -(m+1)$ and $\ZZ'\in \emod{m}{A}=\D^{[-(m-1),0]}$, we have
    $$\Hom(\ZZ',\nu\sigma_{\leq -(m+1)}(\pp\ZZ)[-1])=0=\Hom(\ZZ',\nu\sigma_{\leq -(m+1)}(\pp\ZZ)).$$
    Hence, $\Hom(\ZZ',f)$ is a functorial isomorphism.
    It follows that the morphism $\delta':\ZZ\to \nu\pp\ZZ$ in the triangle~\eqref{eq:KL} factors through $f$, i.e., there is a morphism $\delta'':\ZZ\to \nu\ppm(\ZZ)$ such that $\delta'=f\circ\delta''$. See the right triangle in the commutative diagram~\eqref{eq:cd}.
    
    We claim that $\delta''$ satisfies the following property.
    \begin{enumerate}
        \item[($\star$)] $\delta''\circ \gamma=0$ for any non-retraction morphism $\gamma:\ZZ'\to\ZZ$ in $D^b(\mod A)$. 
    \end{enumerate}
    Indeed, since $\gamma$ is not a retraction and $\ZZ'\cong\ii\ZZ'$ can be regarded as an object in $K(\inj A)$, by the Auslander-Reiten triangle~\eqref{eq:KL}, we have $\delta'\circ\gamma=0$. Then $\delta''\circ\gamma=0$, since $\Hom(\ZZ',f)$ is a functorial isomorphism.
    
    Extend $\delta'':\ZZ\to \nu\ppm(\ZZ)$ to a triangle
    \[\nu\ppm(\ZZ)[-1]\to\YY''\to\ZZ\xrightarrow{\delta''}\nu\ppm(\ZZ).\]
    Since $\ZZ$ is not projective in $\emod{m}{A}$, by definition, there is a non-zero morphism $\alpha:\ZZ\to\XX'''[1]$ in $D^b(\mod A)$ for some $\XX'''\in\emod{m}{A}$. Extending it to a triangle
    \[\XX'''\to\YY'''\xrightarrow{g}\ZZ\xrightarrow{\alpha}\XX'''[1]\]
    in $D^b(\mod A)$, we have that $g$ is not a retraction. So by ($\star$), we have $\delta''\circ g=0$. Hence, $\delta''$ factors through $\alpha$, i.e., there is a morphism $h:\XX'''[1]\to \nu\ppm(\ZZ)$ such that $h\circ \alpha=\delta''$. See the middle triangle in the commutative diagram~\eqref{eq:cd}.
    
    Shifting the triangle~\eqref{eq:ARdef}, we get the following triangle
    \begin{equation}\label{eq:01}
        \AR{m}(\ZZ)[1]\xrightarrow{\beta}\nu\ppm(\ZZ)\to H^0(\nu\ppm(\ZZ))\to \AR{m}(\ZZ)[2].
    \end{equation}
    Since $\Hom(\XX'''[1],H^0(\nu\ppm(\ZZ)))=0$ due to $\XX'''[1]\in\D^{[-m,-1]}$ and $H^0(\nu\ppm(\ZZ))\in\D^{[0,0]}$, the morphism $h$ factors through $\beta$, i.e., there exists a morphism 
    $\gamma:\XX'''[1]\to \AR{m}(\ZZ)[1]$ such that $\beta\circ\gamma=h$. See the left triangle in the commutative diagram~\eqref{eq:cd}.
    
    \begin{equation}\label{eq:cd}
        \xymatrix{
        &\XX'''[1]\ar@{-->}[dl]_{\gamma}\ar@{-->}[rd]_-{h}&\ZZ\ar[l]_-{\alpha}\ar[r]^-{\delta'}\ar@{-->}[d]_-{\delta''}&\nu\pp\ZZ\\
    \AR{m}(\ZZ)[1]\ar[rr]_{\beta}&&\nu\ppm(\ZZ)\ar[ru]_{f}
    }
    \end{equation}
    
    Take $\delta=\gamma\circ\alpha$ and extend it to a triangle
    \[\AR{m}(\ZZ)\to\YY\to \ZZ\xrightarrow{\delta} \AR{m}(\ZZ)[1].\]
    We claim that this is an Auslander-Reiten triangle in $\emod{m}{A}$. Indeed, since by \Cref{ARprop}~(v), $\AR{m}(\ZZ)$ is indecomposable, by \cite[Theorem~2.9]{INP}, we only need to show that for any non-retraction morphism $z:\ZZ'\to\ZZ$ in $\emod{m}{A}$, one has $\delta\circ z=0$. By ($\star$), we have $\delta''\circ z=0$. Since $\beta\circ\delta=\beta\circ\gamma\circ \alpha=h\circ\alpha=\delta''$, we have $\beta\circ(\delta\circ z)=\delta''\circ z=0$. So by triangle~\eqref{eq:01}, the morphism $\delta\circ z:\ZZ'\to \AR{m}(\ZZ)[1]$ factors through $H^0(\nu\ppm(\ZZ))[-1]$. However, since $\ZZ'\in \emod{m}{A}=\D^{[-(m-1),0]}$ and $H^0(\nu\ppm(\ZZ))[-1]\in \D^{[1,1]}$, we have $\Hom(\ZZ',H^0(\nu\ppm(\ZZ))[-1])=0$. Therefore, $\delta\circ z=0$ as required.
\end{proof}

By \Cref{thm:AR} and \cite[Theorem~3.6]{INP}, we have the following Auslander-Reiten formula for $\emod{m}{A}$. 

\begin{corollary}\label{cor:AR}
    For any objects $\XX$ and $\YY$ in $\emod{m}{A}$, there are functorial isomorphisms
    $$\Hom_{\uemod{m}{A}}(\XX,\YY)\cong D\mathbb{E}(\YY,\AR{m}(\XX)),$$
    and
    $$\Hom_{\oemod{m}{A}}(\XX,\YY)\cong D\mathbb{E}(\AR{m}^-(\YY),\XX).$$
\end{corollary}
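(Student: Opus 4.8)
The plan is to obtain both isomorphisms as instances of the Auslander--Reiten duality for extriangulated categories, \cite[Theorem~3.6]{INP}, once one checks that $\emod{m}{A}$ satisfies its hypotheses and identifies the Auslander--Reiten translation occurring there with $\AR{m}$. So the proof splits into three short steps: verifying the hypotheses, identifying the translation, and extending from indecomposables to arbitrary objects.

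First I would recall the structural facts already established: $\emod{m}{A}$ is a $\k$-linear, Hom-finite, Krull--Schmidt extriangulated category with $\mathbb{E}(\XX,\YY)=\Hom(\XX,\YY[1])$, it has enough projective objects $\proj A$ and enough injective objects $(\inj A)[m-1]$ by \Cref{lem:enough}, and by \Cref{thm:AR} it has Auslander--Reiten triangles. Under these hypotheses \cite[Theorem~3.6]{INP} yields, for an indecomposable non-projective $\XX$ and an indecomposable non-injective $\YY$, functorial isomorphisms $\Hom_{\uemod{m}{A}}(\XX,-)\cong D\mathbb{E}(-,\tau\XX)$ and $\Hom_{\oemod{m}{A}}(-,\YY)\cong D\mathbb{E}(\tau^-\YY,-)$, where $\tau\XX$ denotes the starting term of the Auslander--Reiten triangle ending at $\XX$ and $\tau^-\YY$ the ending term of the Auslander--Reiten triangle starting at $\YY$.

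Next I would identify these translations with $\AR{m}$ and $\AR{m}^-$: by \Cref{thm:AR} the Auslander--Reiten triangle ending at an indecomposable non-projective $\XX$ is $\AR{m}(\XX)\to\YY'\to\XX\to\AR{m}(\XX)[1]$, so $\tau\XX\cong\AR{m}(\XX)$ in $\oemod{m}{A}$, and dually $\tau^-\YY\cong\AR{m}^-(\YY)$ in $\uemod{m}{A}$; by \Cref{prop:ARfun} this is moreover compatible with the functorial structure. Substituting, the two displayed isomorphisms of the corollary hold whenever $\XX$ is indecomposable non-projective and $\YY$ is indecomposable non-injective. Finally I would pass to arbitrary objects by additivity in each variable, noting that the degenerate cases are consistent: if $\XX$ is projective then $\Hom_{\uemod{m}{A}}(\XX,\YY)=0$ while $\AR{m}(\XX)=0$ by \Cref{ARprop}~(iii), so $\mathbb{E}(\YY,\AR{m}(\XX))=0$ as well, and dually for $\YY$ injective using \Cref{ARprop}~(iv). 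I do not anticipate a serious obstacle, since the result is essentially a transcription of \cite[Theorem~3.6]{INP} into the present setting; the one point requiring a little care is that the functoriality of the isomorphisms (via \Cref{prop:ARfun}) be preserved when extending additively across projective and injective summands.
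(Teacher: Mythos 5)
Your proposal is correct and follows essentially the same route as the paper, which derives the corollary directly from \Cref{thm:AR} together with \cite[Theorem~3.6]{INP}. The extra care you take in identifying the translation with $\AR{m}$ via \Cref{thm:AR} and in checking the projective/injective degenerate cases via \Cref{ARprop} is exactly the (implicit) content of the paper's one-line justification.
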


\begin{remark}
    For any objects $\XX$ and $\YY$ in $\emod{m}{A}$, there are functorial isomorphisms
    $$\mathbb{E}(\YY,\XX)\cong D\Hom_{\uemod{m}{A}}(\AR{m}^-(\XX),\YY)\cong D\Hom_{\oemod{m}{A}}(\XX,\AR{m}(\YY)).$$
    This is because, by \Cref{ARprop}, $\YY\cong\AR{m}^-(\AR{m}(\YY))\oplus P$ for some $P\in\proj A$. Then by \Cref{cor:AR}, there is a functorial isomorphism
    $$\Hom_{\oemod{m}{A}}(\XX,\AR{m}(\YY))\cong D\mathbb{E}(\AR{m}^-(\AR{m}(\YY)),\XX)\cong D\mathbb{E}(\YY,\XX),$$
    which shows $\mathbb{E}(\YY,\XX)\cong  D\Hom_{\oemod{m}{A}}(\XX,\AR{m}(\YY))$. Similarly, we can also show $\mathbb{E}(\YY,\XX)\cong D\Hom_{\uemod{m}{A}}(\AR{m}^-(\XX),\YY)$.
\end{remark}

We refer to \cite[Section~3.3]{INP} for the notion of Auslander-Reiten quiver of a Krull–Schmidt extriangulated category that has Auslander-Reiten triangles.

\begin{example}
    When the algebra $A$ is hereditary, the Auslander-Reiten quiver of $\emod{m}{A}$ is the full subquiver of the Auslander-Reiten quiver of $D^b(\mod A)$ consisting of the vertices indexed by indecomposable objects $M[i]$ for all $M\in\mod A$ and $0\leq i\leq m-1$. This is because, due to that $A$ is hereditary, the indecomposable objects in $\emod{m}{A}$ are the $M[i]$'s. Moreover, for any indecomposable non-projective object $\ZZ=M[i]$, we have $(\nu\pp\ZZ)[-1]\in\emod{m}{A}$. Hence, the Auslander-Reiten triangle \eqref{eq:KL} is also an Auslander-Reiten triangle in $\emod{m}{A}$. Dually, any Auslander-Reiten triangle in $D^b(\mod A)$ starting at an indecomposable non-injective object in $\emod{m}{A}$ is also an Auslander-Reiten triangle in $\emod{m}{A}$.
    
    For instance, if $A=\k Q$ for $Q=1\to2\to 3$, then the Auslander-Reiten quiver of $\emod{2}{A}$ is as follows.
    \[\xymatrix@R=.5cm@C=.5cm{
    &&P_1\ar[rd]&&P_3[1]\ar[rd]&&S_2[1]\ar[rd]&&I_1[1]\\
    &P_2\ar[ru]\ar[rd]&&I_2\ar[ru]\ar[rd]&&P_2[1]\ar[ru]\ar[rd]&&I_2[1]\ar[ru]\\
    P_3\ar[ru]&&S_2\ar[ru]&&I_1\ar[ru]&&P_1[1]\ar[ru]
    }\]
\end{example}

\begin{example}\label{ex:AR}
    Let $A=\k Q/I$, where $Q$ is the quiver $\xymatrix{1 \ar@/^/[r]^{\alpha} & 2 \ar@/^/[l]^{\beta}}$ and $I=\langle\alpha\beta\rangle$. To the vertices $1$ and $2$, the corresponding indecomposable projective $A$-modules are $P_1=\begin{smallmatrix}
        1\\2
    \end{smallmatrix}$ and $P_2=\begin{smallmatrix}
        2\\1\\2
    \end{smallmatrix}$ respectively, while the corresponding indecomposable injective $A$-modules are $I_1=\begin{smallmatrix}
        2\\1
    \end{smallmatrix}$ and $I_2=\begin{smallmatrix}
        2\\1\\2
    \end{smallmatrix}=P_2$ respectively. Then the Auslander-Reiten quiver of the 2-extended module category $\emod{2}{A}$ is as follows, where two $(0\to I_1)$'s, $(S_2\to 0)$'s and $(0\to S_2)$'s are identified, respectively.
    \[\xymatrix@C=.25cm@R=.5cm{
    &\underline{\underline{S_2\to 0}}\ar[rd]\ar@{}[rr]^(.25){}="a"^(.75){}="b" \ar@{--} "a";"b"
    &&S_1\to 0\ar[rd]\ar@{}[rr]^(.25){}="a"^(.75){}="b" \ar@{--} "a";"b"
    &&0\to S_1\ar[rd]\ar@{}[rr]^(.25){}="a"^(.75){}="b" \ar@{--} "a";"b"
    &&\underline{\underline{\underline{0\to S_2}}}\\
    \underline{0\to I_1}\ar[ru]\ar[rd]\ar@{}[rr]^(.25){}="a"^(.75){}="b" \ar@{--} "a";"b"
    &&P_1\to 0\ar[ru]\ar[rd]\ar@{}[rr]^(.25){}="a"^(.75){}="b" \ar@{--} "a";"b"
    &&I_1\to P_1\ar[ru]\ar[rd]\ar@{}[rr]^(.25){}="a"^(.75){}="b" \ar@{--} "a";"b"
    &&\underline{0\to I_1}\ar[ru]\ar[rd]\\
    &\underline{\underline{\underline{0\to S_2}}}\ar[ru]\ar[rd]\ar@{}[rr]^(.25){}="a"^(.75){}="b" \ar@{--} "a";"b"
    &&P_2\to P_1\ar[ru]\ar[rd]\ar@{}[rr]^(.25){}="a"^(.75){}="b" \ar@{--} "a";"b"
    &&I_1\to I_2\ar[ru]\ar[rd]\ar@{}[rr]^(.25){}="a"^(.75){}="b" \ar@{--} "a";"b"
    &&\underline{\underline{S_2\to 0}}\\
    &&0\to P_1\ar[ru]\ar[rd]\ar@{}[rr]^(.25){}="a"^(.75){}="b" \ar@{--} "a";"b"
    &&P_2\to I_2\ar[ru]\ar[rd]\ar@{}[rr]^(.25){}="a"^(.75){}="b" \ar@{--} "a";"b"
    &&I_1\to 0\ar[ru]\\
    &&&0\to P_2\ar[ru]\ar@{}[rr]^(.25){}="a"^(.75){}="b" \ar@{--} "a";"b"
    &&I_2\to 0\ar[ru]
    }\]
    Here, each indecomposable object is denoted by a 2-term complex $X_1\to X_2$ of modules $X_1, X_2\in\mod A$, with the morphism nonzero in the radical, unless $X_1$ or $X_2$ is zero. Since $A$ is a gentle algebra, according to the classification of indecomposable objects in $D^b(\mod A)$ provided in \cite{BM}, the indecomposable objects in $\emod{2}{A}$ are the 14 objects depicted in the quiver above.
    
    Let $\PP$ be the 3-term complex $\left(P_2\xrightarrow{\beta\alpha} P_2\xrightarrow{\alpha} P_1\right)\oplus\left(P_2\to 0\to 0\right)$ in $K^b(\proj A)$. It is straightforward to check that $\PP$ is silting. Moreover, we have 
    $$\T(\PP)=\add\{I_1\to P_1,0\to S_1,S_1\to 0\},$$
    and
    $$\F(\PP)=\add\{0\to S_2,0\to P_1,0\to P_2,P_1\to 0,P_2\to P_1,P_2\to I_2,I_2\to 0\}.$$
    Since $H^{[-1,0]}(\PP)=(I_1\to P_1)$, by \Cref{prop:genTP}, $\T(\PP)=\Fac_2(I_1\to P_1)$ and has enough projectives $\add(I_1\to P_1)$. The canonical triangle of $I_1\to 0$ with respect to $(\T(\PP),\F(\PP))$ is
    $$(I_1\to P_1)\to(I_1\to 0)\to (P_1\to 0)\to (I_1\to P_1)[1].$$
    So by \Cref{tf}, we have $\mathfrak{t}(I_1\to 0)=(I_1\to P_1)$. On the other hand, since $(I_2\to 0)\in\F(\PP)$, we have $\mathfrak{t}(I_2\to 0)=0$. Hence, by \Cref{prop:tpp1}~(d), $\T(\PP)$ has enough injectives $\add(I_1\to P_1)$. Similarly, we have $H^{[-1,0]}(\nu\PP[-1])=(P_1\oplus I_2\to 0)$ and $\F(\PP)=\Sub_2(P_1\oplus I_2\to 0)$, which has enough injectives $\add(P_1\oplus I_2\to 0)$ and enough projectives $\add(0\to P_1\oplus P_2)$.
\end{example}

We conclude this section with a generalization of \cite[Proposition~5.8]{AS}.

\begin{proposition}\label{prop:as1}
    For any objects $\XX$ and $\YY$ in $\emod{m}{A}$, the following are equivalent.
    \begin{itemize}
        \item[(1)] $\Hom(\XX,\AR{m}(\YY)[j])=0$ for any $j\leq 0$.
        \item[(2)] $\Hom(\Fac_{m}(\XX),\AR{m}(\YY)[j])=0$ for any $j\leq 0$.
        \item[(3)] $\Hom(\Fac_{m}(\XX),\AR{m}(\YY))=0$.
        \item[(4)] $\mathbb{E}(\YY,\Fac_{m}(\XX))=0$. 
    \end{itemize} 
\end{proposition}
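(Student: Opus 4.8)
The plan is to prove the cycle of implications $(1)\Rightarrow(2)\Rightarrow(3)\Rightarrow(4)\Rightarrow(1)$.

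The first three are short. For $(1)\Rightarrow(2)$, apply \Cref{lem:asfac} to the subcategory $\X=\add\XX$ and the object $\AR{m}(\YY)\in\emod{m}{A}$. The implication $(2)\Rightarrow(3)$ is the case $j=0$. For $(3)\Rightarrow(4)$, let $\WW\in\Fac_{m}(\XX)$; the isomorphism $\mathbb{E}(\YY,\WW)\cong D\Hom_{\oemod{m}{A}}(\WW,\AR{m}(\YY))$ recorded in the remark following \Cref{cor:AR} exhibits $\mathbb{E}(\YY,\WW)$ as the dual of a quotient of $\Hom(\WW,\AR{m}(\YY))$, which is $0$ by $(3)$; hence $\mathbb{E}(\YY,\WW)=0$.

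The substance is $(4)\Rightarrow(1)$, which I would prove by adapting \cite[Proposition~5.8]{AS} to the extriangulated category $\emod{m}{A}$. Both conditions are additive in $\YY$, so one may assume $\YY$ indecomposable; and if $\YY$ is projective in $\emod{m}{A}$ then $\AR{m}(\YY)=0$ by \Cref{ARprop}(iii) and there is nothing to prove. The first step is to remove the shifts. Applying $\Hom(\XX,-)$ to the shift by $[j]$ of the triangle~\eqref{eq:ARdef}, using $\Hom(\XX,M[k])=0$ for $k\le -1$ and $M\in\mod A$ (a consequence of $\Hom(\D^{\leq 0},\D^{\geq 1})=0$), and then the Nakayama duality~\eqref{eq:Nakayama} together with $\ppm(\YY)\in K^{b}(\proj A)$, one obtains functorial isomorphisms
\[\Hom(\XX,\AR{m}(\YY)[j])\cong D\Hom(\ppm(\YY),\XX[1-j])\qquad(j\le 0);\]
equivalently, canonical truncation of $\XX[-j]$ (and the same $t$-structure vanishing) gives $\Hom(\XX,\AR{m}(\YY)[-l])\cong\Hom(\XX^{(l)},\AR{m}(\YY))$, where $\XX^{(l)}:=H^{[-(m-1-l),0]}(\XX)[l]\in\emod{m}{A}$, so it suffices to show $\Hom(\XX^{(l)},\AR{m}(\YY))=0$ for $0\le l\le m-1$.

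Two further ingredients are needed. First, a stability property: $\Fac_{m}(\XX)$ is closed under the operation $\ZZ\mapsto H^{[-(m-2),0]}(\ZZ)[1]$ --- whence, iterating, $\XX^{(l)}\in\Fac_{m}(\XX)$ --- which I would deduce from \Cref{lem:fac1}, \Cref{exm:fac}, the recursion~\eqref{eq:rec} and the octahedral axiom. Second, granting this, the remark after \Cref{cor:AR} and hypothesis $(4)$ give $\Hom_{\oemod{m}{A}}(\XX^{(l)},\AR{m}(\YY))=0$, i.e.\ every morphism $\XX^{(l)}\to\AR{m}(\YY)$ factors through an injective object of $\emod{m}{A}$; one must upgrade this to $\Hom(\XX^{(l)},\AR{m}(\YY))=0$ using that $\AR{m}(\YY)$ has no nonzero injective direct summands (\Cref{ARprop}(i)). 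Since $\emod{m}{A}$ is not abelian for $m>1$, the classical injective-envelope argument is not available; instead I would exploit that $\emod{m}{A}$ has enough injectives $(\inj A)[m-1]$ (\Cref{lem:enough}), take a minimal injective copresentation of $\AR{m}(\YY)$ (its inflation being left minimal precisely because $\AR{m}(\YY)$ has no injective summands), and use the identification $\Hom(I[m-1],\AR{m}(\YY))\cong\Hom_{A}(I,H^{-(m-1)}(\AR{m}(\YY)))$ for $I\in\inj A$ --- obtained by factoring through the canonical truncation $\sigma^{\leq -(m-1)}(\AR{m}(\YY))$ --- to reduce the vanishing to a statement about maps of $A$-modules. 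I expect this final step, converting vanishing in the injectively stable quotient into honest $\Hom$-vanishing without an abelian structure, to be the main obstacle, with the stability property above a secondary difficulty requiring careful bookkeeping with cohomological truncations.
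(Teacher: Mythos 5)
Your handling of $(1)\Rightarrow(2)\Rightarrow(3)\Rightarrow(4)$ is correct and coincides with the paper's (which uses \Cref{lem:asfac} and \Cref{cor:AR} in exactly the same way). Your reduction of $(4)\Rightarrow(1)$ to showing $\Hom(\XX^{(l)},\AR{m}(\YY))=0$ for the truncations $\XX^{(l)}=H^{[-(m-1)+l,0]}(\XX)[l]$ is also sound: the isomorphism $\Hom(\XX,\AR{m}(\YY)[-l])\cong\Hom(\XX^{(l)},\AR{m}(\YY))$ follows from $\AR{m}(\YY)\in\D^{\geq-(m-1)}$, and $\XX^{(l)}\in\Fac_m(\XX)$ can indeed be checked (take $l$ triangles with $\XX_i=0$ to undo the shift, one triangle $\sigma^{\leq -m+l}(\XX)\to\XX\to H^{[-(m-1)+l,0]}(\XX)\to{}$ with $\XX_{l+1}=\XX$, and $m-l-1$ further zero triangles; note that your route via \Cref{exm:fac}~(2) alone only lands you in $\Fac_{1+l}$, and the inclusion \eqref{eq:s subset s-1} goes the wrong way, so the order of the triangles matters). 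So far this is a mild repackaging of the paper's argument.

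The genuine gap is the final step, and it is not merely a technical obstacle: the principle you are trying to invoke is false. Knowing that every morphism $u:\XX^{(l)}\to\AR{m}(\YY)$ factors through an injective object, together with the fact that $\AR{m}(\YY)$ has no nonzero injective direct summands, does \emph{not} force $u=0$ — already for $m=1$ this fails (e.g.\ for $A=\k(1\to2\to3)$ the surjection $I_2\to S_2$ factors through the injective $I_2$, yet $S_2$ has no injective summand). A minimal injective copresentation of $\AR{m}(\YY)$ does not repair this, because the obstruction is not about where the factorization lands but about producing a \emph{retraction-like} identity. The paper's proof gets around this by not working with an arbitrary morphism out of a fixed object of $\Fac_m(\XX)$: given $f:\XX[-j]\to\AR{m}(\YY)$, it uses the octahedral axiom to manufacture an object $\YY'\in\Fac_m(\XX)$ together with a morphism $f':\YY'\to\AR{m}(\YY)$ that (a) recovers $f$, and (b) sits in a triangle $\ZZ''[-1]\to\YY'\xrightarrow{f'}\AR{m}(\YY)\to\ZZ''$ with $\ZZ''\in\emod{m}{A}$. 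Point (b) is what makes $\Hom(f',\VV)$ surjective for every injective $\VV$ of $\emod{m}{A}$ (since $\Hom(\ZZ''[-1],\VV)=\mathbb{E}(\ZZ'',\VV)=0$); combined with the stable vanishing one gets $f'=(h\circ h')^{p}\circ f'$ with $h\circ h'$ a radical endomorphism of $\AR{m}(\YY)$ (here \Cref{ARprop}~(i) enters), hence nilpotent, hence $f'=0$ and $f=0$. In other words, you must first complete your morphism to an $\mathbb{E}$-deflation from an object of $\Fac_m(\XX)$ onto $\AR{m}(\YY)$; without that extra structure the passage from stable vanishing to honest vanishing cannot be made.
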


\begin{proof}
    (1)$\implies$(2): This is a direct application of \Cref{lem:asfac}.

    (2)$\implies$(3): Trivial. 
    
    (3)$\implies$(4): This follows directly from \Cref{cor:AR}.

    (4)$\implies$(1): Since both $\XX$ and $\AR{m}(\YY)$ are objects in $\emod{m}{A}$, we have $\Hom(\XX,\AR{m}(\YY)[j])=0$ for any $j\leq -m$. Now, let $-(m-1)\leq j\leq 0$ and take an arbitrary $f\in\Hom(\XX[-j],\AR{m}(\YY))$. Extend $f$ to a triangle in $D^b(\mod A)$
    $$\XX[-j]\xrightarrow{f} \AR{m}(\YY)\to \ZZ\xrightarrow{g} \XX[-j+1].$$
    Then $\ZZ\in\add\AR{m}(\YY)*\add\XX[-j+1]\subseteq\D^{[-(m-1),0]}*\D^{[-m+j,j-1]}\subseteq\D^{[-m+j,0]}$. So by canonical truncation of $\ZZ$, there is a triangle
    $$\ZZ'\xrightarrow{h}\ZZ\to \ZZ'' \to \ZZ'[1].$$
    with $\ZZ'=H^{[-m+j,-m]}(\ZZ)$ and $\ZZ''=H^{[-(m-1),0]}(\ZZ)$. Taking the composition $g\circ h$ and by the octahedral axiom, we have the following commutative diagram of triangles
    \begin{equation}\label{eq:cd02}
    \xymatrix{
    & \ZZ''[-1]\ar@{=}[r]\ar[d] & \ZZ''[-1]\ar[d]\\
    \XX[-j]\ar@{=}[d]\ar[r] & \YY'\ar[r]\ar[d]^{f'}\ar[r] & \ZZ'\ar[r]^{g\circ h\hspace{.6cm}}\ar[d]^{h} & \XX[-j+1]\ar@{=}[d]\\
    \XX[-j]\ar[r]^{f} & \AR{m}(\YY)\ar[r]\ar[d] & \ZZ\ar[r]^{g\hspace{.8cm}}\ar[d] & \XX[-j+1]\\
    & \ZZ''\ar@{=}[r] & \ZZ''
    }
    \end{equation}
    We claim that $\YY'\in\Fac_m(\XX)$. Indeed, by the triangle in the second column of diagram~\eqref{eq:cd02}, we have $$\YY'\in\add \ZZ''[-1]*\add \AR{m}(\YY)\subseteq\D^{[-(m-2),1]}*\D^{[-(m-1),0]} \subseteq \D^{[-(m-1),1]}.$$
    On the other hand, by the triangle in the second row of diagram~\eqref{eq:cd02}, we have $$\YY'\in\add\XX[-j]*\add \ZZ'\subseteq\D^{[-(m-1)+j,j]}*\D^{[-m+j,-m]} \subseteq\D^{[-m+j,j]}.$$
    So we have $\YY'\in \D^{[-(m-1),1]}\cap\D^{[-m+j,j]}=\D^{[-(m-1),j]}\subseteq\emod{m}{A}$. Then $\YY'[j]\in\D^{[-(m-1)-j,0]}\subseteq\emod{m}{A}$. Hence, by \Cref{exm:fac}~(2), we only need to show $\YY'[j]\in\Fac_{m+j}(\XX)$. Let $\WW=\ZZ'[-m]\in \D^{[j,0]}\subseteq\emod{m}{A}$. Then $\WW[m-1+j]=\ZZ'[-1+j]\in\D^{[-(m-1),-(m-1)-j]}\subseteq\emod{m}{A}$. Shifting and rotating the triangle in the second row of diagram~\eqref{eq:cd02}, we have a triangle
    \begin{equation}\label{eq:tri01}
        \WW[m-1+j]\to\XX\to \YY'[j]\to \WW[m+j].
    \end{equation}
    So by definition, $\YY'[j]\in\Fac_1(\XX)$. If $j=-(m-1)$, we are done. If $j>-(m-1)$, by \Cref{exm:fac}~(1), $\WW[m-1+j]\in\Fac_{m-1+j}(\XX)$. In this case, by triangle~\eqref{eq:tri01} and \Cref{eq:rec}, we have $\YY'[j]\in\Fac_{m+j}(\XX)$. Therefore, in any case, we have $\YY'\in \Fac_{m}(\XX)$. 
    
    Thus, $\mathbb{E}(\YY,\YY')=0$. So by \Cref{cor:AR}, $\Hom_{\oemod{m}{A}}(\YY',\AR{m}(\YY))=0$. Therefore, the morphism $f'$ in the diagram~\eqref{eq:cd02} factors through an injective object $\VV$ in $\emod{m}{A}$, i.e., there are morphisms $g':\YY'\to \VV$ and $h':\VV\to \AR{m}(\YY)$ such that $f'=h'\circ g'$. 
    $$\xymatrix{
    &\YY'\ar[dd]^{f'}\ar@{-->}[ld]_{g'}\\
    \VV\ar@{-->}[rd]_{h'}\\
    &\AR{m}(\YY)
    }$$
    Applying $\Hom(-,\VV)$ to the triangle in the second column of diagram~\eqref{eq:cd02}, we obtain an exact sequence
    $$\Hom(\AR{m}(\YY),\VV)\xrightarrow{\Hom(f',\VV)}\Hom(\YY',\VV)\to\Hom(\ZZ''[-1],\VV),$$
    where the last item is zero because $\ZZ''\in\emod{m}{A}$ and $\VV$ is injective in $\emod{m}{A}$. Thus, the map $\Hom(f',\VV)$ is surjective. So the morphism $g'$ factors through $f'$, i.e., there is $h'':\AR{m}(\YY)\to \VV$ such that $g'=h''\circ f'$. Therefore, we have $f'=h'\circ h''\circ f'$, which implies $f'=(h'\circ h'')^p\circ f'$ for any positive integer $p$. However, since by \Cref{ARprop}~(i), there is no nonzero direct summand of $\AR{m}(\YY)$ which is injective in $\emod{m}{A}$, the morphism $h'\circ h'':\AR{m}(\YY)\to\AR{m}(\YY)$ which factors through $\VV$ is in the radical. Hence, $h'\circ h''$ is nilpotent (since the endomorphism algebra $\End(\AR{m}(\YY))$ is finite-dimensional). Thus, we have $f'=0$, which implies $f=0$. Therefore, we have $\Hom(\XX,\AR{m}(\YY)[j])=0$ for any $j\leq 0$.
\end{proof}

\section{\texorpdfstring{$\tau$}\ -tilting theory for extended module categories}\label{sec:tau-tilting}

In this section, we generalize the $\tau$-tilting theory from $\mod A$ to $\emod{m}{A}$. Note that the extriangulated category $\emod{m}{A}$ admits arbitrary negative extensions $\mathbb{E}^j(\XX,\YY)=\Hom(\XX,\YY[j])$ for $j<0$. We refer to \cite{GNP} for more discussions on negative extensions on extriangulated categories.

\begin{definition}
    An object $\XX$ in $\emod{m}{A}$ is called $\AR{m}$-rigid if $$\Hom(\XX,\AR{m}(\XX))=0.$$
    A $\AR{m}$-rigid object $\XX$ in $\emod{m}{A}$ is called positive $\AR{m}$-rigid if in addition
    \begin{equation}\label{eq:ptr}
        \mathbb{E}^j(\XX,\AR{m}(\XX))=0,\ j<0.
    \end{equation}
\end{definition}

In the above definition, both $\XX$ and $\AR{m}(\XX)$ are in $\emod{m}{A}$, so for any $j\leq -m$, we have $\Hom(\XX,\AR{m}(\XX)[j])=0$. Therefore, the range of values for $j$ in condition~\eqref{eq:ptr} can be replaced with $-(m-1)\leq j\leq -1.$

\begin{example}
    In the usual case (i.e., $m=1$), any $\AR{m}$-rigid module is positive $\AR{m}$-rigid. However, this is not true in general. In \Cref{ex:AR}, since $\AR{2}(0\to S_1)=(S_1\to 0)$, the object $(0\to S_1)$ in $\emod{2}{A}$ is $\AR{2}$-rigid but not positive $\AR{2}$-rigid.
\end{example}

\begin{remark}
    By \Cref{prop:as1}, an object $\XX\in\emod{m}{A}$ is positive $\AR{m}$-rigid if and only if $\mathbb{E}(\XX,\Fac_m(\XX))=0$. This indicates that in defining positive $\AR{m}$-rigid objects, it is possible to avoid assuming both the existence of negative extensions and the existence of the Auslander-Reiten translation.
\end{remark}

The following notion is a generalization of $\tau$-rigid pairs in the module category.

\begin{definition}\label{def:taup}
    A pair $(\XX,P)$ of $\XX\in\emod{m}{A}$ and $P\in\proj A$ is called a positive $\AR{m}$-rigid pair in $\emod{m}{A}$ if $\XX$ is positive $\AR{m}$-rigid and
    \begin{equation}\label{eq:ptrp}
        \Hom(P,\XX[i])=0,\ i\leq 0.
    \end{equation}
\end{definition}

Since $P$ is a complex of projective modules concentrated in degree 0 and $\XX\in\emod{m}{A}$, the range of values for $i$ in condition~\eqref{eq:ptrp} can be replaced with $i\in\mathbb{Z}$, or with $-(m-1)\leq i\leq 0$.

For any $\XX\in\emod{m}{A}$, we define two subcategories of $\emod{m}{A}$ as follows:
$$\XX^{\bot_{\leq 0}}=\{\YY\in\emod{m}{A}\mid \Hom(\XX,\YY[i])=0,\ \forall i\leq 0\},$$
and
$$^{\bot_{\leq 0}}\XX=\{\YY\in\emod{m}{A}\mid \Hom(\YY,\XX[i])=0,\ \forall\ i\leq 0\}.$$
By \Cref{prop:as1}, for any positive $\AR{m}$-rigid object $\XX$ in $\emod{m}{A}$, we have 
\begin{equation}\label{eq:inc}
    \Fac_m(\XX)\subseteq{}^{\bot_{\leq 0}}(\AR{m}(\XX)).
\end{equation}
Then for any positive $\AR{m}$-rigid pair $(\XX,P)$, we have
\begin{equation}
    \Fac_m(\XX)\subseteq{}^{\bot_{\leq 0}}(\AR{m}(\XX))\cap P^{\bot_{\leq 0}}.
\end{equation}
Indeed, for any $\ZZ\in\Fac_m(\XX)$, by definition, there are triangles
$$\ZZ_i\to\XX_i\to\ZZ_{i-1}\to\ZZ_{i}[1],\ 1\leq i\leq m,$$
with $\ZZ_0=\ZZ,\ZZ_1,\cdots,\ZZ_m\in\emod{m}{A}$ and $\XX_1,\cdots,\XX_m\in\add\XX$. By applying $\Hom(P,-)$ to these triangles, we obtain isomorphisms
$$\Hom(P,\ZZ_{i-1}[j])\cong \Hom(P,\ZZ_{i}[j+1]),\ 1\leq i\leq m,\ j\in\mathbb{Z}.$$
Then for any $-(m-1)\leq j\leq 0$, we have
$$\Hom(P,\ZZ[j])\cong \Hom(P,\ZZ_m[m+j])=0,$$
due to $\ZZ_m[m+j]\in \D^{\leq -1}$. This implies $\ZZ\in P^{\bot_{\leq 0}}$.

\begin{definition}\label{def:tt}
    A positive $\AR{m}$-rigid object $\XX$ in $\emod{m}{A}$ is called $\AR{m}$-tilting if 
    \begin{equation}\label{eq:deftt}
    {}^{\bot_{\leq 0}}(\AR{m}(\XX))\subseteq\Fac_{m}(\XX).
    \end{equation}
    A positive $\AR{m}$-rigid pair $(\XX,P)$ is called $\AR{m}$-tilting if 
    \begin{equation}\label{eq:defttp}
    {}^{\bot_{\leq 0}}(\AR{m}(\XX))\cap P^{\bot_{\leq 0}}\subseteq\Fac_{m}(\XX).
    \end{equation}
\end{definition}

In the usual case (i.e., $m=1$), $\AR{m}$-tilting objects in $\emod{m}{A}$ are exactly $\tau$-tilting modules in $\mod A$, and $\AR{m}$-tilting pairs defined here coincide with usual ones, see \cite[Theorem~2.12 and Corollary~2.13]{AIR}.

\begin{example}\label{ex:taut}
    In \Cref{ex:AR}, for the positive $\AR{2}$-rigid pair $(I_1\to P_1,0)$, there are exactly two basic $\AR{2}$-tilting pairs in $\emod{2}{A}$ that contains it as a direct summand: $\left((P_2\to P_1)\oplus(I_1\to P_1),0\right)$ and $(I_1\to P_1,P_2)$. For the positive $\AR{2}$-rigid pair $(0,P_2)$, there are exactly three basic $\AR{2}$-tilting pairs in $\emod{2}{A}$ that contains it as a direct summand: $(I_1\to P_1,P_2)$, $(S_1\to 0,P_2)$ and $(0,P_1\oplus P_2)$.
\end{example}

We denote by
\begin{itemize}
    \item $\tp$ the set of (isoclasses of) basic $\AR{m}$-tilting pairs in $\emod{m}{A}$,
    \item $\ft$ the set of functorially finite $s$-torsion pairs in $\emod{m}{A}$,
    \item $\sil$ the set of (isoclasses of) basic $(m+1)$-term silting complexes in $K^b(\proj A)$.
\end{itemize}
Recall from \Cref{def:mpp} that $\ppm(\XX)$ denotes a minimal projective presentation of an object $\XX$ in $\emod{m}{A}$.

\begin{theorem}\label{thm:bi}
    There is the following commutative diagram of bijections
    \begin{center}
    \begin{tikzpicture}
    \draw (-2,0)node{$\tp$} (2,1.5)node{$\ft$} (2,-1.5)node{$\sil$};
    \draw (2,2.5)node{$(\Fac_{m}(\XX),\XX^{\bot_{\leq 0}})$} (2,2)node[rotate=-90]{$\in$};
    \draw (2,-2.5)node{$\ppm(\XX)\oplus P[m]$} (2,-2)node[rotate=90]{$\in$};
    \draw (-3.7,0)node{$(\XX,P)\in$};
    \draw[->] (-.9,.2) tonode[above]{$\varphi$} (1.6,1);
    \draw[->] (-.9,-.2) tonode[below]{$\psi$} (1.6,-1);
    \draw[->] (2,-1) tonode[right]{$\chi$} (2,1);
    \draw[|->] (-3.7,.5) to (.5,2);
    \draw[|->] (-3.7,-.5) to (.5,-2);
    \draw[|->] (5.5,-1) to (5.5,.8);
    \draw (5.5,1.5)node{$\left(\Fac_{m}\left(H^{[-(m-1),0]}(\PP)\right),\right.$} (5.5,1)node{$\left.\Sub_{m}\left(H^{[-(m-1),0]}(\nu\PP[-1])\right)\right)$} (5.5,-1.5)node{$\PP$};
    \draw (3.1,1.5)node{$\ni$} (3.5,-1.5)node{$\ni$};
    \end{tikzpicture}
    \end{center}
\end{theorem}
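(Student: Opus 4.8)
The plan is to establish the three maps $\varphi$, $\psi$, $\chi$ individually, check that the diagram commutes, and then obtain the bijectivity of each map from the bijectivity of the other two together with $\chi$ being a bijection. The map $\chi$ from $\sil$ to $\ft$ is already a bijection by \Cref{rmk:G} (this is \cite[Theorem~4.1]{G}), with $\chi(\PP)=(\T(\PP),\F(\PP))$, and by \Cref{prop:genTP} we may rewrite $\T(\PP)=\Fac_m(H^{[-(m-1),0]}(\PP))$ and $\F(\PP)=\Sub_m(H^{[-(m-1),0]}(\nu\PP[-1]))$, which matches the formula in the diagram. So the real work is to construct $\varphi$ and $\psi$ and verify commutativity; then $\varphi=\chi\circ\psi$ forces $\varphi$ to be a bijection once $\psi$ is, and conversely.

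First I would analyze $\psi$, sending an $\AR{m}$-tilting pair $(\XX,P)$ to the complex $\ppm(\XX)\oplus P[m]$. Using the equivalence $\overline{\overline F}$ from \Cref{prop:ARfun} (the cohomology functor $H^{[-(m-1),0]}$ inducing $K^{[-m,0]}(\proj A)/\add(A\oplus A[m])\simeq\uemod{m}{A}$), the object $\ppm(\XX)$ is the canonical lift of $\XX$ to $K^{[-m,0]}(\proj A)$, and $P[m]$ contributes the "negative" part. The key computation is that $\ppm(\XX)\oplus P[m]$ is presilting: the vanishing $\Hom(\ppm(\XX),\ppm(\XX)[j])=0$ for $j>0$ should translate, via \Cref{lem:bz1-2.7} and the triangle \eqref{eq:tripp}, into $\Hom(H^{[-(m-1),0]}(\ppm(\XX)),\XX[j])=0$, i.e. $\Hom(\XX,\XX[j])=0$ for $j>0$ — which needs $\XX$ to be positive $\AR{m}$-rigid; here one uses $\mathbb{E}(\XX,\Fac_m(\XX))=0$ (the remark after \Cref{prop:as1}) and the Auslander–Reiten formula \Cref{cor:AR} to convert $\Hom(\XX,\AR{m}(\XX)[j])=0$ for $j\le 0$ into $\mathbb{E}(\XX,\XX)=0$ and its higher analogues. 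The cross terms $\Hom(\ppm(\XX),P[m][j])$ and $\Hom(P[m],\ppm(\XX)[j])$ for $j>0$ vanish by the projectivity of $P$ concentrated in degree $0$ together with condition \eqref{eq:ptrp}, $\Hom(P,\XX[i])=0$, and the duality \eqref{eq:Nakayama}. Then one shows that this presilting complex is in fact silting, i.e. generates $K^b(\proj A)$: this should follow by a counting/completeness argument — an $\AR{m}$-tilting pair has "full rank" — or, more cleanly, by invoking the bijection $\chi$: once we know $\psi(\XX,P)$ is presilting and that $\varphi(\XX,P)=\chi(\psi(\XX,P))$ is a functorially finite $s$-torsion pair whose torsion class is $\Fac_m(\XX)$, Gupta's classification forces $\psi(\XX,P)$ to be silting.

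Next I would treat $\varphi$, sending $(\XX,P)$ to $(\Fac_m(\XX),\Sub_m(\AR{m}(\XX)\oplus\nu P[m-1]))$. That $\Fac_m(\XX)$ is an $s$-torsion class is where the $\AR{m}$-tilting condition \eqref{eq:defttp} is used: by \Cref{prop:fac} it suffices to see $\Fac_m(\XX)$ is a torsion class closed under $m$-factors, and closure under $m$-factors is immediate from the recursive description \eqref{eq:rec} and \eqref{eq:s subset s-1}; the torsion-pair axiom $\emod{m}{A}=\Fac_m(\XX)*\F$ with $\F={}^{\bot}\Fac_m(\XX)$, and the identification of $\F$ with $\Sub_m(\AR{m}(\XX)\oplus\nu P[m-1])$, is the substantive part, and here I expect to mimic the proof of \cite[Theorem~2.7]{AIR} using \Cref{prop:tpp1} (enough projectives/injectives in $\T(\PP)$ and $\F(\PP)$) and \Cref{prop:as1}: the defining equality \eqref{eq:defttp} says exactly that $\F={}^{\bot_{\le 0}}(\AR{m}(\XX))\cap P^{\bot_{\le 0}}$ is the "coperp" needed. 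Functorial finiteness of $(\Fac_m(\XX),\F)$ follows because $\XX$ and the relevant injectives give approximations. To go backwards, from a functorially finite $s$-torsion pair $(\T,\F)$ one uses \Cref{rmk:ff} to write $\T=\Fac_m(\XX)$ with $\XX$ the direct sum of indecomposable Ext-projectives of $\T$ (which exist and are finite by functorial finiteness), and takes $P$ to be the projective $A$-modules $P_i$ not appearing as summands of $H^0(\XX)$ (so that $P^{\bot_{\le 0}}$ cuts out the correct class); one then checks $(\XX,P)$ is $\AR{m}$-tilting, essentially reversing the above.

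The main obstacle I anticipate is the "coperp" computation in the construction of $\varphi$: showing that for an $\AR{m}$-tilting pair $(\XX,P)$ the subcategory ${}^{\bot}\Fac_m(\XX)$ (with $\bot$ taken in degree $0$, i.e. the $\F$ of the torsion pair) coincides with $\Sub_m(\AR{m}(\XX)\oplus\nu P[m-1])$, and dually that $\Fac_m(\XX)={}^{\bot_{\le 0}}(\AR{m}(\XX))\cap P^{\bot_{\le 0}}$ really does force $(\Fac_m(\XX),\F)$ to be a \emph{functorially finite} torsion pair. This is the analogue of the hardest part of \cite{AIR}, and it will require careful use of the Auslander–Reiten triangles from \Cref{thm:AR}, the enough-projectives/injectives statements of \Cref{prop:tpp1} applied to $\PP=\psi(\XX,P)$, and the equivalences of \Cref{prop:ARfun}. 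A secondary obstacle is verifying that $\psi$ produces something \emph{basic} and well-defined on isoclasses — i.e. that $\ppm(\XX)\oplus P[m]$ has no repeated indecomposable summands — which rests on minimality of $\ppm(\XX)$ (\Cref{def:mpp}), the fact that summands of $\ppm(\XX)$ concentrated in top degree correspond to injective summands that $\AR{m}$ kills (\Cref{ARprop}(i)), and the disjointness built into the definition of a pair. Once these are in place, commutativity $\varphi=\chi\circ\psi$ is a direct comparison of formulas via \Cref{prop:genTP}, and the theorem follows formally.
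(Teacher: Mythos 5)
Your skeleton agrees with the paper's: $\chi$ comes from Gupta's bijection plus \Cref{prop:genTP}, $\psi$ is $(\XX,P)\mapsto\ppm(\XX)\oplus P[m]$ with the presilting condition matching positive $\AR{m}$-rigidity (this is the paper's \Cref{lem:pre=tau} and \Cref{prop:pre=taup}), and the diagram determines $\varphi$. But there is a genuine gap at the decisive step: showing that for an $\AR{m}$-\emph{tilting} pair the presilting complex $\PP=\ppm(\XX)\oplus P[m]$ is actually \emph{silting}. Neither of your two suggestions works as stated. The ``counting/completeness'' argument is unavailable, since the equality $|\XX|+|P|=|A|$ is deduced \emph{from} the theorem (via \cite[Corollary~2.8]{AI} applied to the silting complex), not an input to it. The appeal to Gupta's classification is circular: $\chi$ is only defined on silting complexes, so to run it you must first prove independently that $(\Fac_m(\XX),\F)$ is a functorially finite $s$-torsion pair (which is the hard part you defer to $\varphi$), and even then you would still have to identify the silting complex Gupta attaches to that torsion pair with your presilting complex $\PP$ --- it is not automatic that a presilting $\PP$ with $\T(\PP)=\T(\QQ)$ for some silting $\QQ$ is itself silting. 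The paper's argument is direct: using \Cref{lem:pre=tau} one gets the identity ${}^{\bot_{\leq 0}}(\AR{m}(\XX))\cap P^{\bot_{\leq 0}}=\T(\PP)$, so the tilting equality \eqref{eq:defttp} reads $\T(\PP)=\Fac_m(\XX)$; then $\thick\PP=K^b(\proj A)$ is checked by taking $\YY\in\mod A$ with $\Hom(\PP,\YY[i])=0$ for all $i$: the conditions for $i\geq 1$ put $\YY$ in $\T(\PP)=\Fac_m(\XX)$, while the conditions for $i\leq 0$ give, via \Cref{lem:bz1-2.7} and \Cref{lem:asfac}, $\Hom(\Fac_m(\XX),\YY)=0$, whence $\YY=0$. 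This is the step your proposal is missing; also note the converse direction (silting $\Rightarrow$ tilting) is immediate from \Cref{prop:genTP} and the same identity.

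Your third paragraph is a detour the paper does not need. Once $\psi$ is a bijection, the paper simply \emph{defines} $\varphi:=\chi\circ\psi$; the displayed formula for $\varphi$ then falls out of \Cref{prop:genTP} together with $H^{[-(m-1),0]}(\ppm(\XX)\oplus P[m])=\XX$ and $H^{[-(m-1),0]}(\nu(\ppm(\XX)\oplus P[m])[-1])=\AR{m}(\XX)\oplus\nu P[m-1]$. In particular the ``coperp computation'' you flag as the main obstacle --- identifying the torsion-free class with $\Sub_m(\AR{m}(\XX)\oplus\nu P[m-1])$ and proving functorial finiteness by mimicking \cite{AIR} --- is entirely sidestepped: it is exactly the statement $\F(\PP)=\Sub_m(H^{[-(m-1),0]}(\nu\PP[-1]))$ already established in \Cref{prop:genTP}. (One small slip: \eqref{eq:defttp} identifies the torsion class $\Fac_m(\XX)$ with ${}^{\bot_{\leq 0}}(\AR{m}(\XX))\cap P^{\bot_{\leq 0}}$, not the torsion-free class $\F$ as you write.) If you repair the silting step as above and replace the direct construction of $\varphi$ by the composition, the proof closes.
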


To prove this theorem, we need some preparations.

\begin{lemma}\label{lem:pre=tau}
    Let $\XX,\YY\in\emod{m}{A}$ and $P\in\proj A$. For any $i\geq 1$, there are isomorphisms
    \begin{equation}\label{iso1}
        \Hom(\ppm(\XX),\ppm(\YY)[i])\cong\Hom(\ppm(\XX),\YY[i])\cong D\Hom(\YY,\AR{m}(\XX)[1-i]),
    \end{equation}
    and
    \begin{equation}\label{iso2}
        \Hom(P[m],\ppm(\YY)[i])\cong\Hom(P,\YY[i-m]).
    \end{equation}
\end{lemma}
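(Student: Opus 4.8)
The plan is to establish the three isomorphisms by successively stripping off the ``tails'' of the projective presentations using the truncation triangles, and then to identify the remaining $\Hom$ with a $\Hom$ into $\AR{m}(\XX)$ via the Nakayama duality~\eqref{eq:Nakayama}. First I would treat~\eqref{iso2}, which is the easiest: since $P[m]\in K^b(\proj A)$ is a stalk complex of a projective module, $\Hom(P[m],-)$ only sees the cohomology in degree $0$ of its argument shifted appropriately; more precisely, applying $\Hom(P[m],-)$ to the canonical truncation triangle $H^{-m}(\ppm(\YY))[m]\to\ppm(\YY)\to H^{[-(m-1),0]}(\ppm(\YY))\to H^{-m}(\ppm(\YY))[m+1]$ and using that $H^{[-(m-1),0]}(\ppm(\YY))\cong\YY$ while $H^{-m}(\ppm(\YY))[m]$ and $H^{-m}(\ppm(\YY))[m+1]$ lie in $\D^{\leq -m}$, I get $\Hom(P[m],\ppm(\YY)[i])\cong\Hom(P[m],\YY[i])=\Hom(P,\YY[i-m])$ for $i\geq 1$ (the outer terms vanish because $P[m]$ is concentrated in degree $-m$ and $i\geq 1$ pushes the relevant cohomological degrees strictly below $-m$).

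For the first isomorphism in~\eqref{iso1}, I would apply $\Hom(\ppm(\XX),-)$ to the same truncation triangle for $\ppm(\YY)$. Since $\ppm(\XX)$ is a complex of projectives concentrated in degrees $[-m,0]$ and $H^{-m}(\ppm(\YY))[m]\in\D^{\leq -m}$, one has $\Hom(\ppm(\XX),H^{-m}(\ppm(\YY))[m+j])=0$ for $j\geq 0$ (this is exactly the kind of vanishing used in the proof of \Cref{lem:bz1-2.7} and of \Cref{prop:ARfun}: a bounded-above complex of projectives in degrees $\geq -m$ has no positive-degree maps to something in $\D^{\leq-m}$). Hence for $i\geq 1$ the truncation triangle yields $\Hom(\ppm(\XX),\ppm(\YY)[i])\cong\Hom(\ppm(\XX),H^{[-(m-1),0]}(\ppm(\YY))[i])=\Hom(\ppm(\XX),\YY[i])$.

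For the second isomorphism in~\eqref{iso1}, the duality~\eqref{eq:Nakayama} applied to $\ppm(\XX)\in K^b(\proj A)$ gives $\Hom(\ppm(\XX),\YY[i])\cong D\Hom(\YY[i],\nu\ppm(\XX))=D\Hom(\YY,\nu\ppm(\XX)[-i])$. Now I would use the triangle~\eqref{eq:ARdef}, namely $\AR{m}(\XX)\to\nu\ppm(\XX)[-1]\to H^0(\nu\ppm(\XX))[-1]\to\AR{m}(\XX)[1]$, i.e. after a shift, $\AR{m}(\XX)[2-i]\to\nu\ppm(\XX)[-i]\to H^0(\nu\ppm(\XX))[1-i]\to\AR{m}(\XX)[3-i]$. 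Applying $\Hom(\YY,-)$ and using that $\YY\in\D^{[-(m-1),0]}$ while $H^0(\nu\ppm(\XX))[1-i]\in\D^{[i-1,i-1]}\subseteq\D^{\geq 1}$ for $i\geq 2$, the terms involving $H^0(\nu\ppm(\XX))$ vanish and I get $\Hom(\YY,\nu\ppm(\XX)[-i])\cong\Hom(\YY,\AR{m}(\XX)[2-i])$; combining with the displayed duality, $\Hom(\ppm(\XX),\YY[i])\cong D\Hom(\YY,\AR{m}(\XX)[1-i])$ once the index is matched (the exponent $1-i$ rather than $2-i$ comes from the extra $[-1]$ already built into $\nu\ppm(\XX)[-1]$ versus $\nu\ppm(\XX)[-i]$; I will bookkeep this carefully). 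The remaining case $i=1$ needs slightly more care, since then $H^0(\nu\ppm(\XX))[1-i]=H^0(\nu\ppm(\XX))\in\D^{[0,0]}$ does meet the support of $\YY$; here I would instead observe directly from $\AR{m}(\XX)=H^{[-(m-1),0]}(\nu\ppm(\XX)[-1])$ and the truncation triangle for $\nu\ppm(\XX)[-1]$ (whose complementary pieces are $H^0(\nu\ppm(\XX))[-1]\in\D^{[1,1]}$, orthogonal to $\YY$ on both sides since $\Hom(\YY,\D^{[1,1]})$ and $\Hom(\YY,\D^{[1,1]}[-1])=\Hom(\YY,\D^{[0,0]})$ — wait, the latter is not zero) that $\Hom(\YY,\nu\ppm(\XX)[-1])\cong\Hom(\YY,\AR{m}(\XX))$ provided $\Hom(\YY,H^0(\nu\ppm(\XX))[-2])=0$ and $\Hom(\YY,H^0(\nu\ppm(\XX))[-1])\to\Hom(\YY,\AR{m}(\XX)[1])$ is handled; since $H^0(\nu\ppm(\XX))[-2]\in\D^{[2,2]}$ this first map's source vanishes, giving a surjection that I then argue is an isomorphism using $\YY\in\D^{\leq 0}$ and $H^0(\nu\ppm(\XX))[-1]\in\D^{\geq 1}$, so $\Hom(\YY,H^0(\nu\ppm(\XX))[-1])=0$ as well.

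The main obstacle I anticipate is precisely this index bookkeeping at the boundary value $i=1$: for $i\geq 2$ all the cohomological-degree estimates have comfortable slack and the orthogonality vanishings are automatic, but at $i=1$ one of the complementary cohomology pieces sits in degree $0$ or $1$, exactly on the edge of the support of $\Y\in\D^{[-(m-1),0]}$, so the argument that the connecting terms in the long exact sequence vanish must be made delicately (using both $\YY\in\D^{\leq0}$ and the piece lying in $\D^{\geq 1}$, rather than a single cohomological-amplitude count). Everything else is a routine application of the truncation triangles~\eqref{eq:tripp}/\eqref{eq:ARdef}, the vanishing lemma for maps out of bounded complexes of projectives, and the Nakayama duality~\eqref{eq:Nakayama}, in the same style as \Cref{lem:bz1-2.7} and \Cref{prop:ARfun}.
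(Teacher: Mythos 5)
Your proposal is correct and follows essentially the same route as the paper: apply $\Hom(\ppm(\XX),-)$ and $\Hom(P[m],-)$ to the canonical truncation triangle of $\ppm(\YY)$ to pass to $\YY$, then combine the Nakayama duality~\eqref{eq:Nakayama} with the truncation triangle~\eqref{eq:ARdef} for $\nu\ppm(\XX)[-1]$. The only hiccup is an off-by-one in your shift of~\eqref{eq:ARdef} (correctly shifted, the complementary pieces lie in $\D^{[i,i]}$ and $\D^{[i+1,i+1]}$, hence in $\D^{\geq 1}$ for every $i\geq 1$), so the case $i=1$ needs no separate treatment; the paper handles all $i\geq 1$ uniformly by applying $\Hom(\YY[i-1],-)$ to the rotated triangle $H^0(\nu\ppm(\XX))[-2]\to\AR{m}(\XX)\to\nu\ppm(\XX)[-1]\to H^0(\nu\ppm(\XX))[-1]$, and your eventual vanishing arguments for $i=1$ coincide with this.
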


\begin{proof}
    Denote $\PP=\ppm(\XX)$ and $\QQ=\ppm(\YY)$. Then $\XX\cong H^{[-(m-1),0]}(\PP)$ and $\YY\cong H^{[-(m-1),0]}(\QQ)$. By canonical truncation of $\QQ$, there is a triangle
    \[H^{-m}(\QQ)[m]\to\QQ\to\YY\to H^{-m}(\QQ)[m+1].\]
    Let $\RR$ be any one of $\PP$ and $P[m]$. Applying $\Hom(\RR,-)$ to this triangle, we get an exact sequence for any $i\geq 1$
    \[\begin{array}{rl}
         & \Hom(\RR,H^{-m}(\QQ)[m+i])\to\Hom(\RR,\QQ[i]) \to  \Hom(\RR,\YY[i])\\
         \to & \Hom(\RR,H^{-m}(\QQ)[m+i+1]),
    \end{array}\]
    where the first item and the last item are zero because $\RR$ is a complex of projective modules concentrated in degrees $\geq -m$ and $H^{-m}(\QQ)[m+i],H^{-m}(\QQ)[m+i+1]\in\D^{\leq -(m+1)}$. So we get isomorphisms
    $$\Hom(\RR,\QQ[i]) 
       \cong \Hom(\RR,\YY[i]),$$
    which implies the first isomorphism in \eqref{iso1} and the isomorphism in \eqref{iso2}.
    
    To show the second isomorphism in \eqref{iso1}, by the duality~\eqref{eq:Nakayama}, we have isomorphisms
    \begin{equation}\label{eq:lin541}
    \Hom(\PP,\YY[i]) \cong D\Hom(\YY[i],\nu\PP) \cong D\Hom(\YY[i-1],\nu\PP[-1]).
    \end{equation}
    Rotating the triangle~\eqref{eq:ARdef} and replacing $\ZZ$ with $\XX$, there is a triangle
    \[H^0(\nu\PP)[-2]\to\AR{m}(\XX)\to\nu\PP[-1]\to H^0(\nu\PP)[-1].\]
    Applying $\Hom(\YY[i-1],-)$ to this triangle, we get an exact sequence
    \[\begin{array}{rl}
         & \Hom(\YY[i-1],H^0(\nu\PP)[-2])\to\Hom(\YY[i-1],\AR{m}(\XX)) \\
        \to  & \Hom(\YY[i-1],\nu\PP[-1])\to\Hom(\YY[i-1],H^0(\nu\PP)[-1]),
    \end{array}
    \]
    where the first item and the last item are zero due to that $\YY[i-1]\in\D^{\leq 0}$ and $H^0(\nu\PP)[-2],H^0(\nu\PP)[-1]\in\D^{\geq 1}$. Therefore, we get isomorphisms
    \begin{equation}\label{eq:lin542}
        \Hom(\YY[i-1],\AR{m}(\XX))\cong \Hom(\YY[i-1],\nu\PP[-1]).
    \end{equation}
    Combining \eqref{eq:lin541} and \eqref{eq:lin542}, we get the second isomorphism in \eqref{iso1}.
\end{proof}

Recall that a complex $\PP$ in $K^b(\proj A)$ is called presilting if $\Hom(\PP,\PP[i])=0$ for any $i>0$.

\begin{proposition}\label{prop:pre=taup}
    There is a bijection from the set of (isoclasses of) basic positive $\AR{m}$-rigid pairs in $\emod{m}{A}$ to the set of (isoclasses of) basic $(m+1)$-term presilting complexes in $K^b(\proj A)$, sending $(\XX,P)$ to $\ppm(\XX)\oplus P[m]$.
\end{proposition}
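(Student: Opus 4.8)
The plan is to derive the bijection from the equivalence $\overline{F}$ already established in the proof of \Cref{prop:ARfun}, and then to restrict it to presilting complexes by means of \Cref{lem:pre=tau}. Throughout I identify an $(m+1)$-term complex in $K^b(\proj A)$ with an object of $K^{[-m,0]}(\proj A)$, the category of complexes of projectives concentrated in degrees $[-m,0]$, and I recall that $\overline{F}\colon K^{[-m,0]}(\proj A)/\add A[m]\xrightarrow{\simeq}\emod{m}{A}$, $\PP\mapsto H^{[-(m-1),0]}(\PP)$, has quasi-inverse $\XX\mapsto\ppm(\XX)$.

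The first step is to promote this to a bijection on isomorphism classes between pairs $(\XX,P)$, with $\XX\in\emod{m}{A}$ and $P\in\proj A$, and objects of $K^{[-m,0]}(\proj A)$, given by $(\XX,P)\mapsto\ppm(\XX)\oplus P[m]$. I would verify: (i) $\ppm(\XX)=\sigma_{\geq -m}(\pp\XX)$ has no indecomposable direct summand in $\add A[m]$ --- indeed, $H^{-m}(\XX)=0$ forces $\ker d^{-m}_{\pp\XX}=\im d^{-m-1}_{\pp\XX}$, which is covered projectively by $(\pp\XX)^{-m-1}$ with superfluous kernel, so no summand of $(\pp\XX)^{-m}$ can lie in $\ker d^{-m}_{\pp\XX}$; (ii) by Krull--Schmidt every object of $K^{[-m,0]}(\proj A)$ splits uniquely as $\QQ_0\oplus P[m]$ with $\QQ_0$ having no summand in $\add A[m]$; and (iii) two objects with no summand in $\add A[m]$ that become isomorphic in $K^{[-m,0]}(\proj A)/\add A[m]$ are already isomorphic in $K^{[-m,0]}(\proj A)$, since any self-map factoring through $\add A[m]$ has all its matrix entries in the radical of the endomorphism ring. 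Combining (i)--(iii) with the equivalence $\overline{F}$ yields the desired bijection, and since $\ppm$ is additive and $H^{[-(m-1),0]}$ recovers $\XX$ from $\ppm(\XX)$, it restricts to basic objects on both sides: $\ppm(\XX)\oplus P[m]$ is basic exactly when both $\XX$ and $P$ are.

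The second step is to check that, along this bijection, $\QQ=\ppm(\XX)\oplus P[m]$ is presilting if and only if $(\XX,P)$ is a positive $\AR{m}$-rigid pair, by expanding $\Hom(\QQ,\QQ[i])$ for $i\geq 1$ into four summands. The summand $\Hom(\ppm(\XX),P[m+i])$ vanishes because $\ppm(\XX)$ lives in degrees $\geq -m$ while $P[m+i]$ is concentrated in degree $\leq -m-1$; the summand $\Hom(P[m],P[m+i])=\Ext_A^i(P,P)$ vanishes since $P$ is projective. For $\Hom(P[m],\ppm(\XX)[i])$ I would invoke \eqref{iso2}, identifying it with $\Hom(P,\XX[i-m])=\Hom_A(P,H^{i-m}(\XX))$; this is automatically zero for $i>m$ and equals $\Hom(P,\XX[i-m])$ for $1\leq i\leq m$, so its vanishing for all $i\geq 1$ is precisely condition~\eqref{eq:ptrp} of \Cref{def:taup}. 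For $\Hom(\ppm(\XX),\ppm(\XX)[i])$ I would invoke \eqref{iso1} with $\YY=\XX$, identifying it with $D\Hom(\XX,\AR{m}(\XX)[1-i])$; its vanishing for all $i\geq 1$ amounts to $\AR{m}$-rigidity (the case $i=1$) together with $\mathbb{E}^j(\XX,\AR{m}(\XX))=0$ for $j<0$, i.e.\ positive $\AR{m}$-rigidity. Assembling the four contributions gives the equivalence.

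I expect the main obstacle to be the first step: correctly passing from the equivalence of \emph{quotient} categories in \Cref{prop:ARfun} to a genuine bijection at the level of $K^{[-m,0]}(\proj A)$, i.e.\ matching $\add A[m]$-free complexes with objects of $\emod{m}{A}$ while tracking the $\add A[m]$-part and checking compatibility with basicness. A more self-contained but more laborious alternative would be to show directly that a complex in $K^{[-m,0]}(\proj A)$ with no contractible summand is isomorphic to $\ppm(H^{[-(m-1),0]}(\PP))\oplus P[m]$, by successively lifting the projective covers occurring in a minimal projective presentation of $H^{[-(m-1),0]}(\PP)$ through $\PP$ and splitting off the redundant projective summands. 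The second step, by contrast, is routine once \Cref{lem:pre=tau} is in hand.
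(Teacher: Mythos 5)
Your proposal is correct and follows essentially the same route as the paper: the paper also splits an $(m+1)$-term complex as $\PP'\oplus P[m]$ with $\PP'$ free of shifted projective summands, identifies $\PP'$ with a minimal projective presentation of $H^{[-(m-1),0]}(\PP)$ (the content of your step via $\overline{F}$ and checks (i)--(iii), which the paper leaves implicit), and then transfers the presilting condition through \Cref{lem:pre=tau} exactly as in your four-summand computation. Your write-up merely makes explicit the Krull--Schmidt and minimality details that the paper states without proof.
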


\begin{proof}
    Let $\PP$ be an $(m+1)$-term complex in $K^b(\proj A)$. Write $\PP=\PP'\oplus P[m]$, where $P\in\proj A$ and $\PP'$ does not have $P'[m]$ as a direct summand for any nonzero $P'\in\proj A$. Then $\PP'$ is a minimal projective presentation of the object $H^{[-(m-1),0]}(\PP)$ in $\emod{m}{A}$. Hence, the map sending $(\XX,P)$ to $\ppm(\XX)\oplus P[m]$ is a bijection, with inverse sending $\PP=\PP'\oplus P[m]$ to $(H^{[-(m-1),0]}(\PP),P)$, from the set of (isoclasses of) basic pairs $(\XX,P)$ with $\XX\in\emod{m}{A}$ and $P\in\proj A$ to the set of (isoclasses of) basic $(m+1)$-term complexes in $K^b(\proj A)$. Thus, to complete the proof, we only need to show $(\XX,P)$ is positive $\AR{m}$-rigid if and only if $\ppm(\XX)\oplus P[m]$ is presilting. However, this follows directly from \Cref{lem:pre=tau}.
\end{proof}

Now we are ready to prove \Cref{thm:bi}.

\begin{proof}[Proof of \Cref{thm:bi}]
    To show $\psi$ is a bijection, by \Cref{prop:pre=taup}, we only need to show that for any positive $\AR{m}$-rigid pair $(\XX,P)$, it is $\AR{m}$-tilting if and only if the corresponding presilting complex $\ppm(\XX)\oplus P[m]$ is silting. Denote $\PP=\ppm(\XX)\oplus P[m]$. Then $H^{[-(m-1),0]}(\PP)=\XX$. Recall from \eqref{eq:tpp1} the definition of $\T(\PP)$. We claim 
    $${}^{\bot_{\leq 0}}(\AR{m}(\XX))\cap P^{\bot_{\leq 0}}=\T(\PP).$$
    Indeed, by the isomorphisms in \eqref{iso1}, we have ${}^{\bot_{\leq 0}}(\AR{m}(\XX))=\T(\ppm(X))$. Note that for any $\YY\in\emod{m}{A}$, $\Hom(P[m],\YY[i])=0$ for any $i\geq 1$ if and only if $\Hom(P[m],\YY[i])=0$ for any $1\leq i\leq m$ if and only if $\Hom(P,\YY[i])=0$ for any $i\leq 0$. So we have $P^{\bot_{\leq 0}}=\T(P[m])$. Thus, we have ${}^{\bot_{\leq 0}}(\AR{m}(\XX))\cap P^{\bot_{\leq 0}}=\T(\ppm(X))\cap\T(P[m])=\T(\PP)$.
    So we only need to show that $\PP$ is silting if and only if $\T(\PP)\subseteq\Fac_{m}(H^{[-(m-1),0]}(\PP))$. The ``only if" part follows directly from \Cref{prop:genTP}. For the ``if" part, for any object $\MM\in\T(\PP)\subseteq\Fac_{m}(H^{[-(m-1),0]}(\PP))$, by definition of $m$-factors, there is a triangle
    $$\ZZ\to\TT_0\to\MM\to\ZZ[1],$$
    with $\TT_0\in\add H^{[-(m-1),0]}(\PP)$ and $\ZZ\in \emod{m}{A}$. So by \Cref{new}, $\PP$ is silting. 
    Thus, $\psi$ is a bijection.

    By the bijection in \Cref{rmk:G} and using \Cref{prop:genTP}, $\chi$ is a bijection. Let $\varphi=\chi\circ\psi$ and $(\T,\F)=\varphi(\XX,P)$. Then $$\T=\chi(\ppm(\XX)\oplus P[m])=\Fac_m(H^{[-(m-1),0]}(\ppm(\XX)\oplus P[m]))=\Fac_m(\XX).$$
    By the isomorphisms~\eqref{eq:iso} in \Cref{lem:bz1-2.7}, we have $$\F=\F(\ppm(\XX)\oplus P[m])=\{\YY\in\emod{m}{A}\mid\Hom(\XX,\YY[j])=0,\ j\leq 0\}=\XX^{\bot_{\leq0}}.$$ Thus, the bijection $\varphi$ has the form shown in the theorem.
\end{proof}

\begin{corollary}
    Let $\XX$ be a $\AR{m}$-tilting object in $\emod{m}{A}$. Then $$(\Fac_{m}(\XX),\Sub_{m}(\AR{m}(\XX)))$$ is a functorially finite $s$-torsion pair in $\emod{m}{A}$.
\end{corollary}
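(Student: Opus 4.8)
The plan is to exhibit $\XX$ as the first component of a basic $\AR{m}$-tilting pair and then to quote the bijection $\varphi$ of \Cref{thm:bi}. First I would reduce to the case where $\XX$ is basic: the subcategories $\Fac_m(\XX)$ and $\Sub_m(\AR{m}(\XX))$ depend only on $\add\XX$ and $\add\AR{m}(\XX)$, respectively, and since $\AR{m}$ is additive and kills projective objects, $\add\AR{m}(\XX)$ is unchanged when $\XX$ is replaced by the basic object with the same additive closure; the property of being $\AR{m}$-tilting is insensitive to this replacement as well. So I may assume $\XX$ is basic.

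Next I would check that the pair $(\XX,0)$, with $P=0\in\proj A$, is a basic $\AR{m}$-tilting pair in the sense of \Cref{def:taup} and \Cref{def:tt}. Positive $\AR{m}$-rigidity of $\XX$ is part of the hypothesis; the condition \eqref{eq:ptrp} reads $\Hom(0,\XX[i])=0$ for $i\leq 0$, which is vacuous; and since $0^{\bot_{\leq 0}}=\emod{m}{A}$, the defining equality \eqref{eq:defttp} becomes
$$\Fac_m(\XX)={}^{\bot_{\leq 0}}(\AR{m}(\XX))\cap\emod{m}{A}={}^{\bot_{\leq 0}}(\AR{m}(\XX)),$$
which is exactly \eqref{eq:deftt}, i.e.\ the hypothesis that $\XX$ is $\AR{m}$-tilting. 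The pair $(\XX,0)$ is basic because, $\XX$ being basic, the corresponding $(m+1)$-term complex $\ppm(\XX)\oplus 0[m]=\ppm(\XX)$ is basic; this is the only point requiring a small (routine) bookkeeping argument.

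Finally, applying the bijection $\varphi\colon\tp\to\ft$ of \Cref{thm:bi} to $(\XX,0)$ and using $\nu 0=0$ gives that
$$\varphi(\XX,0)=\bigl(\Fac_m(\XX),\,\Sub_m(\AR{m}(\XX)\oplus\nu 0[m-1])\bigr)=\bigl(\Fac_m(\XX),\,\Sub_m(\AR{m}(\XX))\bigr)$$
is a functorially finite $s$-torsion pair in $\emod{m}{A}$, which is the assertion. I do not expect any genuine obstacle here: the statement is a direct specialisation of \Cref{thm:bi}, and the only care needed is the ``basic'' bookkeeping mentioned above. (An alternative, avoiding the $\tau$-tilting correspondence, would be to verify directly via the recursion \eqref{eq:rec} and \Cref{prop:as1} that $\Fac_m(\XX)$ is a torsion class closed under $m$-factors with torsionfree class $\Sub_m(\AR{m}(\XX))$, together with functorial finiteness from \Cref{prop:tpp1}; but this is longer, and routing through \Cref{thm:bi} is cleanest.)
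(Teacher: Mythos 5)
Your proposal is correct and follows essentially the same route as the paper, which simply observes that $(\XX,0)$ is an $\AR{m}$-tilting pair and applies the bijection $\varphi$ of \Cref{thm:bi}. The extra care you take about reducing to the basic case is harmless bookkeeping that the paper leaves implicit.
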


\begin{proof}
    By definition, $(\XX,0)$ is a $\AR{m}$-tilting pair in $\emod{m}{A}$. Then by \Cref{thm:bi}, there is a corresponding functorially finite $s$-torsion pair in $\emod{m}{A}$
    $$(\Fac_{m}(\XX),\Sub_{m}(H^{[-(m-1),0]}(\nu\ppm(\XX)[-1])).$$
    However, by \Cref{rmk:AR}, $H^{[-(m-1),0]}(\nu\ppm(\XX)[-1])=\AR{m}(\XX)$.
\end{proof}

For any $\XX\in\emod{m}{A}$, we denote by $|\XX|$ the number of non-isomorphic indecomposable direct summands of $\XX$.

\begin{corollary}\label{num}
    Let $(\XX,P)$ be a $\AR{m}$-tilting pair in $\emod{m}{A}$. Then $|\XX|+|P|=|A|$.
\end{corollary}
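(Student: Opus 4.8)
The plan is to deduce the equality from the bijection $\psi$ in \Cref{thm:bi}, which sends the $\AR{m}$-tilting pair $(\XX,P)$ to the basic $(m+1)$-term silting complex $\PP=\ppm(\XX)\oplus P[m]$ in $K^b(\proj A)$. Thus it suffices to establish the two identities $|\XX|+|P|=|\PP|$ and $|\PP|=|A|$.

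For the first identity, I would use the equivalence of Krull--Schmidt categories $\overline{F}\colon K^{[-m,0]}(\proj A)/\add A[m]\xrightarrow{\simeq}\emod{m}{A}$ constructed in the proof of \Cref{prop:ARfun}, whose quasi-inverse sends $\XX$ to $\ppm(\XX)$. The indecomposable objects of the quotient $K^{[-m,0]}(\proj A)/\add A[m]$ are exactly the indecomposables of $K^{[-m,0]}(\proj A)$ that are not of the form $P'[m]$ with $P'\in\proj A$; since $\ppm(\XX)$ is, by construction, a complex with no direct summand isomorphic to such a shift (cf.\ the proof of \Cref{prop:pre=taup}), its indecomposable decomposition in $K^{[-m,0]}(\proj A)$ is carried by $\overline{F}$ onto that of $\XX$, so $|\ppm(\XX)|=|\XX|$. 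As $P[m]$ is a direct sum of copies of shifts $P'[m]$ while $\ppm(\XX)$ has no such summand, the two halves of $\PP$ share no indecomposable summand, whence $|\PP|=|\ppm(\XX)|+|P[m]|=|\XX|+|P|$.

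For the second identity, I would invoke the standard fact that every basic silting complex in $K^b(\proj A)$ has precisely $|A|$ indecomposable direct summands: the classes of the indecomposable summands of a silting complex form a $\mathbb{Z}$-basis of the Grothendieck group $K_0(K^b(\proj A))\cong K_0(\proj A)$, which is free of rank $|A|$ with basis the classes of the indecomposable projective $A$-modules. Applying this to $\PP$ gives $|\PP|=|A|$, and combining the two identities yields $|\XX|+|P|=|A|$.

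The argument is largely bookkeeping; the only points deserving care are the equality $|\ppm(\XX)|=|\XX|$, which is delivered by the equivalence already recorded in \Cref{prop:ARfun} (together with the minimality built into $\ppm(-)$), and the rank statement for silting complexes, which is classical and may simply be cited.
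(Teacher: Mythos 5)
Your proposal is correct and follows essentially the same route as the paper: the paper also reduces to $|\ppm(\XX)|=|\XX|$ (via minimality of the projective presentation) and then cites the result of Aihara--Iyama that a basic silting complex in $K^b(\proj A)$ has exactly $|A|$ indecomposable summands. Your extra justification of $|\ppm(\XX)|=|\XX|$ via the equivalence $\overline{F}$, and the observation that $\ppm(\XX)$ and $P[m]$ share no summands, only makes explicit what the paper leaves implicit.
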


\begin{proof}
    Since $\ppm(\XX)$ is a minimal projective presentation of $\XX\in\emod{m}{A}$, we have $|\ppm(\XX)|=|\XX|$. So $|\XX|+|P|=|\ppm(\XX)|+|P[m]|=|\ppm(\XX)\oplus P[m]|$. By \Cref{thm:bi}, $\ppm(\XX)\oplus P[m]$ is a silting complex. Then by \cite[Corollary~2.28]{AI}, $|\ppm(\XX)\oplus P[m]|=|A|$. Thus, we get the required equality.
\end{proof}

The following result tells us that for a $\AR{m}$-tilting pair, its right part is determined by its left part.

\begin{corollary}\label{unique}
    Let $(\XX,P)$ be a $\AR{m}$-tilting pair in $\emod{m}{A}$. Then for any $Q\in\proj A$, if $(\XX,Q)$ is a positive $\AR{m}$-rigid pair, then $Q\in\add P$.
\end{corollary}

\begin{proof}
    By definition, $(\XX,P\oplus Q)$ is a positive $\AR{m}$-rigid pair. Since $(\XX,P)$ is a $\AR{m}$-tilting pair, we have ${}^{\bot_{\leq 0}}(\AR{m}(\XX))\cap (P\oplus Q)^{\bot_{\leq 0}}\subseteq{}^{\bot_{\leq 0}}(\AR{m}(\XX))\cap P^{\bot_{\leq 0}}\subseteq\Fac_{m}(\XX)$. So $(\XX,P\oplus Q)$ is also a $\AR{m}$-tilting pair. Thus, by \Cref{num}, we have $|P\oplus Q|=|A|-|\XX|=|P|$, which implies $Q\in\add P$.
\end{proof}

By \Cref{rmk:G} and \Cref{prop:tpp1}, for any functorially finite $s$-torsion pair $(\T,\F)$ in $\emod{m}{A}$, both $\T$ and $\F$ have enough projective objects and enough injective objects.

\begin{corollary}\label{cor:num}
    Let $(\T,\F)$ be a functorially finite $s$-torsion pair in $\emod{m}{A}$. Then the number of projective objects in $\T$ (resp. $\F$) is the same as the number of injective objects in $\T$ (resp. $\F$).
\end{corollary}

\begin{proof}
    We only show the assertion for $\T$ since the assertion for $\F$ can be shown similarly. By \Cref{thm:bi}, there is an $(m+1)$-term silting complex $\PP$ in $K^b(\proj A)$ and a $\AR{m}$-tilting pair $(\XX,P)$ in $\emod{m}{A}$ such that $\T=\T(\PP)=\Fac_m\XX$ and $\XX=H^{[-(m-1),0]}(\PP)$. By \Cref{prop:tpp1}, the number of non-isomorphic indecomposable projective objects in $\T(\PP)$ is $|\XX|$, which by \Cref{num} equals $|A|-|P|$. Also by \Cref{prop:tpp1}, the number of non-isomorphic indecomposable injective objects in $\T(\PP)$ is $|\mathfrak{t}(\nu A[m-1])|$.
    
    We claim that for any indecomposable direct summand $Q$ of $A$, the pair $(\XX,Q)$ is positive $\AR{m}$-rigid if and only if $\nu Q[m-1]\in\F(\PP)$. By the duality~\eqref{eq:Nakayama}, we have isomorphisms
    $$\Hom(Q[m-1],\XX[i])\cong D\Hom(\XX,(\nu Q[m-1])[-i]),\ i\geq 0.$$
    Since $\Hom(Q[m-1],\XX[i])=0$ for any $i<0$, $(\XX,Q)$ is positive $\AR{m}$-rigid if and only if $\Hom(Q[m-1],\XX[i])=0$ for any $i\geq 0$ if and only if $\Hom(\XX,(\nu Q[m-1])[i])=0$ for any $i\leq 0$ if and only if $\nu Q[m-1]\in\XX^{\bot_{\leq 0}}=\F(\PP)$. Thus, the claim is proved.
    
    So, by \Cref{unique}, $Q\in\add P$ if and only if $\mathfrak{t}(\nu Q[m-1])=0$. Thus, to complete the proof, it suffices to show that if $Q\notin\add P$, then $\mathfrak{t}(\nu Q[m-1])$ is indecomposable. Consider the canonical triangle of $\nu Q[m-1]$ with respect to $(\T,\F)$
    \begin{equation}\label{new1}
        \mathfrak{t}(\nu Q[m-1])\xrightarrow{f} \nu Q[m-1]\to \mathfrak{f}(\nu Q[m-1])\to \mathfrak{t}(\nu Q[m-1])[1].
    \end{equation}
    Since $\nu Q[m-1]$ is injective in $\emod{m}{A}$, the morphism $f$ is a left $\add \nu A[m-1]$-approximation of $\mathfrak{t}(\nu Q[m-1])$. By \Cref{tf}, $f$ is a right minimal $\T$-approximation of $\nu Q[m-1]$. In particular, $f\neq 0$. This, together with $\mathfrak{t}(\nu Q[m-1])\neq 0$ and $\nu Q[m-1]$ is indecomposable, shows that $f$ is left minimal. Suppose, to the contrary, that $\mathfrak{t}(\nu Q[m-1])=\XX_1\oplus\XX_2$ with nonzero objects $\XX_1$ and $\XX_2$. Write $f=\begin{pmatrix}f_1&f_2\end{pmatrix}$ with $f_i:\XX_i\to \nu Q[m-1]$, $i=1,2$. Since $f$ is right minimal, both $f_1$ and $f_2$ are not zero. So $f_1$ and $f_2$ are left minimal, since $\nu Q[m-1]$ is indecomposable. Let $g=\begin{pmatrix}f_1&0\end{pmatrix}:\mathfrak{t}(\nu Q[m-1])=\XX_1\oplus\XX_2\to \nu Q[m-1]$. Since $f$ is a left $\add\nu A[m-1]$-approximation of $\mathfrak{t}(\nu Q[m-1])$, there is a morphism $h:\nu Q[m-1]\to \nu Q[m-1]$ such that $h\circ f=g$. So we have $h\circ f_1=f_1$ and $h\circ f_2=0$. Since $f_1$ is left minimal, $h$ is an isomorphism. Thus, we have $f_2=0$, a contradiction. Hence, $\mathfrak{t}(\nu Q[m-1])$ is indecomposable.
\end{proof}

In the usual case (i.e., $m=1$), for any positive $\AR{m}$-rigid pair $(\XX,P)$ in $\emod{m}{A}$, the following hold.
\begin{enumerate}
    \item The pair $(\Fac_m(\XX),\Fac_m(\XX)^\bot)$ is a functorially finite $s$-torsion pair in $\emod{m}{A}$ \cite[Theorem~5.10]{AS}.
    \item There exist $\YY\in\emod{m}{A}$ and $Q\in\proj A$ such that $(\XX\oplus\YY,P\oplus Q)$ is $\AR{m}$-tilting \cite[Proposition~2.3~(a) and Theorem~2.10]{AIR}.
    \item If $(\XX,P)$ is maximal positive $\AR{m}$-rigid in the sense that any indecomposable positive $\AR{m}$-rigid pair $(\YY,Q)$ such that $(\XX\oplus\YY,P\oplus Q)$ is positive $\AR{m}$-rigid is isomorphic to a direct summand of $(\XX,P)$, then $(\XX,P)$ is $\tau$-tilting \cite[Corollary~2.13]{AIR}.
\end{enumerate}
However, these three are not true in general.

\begin{example}
    Let $A=\k Q/I$, where $Q$ is the quiver 
    $$\xymatrix{1 \ar@/^0.5pc/[r]^{x_1} \ar@/_0.5pc/[r]_{y_1} & 2 \ar@/^0.5pc/[r]^{x_2} \ar@/_0.5pc/[r]_{y_2} & 3}$$ and $I=\langle x_1x_2, y_1y_2\rangle$. Let $M$ be the representation
    $$\xymatrix{\k \ar@/^0.5pc/[r]^{1} \ar@/_0.5pc/[r]_{0} & \k \ar@/^0.5pc/[r]^{0} \ar@/_0.5pc/[r]_{1} & \k }.$$ 
    Consider the object $\XX=(0\to M)$ in $\emod{2}{A}$. Its minimal projective presentation 
    $$\pp_2(\XX)=(P_3\xrightarrow{y_2}P_2\xrightarrow{y_1}P_1)$$
    is shown in \cite{LZ} to be indecomposable but not to be a proper direct summand of a presilting complex in $K^b(\proj A)$. Thus, by the bijection $\psi$ in \Cref{thm:bi}, $(\XX,0)$ is maximal positive $\AR{2}$-rigid and there is no positive $\AR{2}$-rigid pair $(\YY,Q)$ such that $(\XX,P)\oplus(\YY,Q)$ is $\AR{2}$-tilting.
    
    We shall show that the pair $(\Fac_2(\XX),\Fac_2(\XX)^\bot)$ is not a functorially finite $s$-torsion pair. Suppose, to the contrary, that it is. Then by \Cref{thm:bi}, there is a $\AR{2}$-tilting pair $(\XX',P)$ such that $\Fac_2(\XX)=\Fac_2(\XX')$. For any $\YY\in\Fac_2(\XX)$, by inclusion~\eqref{eq:inc}, $\YY\in{}^{\bot_{\leq 0}}(\AR{2}(\XX))$. So $\Hom(\YY,\AR{2}(\XX))=0$, which by \Cref{cor:AR}, implies $\mathbb{E}(\XX,\YY)=0$. Hence, $\XX$ is projective in $\Fac_2(\XX)$. So by \Cref{prop:tpp1}, $\XX$ is a direct summand of $\XX'$. Thus, $\pp_2(\XX)$ is a direct summand of $\pp_2(\XX')$. However, by \Cref{thm:bi}, $\pp_2(\XX')\oplus\nu P[2]$ is a silting complex. This is a contradiction.
\end{example}

\appendix

\section{Truncation}\label{sec: trun}

Let $A$ be a finite-dimensional algebra over a field $\k$, and $\mod A$ the category of finitely generated right $A$-modules. In this appendix, we collect some basic constructions about truncating complexes of modules in $\mod A$, mainly for the constructions of the Auslander-Reiten translations $\AR{m}$ and $\AR{m}^-$ in \Cref{sec:AR}.

For any complex $\XX=(X^i,d^i:X^i\to X^{i+1})$ of modules in $\mod A$, there are the following two kinds of truncation. Let $p$ be an integer.

The \emph{stupid truncation} $\sigma_{\leq p}(\XX)$ and $\sigma_{\geq p}(\XX)$ are defined as:
\[\begin{array}{cccccccccccc}
    \sigma_{\leq p}(\XX) & = & (\cdots & \xrightarrow{d^{p-2}} & X^{p-1} & \xrightarrow{d^{p-1}} & X^{p} & \to & 0 & \to & \cdots),\\
    \sigma_{\geq p}(\XX) & = & (\cdots & \to & 0 & \to &  X^{p} & \xrightarrow{d^p} & X^{p+1} & \xrightarrow{d^{p+1}} & \cdots).
\end{array}\]
There is a triangle in $D(\mod A)$:
\[\sigma_{\geq p+1}(\XX)\to\XX\to \sigma_{\leq p}(\XX)\to \sigma_{\geq p+1}(\XX)[1].\]
The \emph{canonical truncation} $\sigma^{\leq p}(\XX)$ and $\sigma^{\geq p}(\XX)$ are defined as
\[\begin{array}{ccccccccccc}
    \sigma^{\leq p}(\XX)=&(\cdots&\xrightarrow{d^{p-2}} & X^{p-1} & \xrightarrow{d^{p-1}} & \ker d^p & \to &  0 & \to & \cdots),\\
    \sigma^{\geq p}(\XX)  = & (\cdots & \to & 0 & \to & \coker d^{p-1} & \xrightarrow{d^p} & X^{p+1} & \xrightarrow{d^{p+1}} &\cdots).
\end{array}\]
There is a triangle in $D(\mod A)$:
\[\sigma^{\leq p}(\XX)\to\XX\to \sigma^{\geq p+1}(\XX)\to \sigma^{\leq p}(\XX)[1].\]

\end{document}